\newtheorem{theorem}{Theorem}[section]
\newtheorem{corollary}{Corollary}
\newtheorem{lemma}[theorem]{Lemma}
\newtheorem{proposition}{Proposition}
\theoremstyle{definition}
\newtheorem{definition}[theorem]{Definition}
\newtheorem*{BA}{Basic Assumption}  %own
\newtheorem{remark}{Remark}
\newtheorem*{exa*}{Example} %own
\newtheorem{theoremm}{Theorem}  %own
\newtheorem{corollaryy}{Corollary}[theoremm]
\newtheorem*{pf}{Proof}
\newtheorem*{potA}{Proof of Theorem \ref{thm:localvsfinite}}
\newtheorem*{potB}{Proof of Theorem \ref{thm:SMB}}
\newtheorem*{potC}{Proof of Theorem \ref{thm:vp}}
\title[Unstable Entropy and Unstable Pressure for RPHDS] %Use the shortened version of the full title
      {Unstable Entropy and Unstable Pressure for Random Partially Hyperbolic Dynamical Systems}
\author[Xinsheng Wang Weisheng Wu and Yujun Zhu]{}
\subjclass{Primary: 37D30, 37D35; Secondary: 37H99.}
 \keywords{Random dynamical system, unstable entropy, unstable pressure, Shannon-McMillan-Breiman Theorem, variational principle.}
 \email{xswang@hebtu.edu.cn}
 \email{wuweisheng@cau.edu.cn}
 \email{yjzhu@xmu.edu.cn}
\thanks{X. Wang and Y. Zhu are supported by  NSFC (No: 11771118, 11801336), W. Wu is supported by  NSFC (No: 11701559). The first author is also supported by the Innovation Fund Designated for Graduate Students of Hebei Province (No: CXZZBS2018101) and China Scholarship Council (CSC)}
\thanks{$^*$ The corresponding author.}
\begin{document}

\maketitle

% Enter the first author's name and address:
\centerline{\scshape Xinsheng Wang$^{1,*}$ Weisheng Wu$^2$ and Yujun Zhu$^{3}$}

\medskip

 {\footnotesize
% please put the address of the first author
 \centerline{1. School of Mathematical Sciences}
   \centerline{ Hebei Normal University, Shijiazhuang, 050024, P.R. China}
   \centerline{2. Department of Applied Mathematics, College of Science}
   \centerline{ China Agricultural University, Beijing, 100083, P.R. China}
   \centerline{3. School of Mathematical Sciences}
   \centerline{ Xiamen University, Xiamen, 361005, P.R. China}
} % Do not forget to end the {\footnotesize by the sign }

\bigskip

%The abstract of your paper
\begin{abstract}
Let $\mathcal{F}$ be a $C^2$ {random partially hyperbolic dynamical system}. For the unstable foliation, the corresponding unstable metric entropy, unstable topological entropy and unstable pressure via the dynamics of $\mathcal{F}$ on the unstable foliation are introduced and investigated. A version of Shannon-McMillan-Breiman Theorem for unstable metric entropy is given, and a variational principle for unstable pressure (and hence for unstable entropy) is obtained. Moreover, as an application of the variational principle, equilibrium states for the unstable pressure including Gibbs $u$-states are investigated.
\end{abstract}

\renewcommand{\sectionmark}[1]{}
\section{Introduction}

In differentiable dynamical systems and smooth ergodic theory, entropies (including metric entropy and topological entropy), pressures and Lyapunov exponents are the main ingredients for both of deterministic and random cases, which describe the complexity of the orbit structure of the system from different points of view.

In the seminal papers \cite{LedrappierYoung1985} and \cite{LedrappierYoung1985a}, the so-called Pesin's entropy formula and dimension formula which relate metric entropy and positive Lyapunov exponents are given for any $C^2$ diffeomorphism $f$ on a closed manifold $M$ with respect to SRB measures and general invariant measures respectively. These formulas tell us that positive exponents have contribution to the metric entropy $h_\mu(f)$, where $\mu$ is an $f$-invariant measure. In another word, $h_\mu(f)$ is determined by the dynamics of $f$ on the unstable foliations since it can be given by $H_\mu(\xi|f\xi)$, where $\xi$ is an increasing partition subordinate to unstable manifolds. An interesting question is: {\em can we introduce an appropriate definition of topological entropy $h^u_{\text{top}}(f)$ via the dynamics of $f$ on the unstable foliations and obtain a version of variational principle relating $h^u_{\text{top}}(f)$ and $h_\mu(f)$?}

Recently, a partial answer to the above question is obtained, and the theory of entropy and pressure along unstable foliations for $C^1$ partially hyperbolic diffeomorphisms is investigated (cf.\cite{HuHuaWu2017}, \cite{Yang2016} and \cite{HuWuZhu2017}). In \cite{HuHuaWu2017},  Hu, Hua and Wu introduce the definitions of unstable metric entropy $h^u_{\mu}(f)$ for  any invariant measure $\mu$ and unstable topological entropy $h_{\text{top}}^u(f)$,  give a version of Shannon-McMillan-Breiman Theorem for $h^u_{\mu}(f)$, and obtain a variational principle relating $h^u_{\mu}(f)$ and $h_{\text{top}}^u(f)$. We point out that in \cite{HuHuaWu2017} $h^u_{\mu}(f)$ is defined via
$H_{\mu}(\bigvee_{i=0}^{n-1}f^{-i}\alpha|\eta)$ (where $\alpha$ is a finite measurable partition and $\eta$ is a measurable
partition which is subordinate to unstable manifolds) instead of  the classical form $H_{\mu}(\xi|f\xi)$ (where $\xi$ is an increasing partition subordinate to the unstable manifolds) in \cite{LedrappierYoung1985a}. The advantage of this type of definition of  $h^u_{\mu}(f)$ is that a variational principle then is obtained and hence the theory of pressure and  related topics in mathematical statistical mechanics can be considered. Actually, lately in \cite{HuWuZhu2017}, Hu, Wu and Zhu generalize the concept of unstable topological entropy to unstable pressure $P^u(f, \varphi)$ for any continuous function $\varphi$ on $M$, obtain a variational principle for unstable pressure, and investigate the properties of the so-called $u$-equilibriums. In fact, we observe that for any $C^1$ diffeomorphism $f$ with uniformly expanding foliations (see the precise definition in \cite{Yang2016}), unstable entropies $h^u_{\mu}(f)$, $h^u_{\text{top}}(f)$ and unstable pressure $P^u(f, \varphi)$ can be defined via the dynamics of $f$ on the unstable foliations, and moreover, a sequence of similar results as in \cite{HuHuaWu2017} and \cite{HuWuZhu2017} can be obtained with an effort.

The main purpose of this paper is to consider this topic for random dynamical systems (RDSs). In \cite{Kifer1986} and \cite{LiuQian1995},  Kifer and Liu  are mainly concerned about  i.i.d. (i.e., independent and identically distributed) RDSs generated by applying at each step a transformation chosen randomly from a given family according to some probability distribution. Specifically, for a $C^2$ i.i.d. RDS $\mathcal{F}$, the ergodic theory, which mainly consists of Pesin theory, of $\mathcal{F}$ has been systematically investigated in \cite{Kifer1986} and \cite{LiuQian1995}. In \cite{Arnold1998}, Arnold considers a more general version of RDSs, where the random choice of transformations is assumed to be only stationary. We are mainly concerned about the general case in this paper. For RDSs, various random versions of Pesin's entropy formula and dimension formula were thoroughly investigated in \cite{LedrappierYoung1988}, \cite{LedrappierYoung1988a}, \cite{LiuQian1995}, \cite{BahnmullerLiu1998}, \cite{QianQianXie2003}, etc., in different settings. In these papers, the metric entropy $h_{\mu}(\mathcal{F})$ is defined by $H_{{\mu}}(\xi|\Theta\xi)$, where $\Theta$ is the induced skew product transformation on $\Omega\times M$ ($\Omega$ is a probability space with probability $\mathbf{P}$), ${\mu}$ is a $\Theta$-invariant measure with marginal measure $\mathbf{P}$ on $\Omega$ and $\xi$ is an increasing partition of $\Omega\times M$ which is subordinate to the random unstable foliations (see Section \ref{sec:Pre} for notations and definitions). In this paper we will adapt the techniques in \cite{HuHuaWu2017} and \cite{HuWuZhu2017} to the random setting and obtain the corresponding results as in \cite{HuHuaWu2017} and \cite{HuWuZhu2017}.

We will firstly introduce the so-called partial hyperbolicity for $\mathcal{F}$ (see Definition \ref{domination}). Then for the unstable foliation, we define two types of unstable metric entropies $\tilde{h}^u_{\mu}(\mathcal{F})$ (via Bowen balls) and ${h}^u_{\mu}(\mathcal{F})$ (via conditional entropy) with respect to any $\mathcal{F}$-invariant measure $\mu$ following the methods in \cite{LedrappierYoung1985a} and \cite{HuHuaWu2017} (see Definition \ref{def:metricentropy1} and Definition \ref{def:metricentropy2}), and show that these two unstable metric entropies coincide with each other when  $\mu$ is ergodic (Theorem \ref{thm:localvsfinite}). Some properties of  unstable metric entropies are given and a version of Shannon-McMillan-Breiman Theorem, in which the unstable metric entropy is expressed as the limit of certain conditional information functions, is obtained (Theorem \ref{thm:SMB}). The next work is, for the unstable foliation, to define unstable pressure $P^u(\mathcal{F}, \phi)$  for each function $\phi$ on $\Omega\times M$ which is continuous in $x\in M$ and measurable in $\omega\in\Omega$ via the dynamics of $\mathcal{F}$ on local unstable manifolds (see Definition \ref{def:upressure}). Then a version of variational principle for $P^u(\mathcal{F}, \phi)$, which states that $P^u(\mathcal{F}, \phi)$ is the supremum of the sum of the unstable metric entropy and the integral of $\phi$ taken over all invariant measures of $\mathcal{F}$, is obtained (Theorem \ref{thm:vp}). Since $P^u(\mathcal{F}, 0)=h^u_{\text{top}}(\mathcal{F})$ (the unstable topological entropy of $\mathcal{F}$), we get a variational principle for  $h^u_{\text{top}}(\mathcal{F})$ directly (Corollary \ref{coro:vp1}).

This paper is organized as follows. In Section \ref{sec:Pre}, we give some preliminaries and state our main results. In Section \ref{sec:Meaentropy}, we give the definition of unstable metric entropy via two methods and obtain the equivalence of them under an ergodicity condition. In Section \ref{sec:SMBThm}, a version of Shannon-McMillan-Breiman Theorem for unstable metric entropy is given. In Section \ref{sec:Pressure}, we give the definition of unstable pressure. In Section \ref{sec:VP}, a variational principle for unstable pressure is obtained. In the last section, i.e. Section \ref{sec:equilibrium}, as an application of the variational principle, we discuss the so-called $u$-equilibrium states. Particularly, the so-called Gibbs $u$-state are introduced.

\section{Preliminaries and Statement of Main Results}\label{sec:Pre}

Throughout this paper, we let $M$ be a $C^\infty$ compact Riemannian manifold without boundary. Denote by $\mathscr{B}(M)$ the Borel $\sigma$-algebra of $M$. Let $(\Omega,\mathscr{F},\mathbf{P})$ be a Polish probability space and $\theta$ be an invertible and ergodic measure-preserving transformation on $\Omega$.

\begin{definition}
A {\em $C^2$ random dynamical system} $\mathcal{F}$ on $M$ over $(\Omega,\mathscr{F},\mathbf{P},\theta)$ is defined as a map
\begin{align*}
  \mathcal{F}\colon\mathbb{Z}\times\Omega\times M &\to M  \\
  (n,\omega,x) &\mapsto \mathcal{F}(n,\omega)x,
\end{align*}
which has the following properties:
\begin{enumerate}[label=(\roman*)]
  \item $\mathcal{F}$ is measurable;
  \item the maps $\mathcal{F}(n,\omega)\colon M\to M$ form a cocycle over $\theta$, i.e. they satisfy
  \begin{align*}
    \mathcal{F}(0,\omega) &=\mathrm{id}, \\
    \mathcal{F}(n+m,\omega) &=\mathcal{F}(m,\theta^n\omega)\circ\mathcal{F}(n,\omega),
  \end{align*}
  for all $n,m\in\mathbb{Z}$ and $\omega\in\Omega$;
  \item the maps $\mathcal{F}(n,\omega)\colon M\to M$ are $C^2$ for all $n\in\mathbb{Z}$ and $\omega\in\Omega$.
\end{enumerate}
\end{definition}

For each $\omega\in\Omega$, we define
\[
  f^n_\omega:=
  \begin{cases}
    \mathcal{F}(1,\theta^{n-1}\omega)\circ\cdots\circ \mathcal{F}(1,\omega) &\text{if }n>0,\\
    \mathrm{id} &\text{if }n=0, \\
    \mathcal{F}(1,\theta^n\omega)^{-1}\circ\cdots\circ \mathcal{F}(1,\theta^{-1}\omega)^{-1} &\text{if }n<0.
  \end{cases}
\]

  Associated with $\Omega\times M$, there is a skew product $\Theta$ induced by $\mathcal{F}$, i.e.,
  \begin{align*}
    \Theta\colon\Omega\times M &\to\Omega\times M \\
    (\omega,x) &\mapsto(\theta\omega,\mathcal{F}(1,\omega)x).
  \end{align*}

\begin{definition}[Invariant measure]
  A measure $\mu$ on $\Omega\times M$ is said to be an {\em$\mathcal{F}$-invariant measure}, if it is $\Theta$-invariant and has marginal measure $\mathbf{P}$ on $\Omega$. In particular, an $\mathcal{F}$-invariant measure $\mu$ is said to be {\em ergodic}, if it is ergodic with respect to $\Theta$.
\end{definition}
 We denote by $\mathcal{M}_{\mathbf{P}}(\mathcal{F})$ the set of all $\mathcal{F}$-invariant measures.

In the following part of this paper, for $\mu\in\mathcal{M}_{\mathbf{P}}(\mathcal{F})$ we always consider the ${\mu}$-completion of $\mathscr{F}\times\mathscr{B}(M)$, which is still denoted by $\mathscr{F}\times\mathscr{B}(M)$ for simplicity.

According to Rokhlin's paper \cite{Rokhlin1962} and Liu and Qian's monograph\cite{LiuQian1995}, for each $\mu\in\mathcal{M}_{\mathbf{P}}(\mathcal{F})$, there exists a family of sample measures ${\mu}_\cdot(\cdot)\colon\Omega\times\mathscr{B}(M)\to [0,1]$ of ${\mu}$ satisfying the following properties:
\begin{enumerate}[label=(\roman*)]
  \item for all $B\in\mathscr{B}(M)$, $\omega\mapsto{\mu}_\omega(B)$ is $\mathscr{F}$-measurable;
  \item for $\mathbf{P}$-a.e. $\omega\in\Omega$, ${\mu}_\omega\colon\mathscr{B}(M)\to[0,1]$ is a probability measure on $M$;
  \item for $A\in\mathscr{F}\times\mathscr{B}(M)$,
  \[
    {\mu}(A)=\int_\Omega\int_{M}\mathbf{1}_A(\omega,x)\mathrm{d}{\mu}_\omega(x)\mathrm{d}{\mathbf{P}}(\omega),
  \]
  where $\mathbf{1}_A$ is the characteristic function of $A\subset\Omega\times M$.
\end{enumerate}
\begin{remark}
  For $\mu\in\mathcal{M}_{\mathbf{P}}(\mathcal{F})$, it is clear that $f_\omega^n(\omega){\mu}_\omega={\mu}_{\theta^n\omega}$ for all $n$ $\in\mathbb{Z}$ and $\mathbf{P}$-a.e. $\omega\in\Omega$.
\end{remark}

Throughout this paper, we always assume that the Probability $\mathbf{P}$ on $\Omega$ satisfies
\begin{equation}\label{eq:basic}
  \int_\Omega(\log^+|\mathcal{F}(1,\omega)|_{C^2}+\log^+|\mathcal{F}(-1,\omega)|_{C^2})\mathrm{d}\mathbf{P}(\omega)<\infty,
\end{equation}
where $|f|_{C^2}$ denotes the usual $C^2$ norm of $f\in\mathrm{Diff}^2(M)$, and $\log^+a=\max\{\log a, 0\}$.

Similar to the deterministic case, we can define the Lyapunov exponents for $\mathcal{F}$. Let $\Lambda$ be the set of all regular points $(\omega,x)\in\Omega\times M$ in the sense of Oseledec. For $(\omega,x)\in\Lambda$, Let $\lambda_1(\omega,x)>\cdots>\lambda_{r(\omega,x)}(\omega,x)$ be its distinct Lyapunov exponents of $\mathcal{F}$ with multiplicities $m_j(\omega,x)$ ($1\leq j\leq r(\omega,x)$).

Let $\mu\in\mathcal{M}_{\mathbf{P}}(\mathcal{F})$, and denote by $\Vert\cdot\Vert$ the norm of vectors in the tangent space {of $M$}. By the Oseledec Multiplicative Ergodic Theorem {(refer to Chapter 4 in \cite{Arnold1998} for more details)}, we know that $\Lambda$ is $\Theta$-invariant and ${\mu}(\Lambda)=1$. For each $(\omega,x)\in\Lambda$, there is a splitting of $T_xM$ as follows
\[
  T_xM=E_1(\omega,x)\oplus\cdots\oplus E_{r({\omega,}x)}(\omega,x)
\]
such that for $i=1,\dots,r({\omega,}x)$, $\mathrm{dim}E_i(\omega,x)=m_i({\omega,}x)$ and
\[
  \lim_{n\to\pm\infty}\frac{1}{n}\log\Vert \mathrm{D}_xf^n_\omega v\Vert=\lambda_i({\omega,}x),\quad\text{for all } v\in E_i(\omega,x)\setminus\{0\}.
\]
{(By \eqref{eq:basic} we can choose $\Lambda$ such that all the above Lyapunov exponents are finite.)}
Let
\[
  u({\omega,}x)=\max\{j\colon\lambda_j({\omega,}x)>0\}.
\]
For $(\omega,x)\in\Lambda$, we define the set
\[
  W^u(\omega,x)=\{y\in M\colon\limsup_{n\to+\infty}\frac{1}{n}\log d(f^{-n}_\omega y,f^{-n}_\omega x)\leq-\lambda_{u({\omega,}x)}({\omega,}x)\},
\]
where $d(\cdot,\cdot)$ is the metric on $M$ induced by its Riemannian structure.
Let
\[
  E^u(\omega,x)=\oplus_{j=1}^{u({\omega,}x)}E_j(\omega,x)\text{, }\quad F^u(\omega,x)=\oplus_{j=u({\omega,}x)+1}^{r({\omega,}x)}E_j(\omega,x).
\]
It is clear that both $E^u(\omega,x)$ and $F^u(\omega,x)$ are invariant under the tangent map, i.e. for $n\in\mathbb{Z}$,
\[
  \mathrm{D}_xf_\omega^nE^u(\omega,x)=E^u(\theta^n\omega,f_\omega^nx)\quad\text{and}\quad\mathrm{D}_xf_\omega^nF^u(\omega,x)=F^u(\theta^n\omega,f_\omega^nx).
\]

The following proposition from \cite{BahnmullerLiu1998} ensures that $W^u(\omega,x)$ is an immersed submanifold of $M$.
\begin{proposition}\label{prop:unstablemanifold}
  For $(\omega,x)\in\Lambda$, the set $W^u(\omega,x)$ is a $C^{1,1}$ immersed submanifold of $M$ tangent at $x$ to $E^u(\omega,x)$.
\end{proposition}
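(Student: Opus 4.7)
The plan is to follow the standard Pesin-theoretic construction of stable/unstable manifolds, adapted to the random (cocycle) setting via tempered Lyapunov charts. The key preparation is to build, for each $(\omega,x)\in\Lambda$ and for a small auxiliary constant $\ep>0$, a family of inner products $\langle\cdot,\cdot\rangle_{(\omega,x),\ep}'$ on $T_xM$ (Lyapunov/Oseledec norms) adapted to the splitting $T_xM=E^u(\omega,x)\oplus F^u(\omega,x)$. The integrability condition \eqref{eq:basic} together with Oseledec's theorem ensures that these norms are tempered along $\Theta$-orbits, and that with respect to them $\mathrm{D}_xf_\omega^{-1}$ is uniformly contracting on $E^u$ by a factor $\leq e^{-\lambda_{u(\omega,x)}(\omega,x)+\ep}$, while the derivative on $F^u$ is expanded by at most $e^{\ep}$.

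Next I would pass to local coordinates. Using the exponential map and the Lyapunov metric, one obtains charts $\Phi_{(\omega,x)}\colon B_{E^u\oplus F^u}(0,r(\omega,x))\to M$ with $\Phi_{(\omega,x)}(0)=x$ and $\mathrm{D}\Phi_{(\omega,x)}(0)$ the identification of the splitting. The radii $r(\omega,x)$ are measurable and tempered, i.e. $\log r(\Theta^n(\omega,x))/n\to 0$. In these charts the connecting maps
\[
  \tilde f_{(\omega,x)}:=\Phi_{\Theta(\omega,x)}^{-1}\circ f_\omega\circ\Phi_{(\omega,x)}
\]
are $C^{1,1}$ small perturbations of their linearizations $\mathrm{D}_xf_\omega$, with Lipschitz constant on the nonlinear part controllable by the chart radius; here the $C^2$ hypothesis on $\mathcal F$ and \eqref{eq:basic} are used crucially to obtain a uniform (tempered) $C^{1,1}$ bound on the nonlinear parts.

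With this setup, the Hadamard--Perron graph transform produces the local unstable manifold as the unique fixed point of a contraction acting on the space of $C^{1,1}$ graphs $\psi\colon B_{E^u}(0,\delta(\omega,x))\to F^u$ with $\psi(0)=0$, $\mathrm{D}\psi(0)=0$, $\mathrm{Lip}(\psi)\leq 1$, and $\mathrm{Lip}(\mathrm{D}\psi)$ bounded. Taking the graph and mapping back via $\Phi_{(\omega,x)}$ yields a local $C^{1,1}$ submanifold $W^u_{\mathrm{loc}}(\omega,x)$ tangent to $E^u(\omega,x)$ at $x$, whose points $y$ satisfy $d(f^{-n}_\omega y,f^{-n}_\omega x)\to 0$ at rate $\leq -\lambda_{u(\omega,x)}(\omega,x)+\ep$. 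The global manifold is then defined as $W^u(\omega,x)=\bigcup_{n\geq 0}f^n_{\theta^{-n}\omega}\bigl(W^u_{\mathrm{loc}}(\theta^{-n}\omega,f^{-n}_\omega x)\bigr)$, which is an injectively immersed $C^{1,1}$ submanifold since each $f^n_{\theta^{-n}\omega}$ is a $C^2$ diffeomorphism. Finally, one checks that this set coincides with $\{y:\limsup\frac{1}{n}\log d(f^{-n}_\omega y,f^{-n}_\omega x)\leq -\lambda_{u(\omega,x)}(\omega,x)\}$ by a two-sided inclusion: the inclusion $\subset$ is immediate from the construction, and for $\supset$ one pulls back a candidate $y$ until it enters a Lyapunov chart of controlled size and then argues it must lie on the local unstable graph.

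The principal obstacle is controlling the dependence of the chart radius $r(\omega,x)$ and of the Lipschitz constants on $(\omega,x)$. Unlike in the uniformly hyperbolic or deterministic $C^1$ partially hyperbolic setting, here the estimates are only tempered, so one must verify at every step that the random quantities (norms of derivatives along orbits, $C^2$-norms of the chart maps, size of the nonlinear perturbation) vary subexponentially; this is exactly what \eqref{eq:basic} buys, through a Borel--Cantelli argument applied to the random $C^2$-norms. Once temperedness is under control, the contraction-mapping portion of the Hadamard--Perron argument goes through verbatim as in the deterministic Pesin theory, and the $C^{1,1}$ regularity follows from the uniform $C^{1,1}$ control on the perturbations provided by the $C^2$ hypothesis.
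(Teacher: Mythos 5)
The paper does not give a proof of this proposition at all: it is quoted directly from Bahnm\"uller--Liu \cite{BahnmullerLiu1998}, whose argument rests on the random Pesin theory developed in Liu--Qian \cite{LiuQian1995}. Your sketch reconstructs precisely that route---tempered Lyapunov norms adapted to the Oseledec splitting, exponential-map charts with tempered radii, a Hadamard--Perron graph transform producing $C^{1,1}$ graphs, and globalization by pushing forward local graphs under $f^n_{\theta^{-n}\omega}$---so the strategy coincides with what the cited reference carries out, and at this level of detail the outline is sound.

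One step is asserted too quickly. After the graph transform at auxiliary level $\ep>0$, a point $y$ on the constructed local graph a priori satisfies only
\[
\limsup_{n\to\infty}\frac{1}{n}\log d\bigl(f^{-n}_\omega y,f^{-n}_\omega x\bigr)\leq -\lambda_{u(\omega,x)}(\omega,x)+\ep,
\]
because the Lyapunov-chart estimates only control the backward derivative on $E^u$ by $e^{-\lambda_{u}+\ep}$. The set $W^u(\omega,x)$ is defined with the sharp rate $-\lambda_{u(\omega,x)}(\omega,x)$, so the inclusion of the constructed manifold into $W^u(\omega,x)$ is not, as you write, ``immediate from the construction.'' To close it you need either to observe that the germs at $x$ of the $\ep$-local manifolds agree for all sufficiently small $\ep$ (so the rate holds for every $\ep>0$ and hence with $\ep=0$ in the $\limsup$), or to compare the nonlinear backward orbit with its linearization and invoke Lyapunov regularity of the orbit of $x$ to upgrade $-\lambda_u+\ep$ to $-\lambda_u$. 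The reverse inclusion and the temperedness control via \eqref{eq:basic} are handled as you describe; with the above repair, the sketch is a correct summary of the standard argument.
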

{When $(\omega,x)\in\Omega\times M\setminus\Lambda$, we let $W^u(\omega,x)=\{x\}$, making the definitions of unstable topological entropy in subsequent sections more transparent.}We call the collection $\{W^u(\omega,x):(\omega,x)\in\Omega\times M\}$ \emph{$W^u$-foliation}.

For a measurable partition $\alpha$ of $\Omega\times M$, and $\omega\in\Omega$, define
\[
  \alpha_\omega(x):=\{ y:(\omega,y)\in\alpha(\omega,x) \},
\]
where $\alpha(\omega,x)$ is the element of $\alpha$ containing $(\omega,x)$. It is clear that for $x$, $x'\in M$, either $\alpha_\omega(x)=\alpha_\omega(x')$ or $\alpha_\omega(x)\cap\alpha_\omega(x')=\emptyset$, so $\alpha_\omega\colon=\{\alpha_\omega(x)\}$ is a partition of $M$.

A measurable partition $\xi$ of $\Omega\times M$ with $\xi\geq\sigma_0$, where $\sigma_0$ is the trivial partition $\{ \{ \omega \}\times M: \omega\in\Omega \}$, is said to \emph{be subordinate to the $W^u$-foliation}, if for ${\mu}$-a.e. $(\omega,x)\in\Omega\times M$, $\xi_\omega(x)\subset W^u(\omega,x)$ and it contains an open neighborhood of $x$ in $W^u(\omega,x)$.

For each measurable partition $\eta$ subordinate to $W^u$-foliation, there is a canonical system of conditional measures $\{{\mu}^\eta_{(\omega,x)}\}_{(\omega,x)\in\Omega\times M}$ of  ${\mu}$ by a classical result of Liu and Qian \cite{LiuQian1995}. And ${\mu}^\eta_{(\omega,x)}$ can be regarded as a measure on $\eta_\omega(x)$, if we identify $\{\omega\}\times\eta_\omega(x)$ with $\eta_\omega(x)$.

Now we give the definition of {random partially hyperbolic dynamical systems}. A random variable $t:\Omega\longrightarrow \mathbb{R}$ is called \emph{$\theta$-invariant} if $t(\theta\omega)=t(\omega)$ for $\mathbf{P}$-a.e. $\omega\in \Omega$.
%, and a random variable $s:\Omega\longrightarrow \mathbb{R}^+$ is called {\em$\theta$-tempered}, if it satisfies $\lim_{n\to\pm\infty}\frac{1}{n}\log s(\theta^n\omega)=0$ for $\mathbf{P}$-a.e. $\omega\in\Omega$.

\begin{definition}\label{domination}
 A $C^2$ RDS $\mathcal{F}$ is said to be {\em partially hyperbolic}, if
 \begin{enumerate}[fullwidth,label=(\roman*)]
 \item there exist {$\theta$-invariant variables $C$ and $\lambda$ on $\Omega$} satisfying for $\mathbf{P}$-a.e. $\omega\in\Omega$
  \[
    C(\omega)>1\quad\text{and}\quad0<\lambda(\omega)<1,
  \]
 {and such that} for all $(\omega,x)\in \Lambda$, $n\in\mathbb{N}$, $u\in F^u(\omega,x)\setminus\{0\}$ and $v\in E^u(\omega,x)\setminus\{0\}$,
  \[
    \frac{\Vert\mathrm{D}f^n_\omega u\Vert}{\Vert u\Vert}\leq C(\omega)(\lambda(\omega))^n\frac{\Vert\mathrm{D}f^n_\omega v\Vert}{\Vert v\Vert};
  \]
 \item\label{uniformexpansion} $\mathcal{F}$ is uniformly expanding in $x$ along $W^u$-foliation, {i.e., there exists a $\theta$-invariant random variable $\tilde{\lambda}(\omega)>1$ such that $\Vert Df^1_\omega|E^u(\omega,x)\Vert>\tilde{\lambda}(\omega)$.}
 \end{enumerate}
\end{definition}

\begin{remark}\label{rmk:domination}
  \begin{enumerate}[fullwidth,label=(\roman*)]
    \item In fact, $\tilde\lambda(\omega)$ is a.e. constant, since $\theta$ is $\mathbf{P}$-ergodic.
    \item If \ref{uniformexpansion} in Definition \ref{domination} does not hold, $\mathcal{F}$ is called a $C^2$ RDS with $u$-domination {(cf. \cite{WangWangZhu2017} for deterministic case)}. Notice that the uniform expansion is a crucial property for our proofs of the main results. {We think} similar results {should} hold for $C^2$ RDSs with $u$-domination, {but more complicated techniques involving Pesin theory must be applied.}
  \end{enumerate}
\end{remark}

\begin{BA}
  In the remaining of this paper, we always assume that $\mathcal{F}$ is a $C^2$ {random partially hyperbolic dynamical system}.
\end{BA}

\begin{exa*}
Let $f$ be a $C^2$ partially hyperbolic diffeomorphism. Combining the techniques in \cite{Liu1998} and \cite{HuZhu2014}, we can obtain a random partially hyperbolic dynamical system satisfying the above assumption via small $C^2$ random perturbations.
\end{exa*}

We call a {measurable partition $\alpha$ of $\Omega\times M$ is \emph{fiberwise finite} if for $\mathbf{P}$-a.e. $\omega\in\Omega$, $\alpha_\omega$ is finite and
\[
  \int_{\Omega}K(\omega)\mathrm{d}\mathbf{P}<\infty,
\]
where $K(\omega)$ is the cardinality of $\alpha_\omega$.}

For a fiberwise finite partition $\alpha$ of $\Omega\times M$ and $\omega\in\Omega$, define $\mathrm{diam}(\alpha_\omega)$ as follows
\[
  \mathrm{diam}(\alpha_\omega):=\max_{A\in\alpha_\omega}\mathrm{diam}(A),
\]
where $\mathrm{diam}(A)=\sup_{x,y\in A}d(x,y)$. The variable $\mathrm{diam}(\alpha_\cdot)\colon\Omega\to\mathbb{R}^+$ is defined as the {\em diameter} of $\alpha$.

Let $\alpha$ be a fiberwise finite partition of $\Omega\times M$ with diameter small enough. By such $\alpha$ we can construct a measurable partition as follows. Define
{\[
  \eta=\{\alpha(\omega,x)\cap(\{\omega\}\times W^u_{\text{loc}}(\omega,x)):(\omega,x)\in\Omega\times M\},
\]
where $W^u_{\text{loc}}(\omega,x)$ is a local unstable manifold at $(\omega,x)$ whose size is greater than the diameter of $\alpha_\omega$.} Denote by $\mathcal{P}(\Omega\times M)$ and $\mathcal{P}^u(\Omega\times M)$ the set of all fiberwise finite partitions and the set of all partitions constructed as above respectively.
\begin{remark}
  It is easy to check that if for $\mathbf{P}$-a.e. $\omega$, ${\mu}_\omega(\partial \alpha_\omega)=0$, where $\partial \alpha_\omega=\bigcup_{A\in\alpha_\omega}\partial A$, {then a} measurable partition described as above is a partition subordinate to the $W^u$-foliation.
\end{remark}

The following proposition ensures the existence of another class of useful partitions.
\begin{proposition}[Cf. Proposition 3.7, \cite{BahnmullerLiu1998}]\label{prop:specialpartition}
  Let $\mathcal{F}$ be a $C^2$ RDS. Then there exists a measurable partition $\xi_u$ of $\Omega\times M$ satisfying the following properties:
  \begin{enumerate}[label=(\roman*)]
  	\item $\xi_u$ is increasing, i.e., $\Theta^{-1}\xi_u\geq\xi_u$;
  	\item $\xi_u$ is subordinate to the $W^u$-foliation;
  	\item $\bigvee_{n=1}^{+\infty}\Theta^{-n}\xi_u=\varepsilon$, where $\varepsilon$ is the partition of $\Omega\times M$ into points, i.e. $\xi_u$ is a generator;
  	\item $\mathscr{B}(\bigwedge_{n=1}^{+\infty}\Theta^n\xi_u)=\mathscr{B}^u(\mathcal{F})$, where for a measurable partition  $\xi$ of $\Omega\times M$, $\mathscr{B}(\xi)$ is the $\sigma$-algebra generated by $\xi$ and $\mathscr{B}^u(\mathcal{F})= \{B\in\mathscr{F}\times\mathscr{B}(M)\colon(\omega,x)\in B\text{ implies }\{\omega\}\times W^u(\omega,x)\subset B\}$.
  \end{enumerate}
\end{proposition}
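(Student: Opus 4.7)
The plan is to follow the classical construction of increasing measurable partitions subordinate to an unstable foliation, going back to Rokhlin and Ledrappier--Strelcyn, and adapted to random skew products in Liu--Qian's monograph and Bahnm\"uller--Liu \cite{BahnmullerLiu1998}. The essential geometric input is the uniform expansion along $W^u$ (Definition \ref{domination}(ii)), which supplies a uniform lower bound $r_0>0$ on the radii of the local unstable manifolds $W^u_{\text{loc}}(\omega,x)$ and a uniform expansion factor $\tilde\lambda>1$ on leaves. Without this uniformity, one would be forced into random Pesin-type block techniques; its presence lets us work with a single scale throughout.

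First I would construct a fiberwise finite measurable partition $\alpha$ of $\Omega\times M$ with fiber diameter $<r_0/4$, whose fiber boundaries $\partial\alpha_\omega$ have zero leafwise Riemannian measure on every local unstable plaque for $\mathbf{P}$-a.e. $\omega$. Such $\alpha$ is obtained by covering $M$ with small balls and perturbing their radii via a Fubini/Sard-type argument to discard the null set of ``bad'' choices of boundary. Define the leafwise refinement $\eta(\omega,x):=\alpha(\omega,x)\cap(\{\omega\}\times W^u_{\text{loc}}(\omega,x))$; each atom lies in a local unstable plaque and, by the boundary choice, contains a relatively open neighborhood of $(\omega,x)$ in that plaque. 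Then set
\[
  \xi_u:=\bigvee_{n=0}^{+\infty}\Theta^{n}\eta.
\]
Property (i) is immediate, since $\Theta^{-1}\xi_u=\Theta^{-1}\eta\vee\xi_u\geq\xi_u$. Property (iii) follows from uniform expansion: an atom of $\bigvee_{n\geq 1}\Theta^{-n}\xi_u$ at $(\omega,x)$ is contained in $\bigcap_{n\geq 0}(f_\omega^{n})^{-1}(W^u_{\text{loc}}(\theta^n\omega,f_\omega^n x))$, an intersection of plaques whose preimages shrink at rate $\tilde\lambda^{-n}$, and so collapses to the single point $(\omega,x)$. Property (iv) is then obtained by noting that $\bigwedge_{n\geq 1}\Theta^n\xi_u$ has $\Theta$-saturated atoms that coincide with entire global unstable leaves, because the plaques appearing in $\Theta^n\eta$-atoms grow without bound under uniform expansion, while the generator property from (iii) transported to the quotient rules out any proper refinement.

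The main obstacle is the verification of (ii): showing that the atom
\[
  \xi_u(\omega,x)=\bigcap_{n\geq 0}\Theta^n\bigl(\eta(\Theta^{-n}(\omega,x))\bigr)
\]
\emph{still} contains a relative open neighborhood of $(\omega,x)$ in $W^u_{\text{loc}}(\omega,x)$, after infinitely many intersections with pulled-forward $\alpha$-boundaries. A random Borel--Cantelli argument is required, using the integrability assumption \eqref{eq:basic} together with $\theta$-invariance of $\tilde\lambda$, to ensure that the boundaries $\Theta^n\partial\alpha$ do not accumulate on a leafwise set of positive measure. Uniform expansion then guarantees that these boundaries approach $(\omega,x)$ only at exponential rate, so their limit locus has empty interior in the leaf and $\xi_u(\omega,x)$ is indeed an open neighborhood of $(\omega,x)$ in $W^u_{\text{loc}}(\omega,x)$. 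This is the only step where the precise quantitative expansion rate and the integrability of $|\mathcal{F}(\pm 1,\omega)|_{C^2}$ both enter in an essential way.
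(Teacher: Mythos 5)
Your proposal is essentially correct, but it follows a genuinely different route from the one the paper adopts. The paper does not prove Proposition \ref{prop:specialpartition} directly; it cites Bahnm\"uller--Liu and, later in Section \ref{sec:Meaentropy}, reproduces their \emph{flow-box} construction: fix a small set $B_{\tilde\Lambda}$ of positive $\mu$-measure, define a partition $\hat\xi_{u,x_0}$ whose only nontrivial atoms are leafwise plaques $S_u(\omega,x,r)$ inside a tube $S_{u,r}$ over $B_{\tilde\Lambda}$ (all of $(\Omega\times M)\setminus S_{u,r}$ forming a single atom), tune $r\in[\hat r/2,\hat r]$ by a Ledrappier--Strelcyn/Sard-type argument so that orbits do not accumulate on plaque boundaries, and set $\xi_u=\bigvee_{j\ge 0}\Theta^j\hat\xi_{u,x_0}$. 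Your construction instead starts from a \emph{global} fiberwise finite partition $\alpha$, forms the leafwise refinement $\eta\in\mathcal{P}^u(\Omega\times M)$ exactly as the paper does for the non-increasing partitions, and sets $\xi_u=\bigvee_{n\ge 0}\Theta^n\eta$; this is closer in spirit to the Ma\~n\'e/Ledrappier--Young approach. Both work, and you correctly isolate the key difficulty in part (ii) as a Borel--Cantelli estimate preventing $\Theta^n\partial\alpha$ from accumulating on a leafwise-full-measure set, with uniform expansion giving the geometric-series tail. What the flow-box route buys in exchange: refinements occur only at the rare returns to $B_{\tilde\Lambda}$, so boundary accumulation is handled once by the choice of $r$ rather than at every step; and the intermediate partitions $\hat\xi_{u,-k}=\bigvee_{j=0}^k\Theta^j\hat\xi_{u,x_0}$ together with the sets $S_{-k}=\bigcup_{j=0}^k\Theta^j S_{u,r}$ are exactly what the proofs of Lemma \ref{lem:approach} and Lemma \ref{lem:convergence} use, so the flow-box structure propagates usefully through the rest of the section. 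Two caveats on your write-up: the proposition as stated is for a general $C^2$ RDS (and the cited result covers the nonuniformly hyperbolic case via Pesin blocks), whereas your uniform scale $r_0$ for local unstable plaques invokes the partial-hyperbolicity Basic Assumption; this is fine in the paper's setting but should be acknowledged, since it is why you can avoid block-by-block constructions. And your argument for part (iv) is only a sketch: identifying $\mathscr{B}(\bigwedge_{n\ge 1}\Theta^n\xi_u)$ with $\mathscr{B}^u(\mathcal{F})$ requires checking both inclusions mod $0$, the nontrivial one being that atoms of the decreasing limit are entire leaves, which again rests on the same recurrence/expansion interplay rather than on the generator property (iii) ``transported to the quotient.''
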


A partition is called an \emph{increasing partition subordinate to {$W^u$-foliation}} if it satisfies Proposition \ref{prop:specialpartition}, and denote by $\mathcal{Q}^u(\Omega\times M)$ the set of all such partitions.

As the deterministic case, we can define two types of the unstable metric entropies along $W^u$-foliation for $\mathcal{F}$. The first one is defined via the average decreasing rate of the measure of the Bowen balls (as in \cite{QianQianXie2003}), and we denote it by $\tilde{h}^u_\mu(\mathcal{F})$.  The second one is defined via the conditional entropy of $\mathcal{F}$ along $W^u$-foliation (adapted from  \cite{HuHuaWu2017} and \cite{WangWangZhu2017}), which we denote by ${h}^u_\mu(\mathcal{F})$. Their precise definitions are given {in Section \ref{sec:Meaentropy}}.

In fact, when $\mu$ is ergodic, the entropies described above are equivalent, i.e., we have the following theorem.
\begin{theoremm}\label{thm:localvsfinite}
Let $\mu$ be an ergodic measure of $\mathcal{F}$. Then
  \[
    \tilde{h}^u_\mu(\mathcal{F})={h}^u_\mu(\mathcal{F}).
  \]
\end{theoremm}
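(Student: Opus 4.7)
The plan is to sandwich the partition-refinement atoms that appear in the definition of $h^u_\mu(\mathcal{F})$ between dynamical $u$-balls of comparable scales, and then invoke the Shannon--McMillan--Breiman theorem (Theorem \ref{thm:SMB}) together with ergodicity of $\mu$ to identify the two exponential decay rates. First I would fix a fiberwise finite partition $\alpha\in\mathcal{P}(\Omega\times M)$ with small fiberwise diameter $\varepsilon$ and with $\mu_\omega(\partial\alpha_\omega)=0$ for $\mathbf{P}$-a.e.\ $\omega$, and form $\eta\in\mathcal{P}^u(\Omega\times M)$ by intersecting $\alpha$ with local unstable manifolds whose sizes dominate $\mathrm{diam}(\alpha_\omega)$. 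By Theorem \ref{thm:SMB}, for $\mu$-a.e.\ $(\omega,x)$ the pointwise limit
\[
  -\tfrac{1}{n}\log\mu^\eta_{(\omega,x)}\bigl(\alpha^n_\omega(x)\bigr)\longrightarrow h^u_\mu(\mathcal{F},\alpha),\qquad \alpha^n:=\bigvee_{i=0}^{n-1}\Theta^{-i}\alpha,
\]
holds, and by ergodicity of $\mu$ the Bowen-ball rate $-\tfrac{1}{n}\log\mu^\eta_{(\omega,x)}(B^u_n(\omega,x,\varepsilon))$ used in the definition of $\tilde h^u_\mu(\mathcal{F})$ is $\mu$-a.s.\ constant for each $\varepsilon>0$.

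For the direction $\tilde h^u_\mu(\mathcal{F})\le h^u_\mu(\mathcal{F})$, note that when $\mathrm{diam}(\alpha_\omega)<\varepsilon$ on a full-measure set of $\omega$, any two points in the same $\alpha^n_\omega$-atom stay within distance $\varepsilon$ under each of $f^0_\omega,\dots,f^{n-1}_\omega$, whence $\alpha^n_\omega(x)\cap W^u_{\mathrm{loc}}(\omega,x)\subseteq B^u_n(\omega,x,\varepsilon)$. This yields $\mu^\eta_{(\omega,x)}(\alpha^n_\omega(x))\le \mu^\eta_{(\omega,x)}(B^u_n(\omega,x,\varepsilon))$; taking $-\tfrac{1}{n}\log$ and $n\to\infty$, then the supremum over admissible $\alpha$ and $\varepsilon\to 0$, gives $h^u_\mu(\mathcal{F})\ge \tilde h^u_\mu(\mathcal{F})$.

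For the opposite inequality I need atoms of $\alpha^n$ to \emph{contain}, rather than be contained in, suitable Bowen $u$-balls. The plan is, for each small $\delta>0$, to construct a partition $\alpha^\delta$ of fiberwise diameter less than $\delta$ together with a measurable function $r_\delta\colon\Omega\times M\to(0,\delta)$ with $\log r_\delta\in L^1(\mu)$, such that $B^u(\omega,x,r_\delta(\omega,x))\subseteq\alpha^\delta_\omega(x)$ for $\mu$-a.e.\ $(\omega,x)$. Using the uniform expansion along $W^u$ from Definition \ref{domination}(ii), combined with the Birkhoff ergodic theorem applied to $\log r_\delta$, one obtains $B^u_n(\omega,x,\rho_n(\omega,x))\subseteq (\alpha^\delta)^n_\omega(x)$ with $\rho_n$ shrinking sub-exponentially in $n$. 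Then $\mu^\eta(B^u_n(\omega,x,\rho_n))\le\mu^\eta((\alpha^\delta)^n_\omega(x))$, and the sub-exponential bound on $\rho_n$ together with the monotonicity of $\varepsilon\mapsto\mu^\eta(B^u_n(\omega,x,\varepsilon))$ lets one replace $\rho_n$ by $\delta$ without any loss of exponential rate. Theorem \ref{thm:SMB} then gives $\tilde h^u_\mu(\mathcal{F},\delta)\ge h^u_\mu(\mathcal{F},\alpha^\delta)$, and sending $\delta\to 0$ (with $\alpha^\delta$ refining) completes the equality in Theorem \ref{thm:localvsfinite}.

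The hardest step will be the random construction of $\alpha^\delta$ with the requisite integrability $\log r_\delta\in L^1(\mu)$. In the random setting one must control how $\partial\alpha^\delta_{\theta^i\omega}$ meets the orbit $\{f^i_\omega x\}_{i\geq 0}$ uniformly in $i$; this is typically achieved by averaging a fine deterministic partition over random translates and applying a Borel--Cantelli argument to the tail events $\{d^u(f^i_\omega x,\partial\alpha^\delta_{\theta^i\omega})<e^{-\gamma i}\}$ for small $\gamma>0$, combined with a Ma\~n\'e-type estimate bounding $\mu$ of a $\delta$-neighborhood of $\partial\alpha^\delta$. Once this integrability is secured, the sandwich argument above closes, and Theorem \ref{thm:SMB} together with ergodicity finishes the proof.
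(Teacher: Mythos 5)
The proposal has a genuine logical gap: it is circular. You invoke Theorem \ref{thm:SMB} to identify $h_\mu(\mathcal{F},\alpha|\eta)$ with the pointwise limit of $-\tfrac1n\log\mu^\eta_{(\omega,x)}(\alpha^{n-1}_0(\omega,x))$, but in the paper Theorem \ref{thm:SMB} is proved \emph{after} and \emph{using} Theorem \ref{thm:localvsfinite}. Specifically, Lemma \ref{lem:SMBlow} and the proof of Theorem \ref{thm:SMB} both explicitly invoke Proposition \ref{prop:localvsfinite2}, which is the content of Theorem \ref{thm:localvsfinite}. Unless you supply an independent martingale/subadditive proof of the conditional SMB theorem (not a trivial matter in the fibred, leafwise-conditional setting), your argument assumes what it sets out to prove.

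There is a second, independent flaw: you treat the Bowen-ball rate as $-\tfrac1n\log\mu^{\eta}_{(\omega,x)}(V^u(\mathcal{F},\omega,x,n,\varepsilon))$, but Definition \ref{def:metricentropy1} uses the conditional measures $\mu^{\xi_u}_{(\omega,x)}$ for an \emph{increasing} partition $\xi_u\in\mathcal{Q}^u(\Omega\times M)$, not $\mu^\eta_{(\omega,x)}$ for $\eta\in\mathcal{P}^u(\Omega\times M)$. These are different canonical systems of conditional measures, and the sandwich $\alpha^{n-1}_0(\omega,x)\cap W^u_{\mathrm{loc}}\subseteq V^u(\cdots,\varepsilon)$ only converts into the desired inequality of entropies once you relate the two families; your write-up passes over this. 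A related smaller issue is the ambient-versus-intrinsic metric comparison (you bound $d$, the Bowen ball uses $d^u$), which is standard but needs to be stated. The paper's route is quite different and avoids all of this: Proposition \ref{prop:localvsfinite2} first shows $h_\mu(\mathcal{F},\alpha|\eta)$ is independent of both $\alpha$ and $\eta$, then compares $\frac1n H_\mu(\alpha^{n-1}_0|\eta)$ with $H_\mu(\xi_u|\Theta\xi_u)$ using the entropy identities of Lemma \ref{lem:info2} together with the approximation Lemmas \ref{lem:approach} and \ref{lem:convergence}, and finally links $H_\mu(\xi_u|\Theta\xi_u)$ to the Bowen-ball quantity via the already-known Lemma \ref{lem:locvsfinite} from \cite{QianQianXie2003}. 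This bypasses both the need for SMB and the construction of a finite partition $\alpha^\delta$ whose atoms contain unstable balls with $\log r_\delta\in L^1(\mu)$ --- the very step you yourself flag as hardest.
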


For measurable partitions $\beta$ and $\gamma$ of $\Omega\times M$, we can define the conditional information function and conditional entropy of $\beta$ {with respect to $\gamma$} for an invariant measure ${\mu}$ of $\Theta$, which are denoted by $I_{{\mu}}(\beta|\gamma)$ and $H_{{\mu}}(\beta|\gamma)$ respectively. These notations are standard, and more details are described at the end of this section. The conditional entropy of $\mathcal{F}$ for measurable partition $\beta$ with respect to $\gamma$ can also be given, which is denoted by $h_\mu(\mathcal{F},\beta|\gamma)$ and described in Section \ref{sec:Meaentropy}. For integers $0\leq k<j$, denote $\beta_k^j=\Theta^{-j}\beta\vee\Theta^{-(j-1)}\beta\vee\cdots\vee\Theta^{-k}\beta$. A version of Shannon-McMillan-Breiman Theorem in our case is also obtained as follows.
\begin{theoremm}\label{thm:SMB}
 Suppose $\mu$ is an ergodic measure of $\mathcal{F}$. {For any $\alpha\in \mathcal{P}(\Omega\times M), \eta\in\mathcal{P}^u(\Omega\times M)$}, we have
  \[
    \lim_{n\to\infty}\frac{1}{n}I_{{\mu}}(\alpha^{n-1}_0|\eta)(\omega,x)=h_\mu(\mathcal{F},\alpha|\eta).
  \]
\end{theoremm}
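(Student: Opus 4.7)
The plan is to follow the classical Shannon--McMillan--Breiman template, expressing $I_\mu(\alpha_0^{n-1}|\eta)$ as a Birkhoff-type sum along $\Theta$-orbits of a sequence of conditional information functions, and then applying an $L^1$-convergence (Maker-type) ergodic theorem.

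\emph{Step 1 (telescoping).} Iterating the chain rule $I_\mu(\beta\vee\gamma\,|\,\delta)=I_\mu(\beta\,|\,\gamma\vee\delta)+I_\mu(\gamma\,|\,\delta)$ along $\alpha_0^{n-1}=\alpha_0^{n-2}\vee\Theta^{-(n-1)}\alpha$ gives
\[
I_\mu(\alpha_0^{n-1}\,|\,\eta)=\sum_{k=0}^{n-1}I_\mu\bigl(\Theta^{-k}\alpha\,\big|\,\alpha_0^{k-1}\vee\eta\bigr),
\]
with $\alpha_0^{-1}$ interpreted as the trivial partition. Since $\mu$ is $\Theta$-invariant one has the covariance $I_\mu(\Theta^{-k}\beta\,|\,\Theta^{-k}\gamma)(\omega,x)=I_\mu(\beta\,|\,\gamma)(\Theta^k(\omega,x))$; using the identity $\alpha_0^{k-1}=\Theta^{-k}\beta_k$ where $\beta_k:=\bigvee_{j=1}^{k}\Theta^{j}\alpha$ (the join of forward pushforwards of $\alpha$), this converts into
\[
I_\mu(\alpha_0^{n-1}\,|\,\eta)(\omega,x)=\sum_{k=0}^{n-1}\phi_k\!\bigl(\Theta^k(\omega,x)\bigr),\qquad \phi_k:=I_\mu\bigl(\alpha\,\big|\,\beta_k\vee\Theta^{k}\eta\bigr).
\]

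\emph{Step 2 ($L^1$-convergence and the ergodic theorem).} I next establish that $\phi_k\to\phi_\infty$ in $L^1(\mu)$ together with a Breiman-type maximal estimate $\sup_k\phi_k\in L^1(\mu)$. Granted these, a Maker/Breiman-type ergodic theorem applied to the ergodic skew product $\Theta$ yields
\[
\lim_{n\to\infty}\frac{1}{n}\sum_{k=0}^{n-1}\phi_k\circ\Theta^k=\int\phi_\infty\,d\mu\qquad \mu\text{-a.e.}
\]
The expected version of Step~1, namely $H_\mu(\alpha_0^{n-1}\,|\,\eta)=\sum_{k=0}^{n-1}H_\mu(\alpha\,|\,\beta_k\vee\Theta^{k}\eta)$, combined with Cesaro summation and the definition of $h_\mu(\mathcal{F},\alpha\,|\,\eta)$ from Section~\ref{sec:Meaentropy}, identifies $\int\phi_\infty\,d\mu$ with $h_\mu(\mathcal{F},\alpha\,|\,\eta)$.

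\emph{Main obstacle.} The crux is the convergence of $\phi_k$ and the maximal integrability, because the conditioning partition $\beta_k\vee\Theta^{k}\eta$ is non-monotone: $\beta_k$ refines as $k$ grows, whereas by Definition~\ref{domination}(ii) the uniform expansion along the $W^u$-foliation forces atoms of $\Theta^{k}\eta$ to grow inside unstable leaves, effectively coarsening $\Theta^{k}\eta$ toward the partition into whole $W^u$-leaves. A direct increasing or decreasing martingale argument is therefore unavailable. The resolution is to exploit that $\eta\in\mathcal{P}^u(\Omega\times M)$ is subordinate to the $W^u$-foliation and that the Liu--Qian conditional measures $\mu^{\eta}_{(\omega,x)}$ are well defined on local unstable manifolds: sandwiching $\phi_k$ between sequences obtained by freezing one of the two indices produces the $L^1$-limit $\phi_\infty$, while the maximal bound follows from the fiberwise finiteness of $\alpha$ (integrability of the cardinality $K(\omega)$) combined with a Chung--Shields type estimate on conditional information.
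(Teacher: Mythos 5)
The telescoping in Step~1 is exactly Lemma~\ref{lem:info2}\ref{lem:info2_item1}, so the setup is sound; the genuine gap is in Step~2, precisely at the point you yourself flag as the ``main obstacle.'' The conditioning partitions $\beta_k\vee\Theta^k\eta$ are neither increasing nor decreasing in $k$: $\beta_k$ refines, while the unstable atoms of $\Theta^k\eta$ grow under uniform expansion. This non-monotonicity defeats both ingredients of your plan. The ``sandwich by freezing one index'' does not produce monotone comparison sequences, because for $N<k$ the partitions $\Theta^N\eta$ and $\Theta^k\eta$ are in general not comparable (their atoms are not nested inside an unstable leaf), so fixing one index yields neither an a.e.\ upper nor lower bound for $\phi_k$; martingale convergence is therefore not available and the a.e.\ (or $L^1$) limit $\phi_\infty$ of $\phi_k=I_\mu(\alpha\,|\,\beta_k\vee\Theta^k\eta)$ is not established. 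Likewise, the Chung--Neveu maximal estimate $\sup_k\phi_k\in L^1(\mu)$ (the analogue of Lemma~\ref{lem:semicontiofparti}) is proved via a Doob-type inequality that requires the conditioning $\sigma$-algebras to increase; it is not known for arbitrary non-monotone sequences, and fiberwise finiteness of $\alpha$ does not rescue it. Without both a.e.\ convergence and the maximal bound, the Maker-type ergodic theorem you want to invoke does not apply, so the argument does not close.

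The paper avoids this difficulty entirely by a two-sided bound built around the increasing generator $\xi\in\mathcal{Q}^u(\Omega\times M)$. For the lower bound it compares the atoms $(\alpha_0^{k+n-1}\vee\xi)_\omega(x)$ with Bowen balls $V^u(\mathcal{F},\omega,x,n,\epsilon)$ (Lemma~\ref{lem:SMBlow}), then transfers from $\xi$ to $\eta$ via Lemma~\ref{lem:SMBlow2}. For the upper bound it writes
\[
I_\mu(\alpha_0^{n-1}|\eta)\le I_\mu(\xi_0^{n-1}|\eta)+I_\mu(\alpha_0^{n-1}|\xi_0^{n-1}\vee\eta),
\]
and exploits that $\xi$ is increasing so $\xi_0^{n-1}=\Theta^{-(n-1)}\xi$: the first term then telescopes into a genuine Birkhoff sum of the single function $I_\mu(\Theta^{-1}\xi\,|\,\xi)$, which the ordinary ergodic theorem handles (Lemma~\ref{lem:SMBup}), while the correction term is $o(n)$ a.e.\ (Lemma~\ref{lem:SMBup2}). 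The increasing generator is precisely the structural device that turns the telescoping into a sum with a fixed integrand, which is what your decomposition in terms of $\beta_k\vee\Theta^k\eta$ lacks. If you want to salvage your route you would need either to prove a non-monotone Chung--Neveu lemma tailored to this $W^u$-geometry, or to insert $\xi$ as the paper does.
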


We can also define unstable topological entropy and unstable pressure for a potential function $\phi$, whose precise definitions are described in Section \ref{sec:Pressure}, and we denote them by $h_{\text{top}}^u(\mathcal{F})$ and $P^u(\mathcal{F},\phi)$ respectively. Naturally, a variational principle is formulated in the following. Denote the set
\[
  \{\phi\in L^1(\Omega\times M):\phi\text{ is measurable in }\omega\text{, continuous in }x\}
\]
by $L^1(\Omega,C(M))$.
\begin{theoremm}\label{thm:vp}
  Let $\phi\in L^1(\Omega,C(M))$. Then we have
  \[
    \sup_{\mu\in\mathcal{M}_{\mathbf{P}}(\mathcal{F})}\left\{h^u_\mu(\mathcal{F})+\int_{\Omega\times M}\phi\mathrm{d}{\mu}\right\}=P^u(\mathcal{F},\phi).
  \]
\end{theoremm}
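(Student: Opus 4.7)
The plan is to adapt Misiurewicz's classical proof of the variational principle to the random unstable-foliation setting, splitting the equality into an upper bound that holds for every $\mu\in\mathcal{M}_{\mathbf{P}}(\mathcal{F})$ and a lower bound realised as a weak-$*$ limit along probability measures built from near-maximising separated sets on local unstable manifolds. The tools are the two equivalent definitions of $h^u_\mu(\mathcal{F})$ (Theorem \ref{thm:localvsfinite}), the Shannon--McMillan--Breiman theorem for $h_\mu(\mathcal{F},\alpha\mid\eta)$ (Theorem \ref{thm:SMB}), the Bowen-style definition of $P^u(\mathcal{F},\phi)$ via fiberwise partition functions
\[
Z_n(\omega,\phi,\varepsilon):=\sup\Bigl\{\sum_{y\in E}e^{S_n\phi(\omega,y)}: E\subset W^u_{\text{loc}}(\omega,x_\omega)\text{ is }(\omega,n,\varepsilon)\text{-separated}\Bigr\}
\]
with $S_n\phi(\omega,y)=\sum_{k=0}^{n-1}\phi(\Theta^k(\omega,y))$, and a random ergodic-decomposition argument so that the upper bound may be proved first for ergodic $\mu$.

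\emph{Upper bound.} Fix $\varepsilon>0$, assume $\mu$ is ergodic, and pick $\alpha\in\mathcal{P}(\Omega\times M)$ with $\mathrm{diam}(\alpha_\omega)<\varepsilon$ and $\mu_\omega$-null atom boundaries, together with its associated $\eta\in\mathcal{P}^u(\Omega\times M)$. Inside each local unstable leaf $\eta_\omega(x)$, fiberwise Jensen's inequality applied to the conditional probabilities of $\mu$ gives
\[
-\sum_A p_A\log p_A+\sum_A p_A\,\overline{S_n\phi}(A)\;\le\;\log\sum_A e^{\overline{S_n\phi}(A)},
\]
where $A$ ranges over atoms of $(\alpha_0^{n-1})_\omega$ that meet $\eta_\omega(x)$, $p_A=\mu^\eta_{(\omega,x)}(A)$, and $\overline{S_n\phi}(A)=\sup_{y\in A}S_n\phi(\omega,y)$. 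Integrating against $\mu$, using continuity of $\phi$ in $x$ to replace $\overline{S_n\phi}(A)$ by $S_n\phi$ up to an $o_\varepsilon(1)$ error, and bounding the right-hand side by $\log Z_n(\omega,\phi,\varepsilon)$, yields
\[
H_\mu(\alpha_0^{n-1}\mid\eta)+\int S_n\phi\,d\mu\;\le\;\int_\Omega\log Z_n(\omega,\phi,\varepsilon)\,d\mathbf{P}(\omega)+n\,o_\varepsilon(1).
\]
Dividing by $n$, letting $n\to\infty$ and invoking Theorem \ref{thm:SMB} on the left together with Kingman's subadditive ergodic theorem for $\frac{1}{n}\log Z_n(\cdot,\phi,\varepsilon)$ on the right, then sending $\varepsilon\to0$, delivers $h^u_\mu(\mathcal{F})+\int\phi\,d\mu\le P^u(\mathcal{F},\phi)$.

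\emph{Lower bound.} Via a measurable selection theorem, choose $\omega$-measurable $(\omega,n,\varepsilon)$-separated sets $E_n(\omega)$ in local unstable leaves nearly attaining $Z_n(\omega,\phi,\varepsilon)$, and form the sample measures
\[
\sigma_n^\omega:=\frac{1}{Z_n(\omega,\phi,\varepsilon)}\sum_{y\in E_n(\omega)}e^{S_n\phi(\omega,y)}\delta_y,\qquad\tilde\sigma_n:=\int_\Omega\delta_\omega\otimes\sigma_n^\omega\,d\mathbf{P}(\omega),
\]
together with $\mu_n:=\frac{1}{n}\sum_{k=0}^{n-1}\Theta^k_{\ast}\tilde\sigma_n$. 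Each $\mu_n$ is a probability on $\Omega\times M$ with marginal $\mathbf{P}$; extract a limit $\mu$ in the Crauel narrow topology on such measures and verify $\mu\in\mathcal{M}_{\mathbf{P}}(\mathcal{F})$ in the standard way. Pick $\alpha\in\mathcal{P}(\Omega\times M)$ with $\mathrm{diam}(\alpha_\omega)<\varepsilon/4$ and $\mu$-null boundaries, so that each atom of $(\alpha_0^{n-1})_\omega$ contains at most one point of $E_n(\omega)$; then
\[
H_{\tilde\sigma_n}(\alpha_0^{n-1}\mid\eta)+\int S_n\phi\,d\tilde\sigma_n=\int_\Omega\log Z_n(\omega,\phi,\varepsilon)\,d\mathbf{P}(\omega).
\]
A fiberwise Misiurewicz block decomposition (cut $\{0,\dots,n-1\}$ into windows of a fixed length $q$ plus a remainder, exploit concavity of conditional entropy and $\Theta$-invariance of $\mathbf{P}$) converts the left-hand side into $q\bigl(h_\mu(\mathcal{F},\alpha\mid\eta)+\int\phi\,d\mu\bigr)+\mathrm{err}(q,n)$ with $\mathrm{err}(q,n)/n\to 0$ as $n\to\infty$. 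Letting $n\to\infty$, then $q\to\infty$, and finally $\varepsilon\to0$, gives the reverse inequality $h^u_\mu(\mathcal{F})+\int\phi\,d\mu\ge P^u(\mathcal{F},\phi)$.

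\emph{Main obstacle.} The principal difficulty lies in the lower bound: the measurable selection of $E_n(\omega)$ and $\sigma_n^\omega$, and above all the need for $\Theta$-invariance of the limit measure together with a usable upper semi-continuity-type statement for $\mu\mapsto h_\mu(\mathcal{F},\alpha\mid\eta)$, force one to work in the Crauel narrow topology rather than ordinary weak-$*$ convergence. The second delicate point is executing Misiurewicz's block estimate with conditional entropy along a random $W^u$-subordinate partition $\eta$, whose atoms depend on $(\omega,x)$ only measurably. This is precisely where Theorem \ref{thm:SMB} becomes indispensable, since it reduces the required entropy estimate to a pointwise statement about conditional information along local unstable leaves, permitting the classical combinatorial argument to be carried out $\omega$-fiberwise.
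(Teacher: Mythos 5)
Your overall strategy — upper bound for all $\mu$ via a Jensen-type inequality, lower bound by constructing measures from near-maximal $(\omega,n,\varepsilon)$-separated sets and running a Misiurewicz block estimate, upper semi-continuity of $\mu\mapsto h^u_\mu(\mathcal{F})$ to pass to the weak-$*$ limit — is the same general shape as the paper's proof. The block decomposition, the use of Lemma \ref{lem:affine} (affineness) and Lemma \ref{lem:semiconti} (upper semi-continuity), and the subordinate partition $\eta\in\mathcal{P}^u(\Omega\times M)$ all match. But you diverge in two ways, and one of them leaves a genuine gap.

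The gap is in the lower bound. You build $\sigma_n^\omega$ on every fiber, integrate to $\tilde\sigma_n=\int_\Omega\delta_\omega\otimes\sigma_n^\omega\,d\mathbf{P}$, and defer the measurability issue to ``a measurable selection theorem,'' noting yourself in the final paragraph that this selection is the principal difficulty — but you never resolve it. Choosing, for $\mathbf{P}$-a.e.\ $\omega$, a base point $x_\omega$, a near-maximal $(\omega,n,\varepsilon)$-separated set $E_n(\omega)\subset\overline{W^u(\omega,x_\omega,\delta)}$, and the associated weighted atomic measure in a jointly measurable way is a nontrivial problem (the local unstable leaves, the $d^u_{\omega,n}$-metrics, and the supremum defining $Z_n(\omega,\phi,\varepsilon)$ all depend on $\omega$ only measurably, and a near-maximizer of a sup over an uncountable family is not automatically a measurable selection). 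The paper avoids this entirely by fixing a single $\mathbf{P}$-generic $\omega_0$ (chosen via Lemma \ref{lem:generic} and the Birkhoff theorem), constructing $\nu_n$ supported on $\{\omega_0\}\times M$, forming $\mu_n=\frac{1}{n}\sum_{i=0}^{n-1}\Theta^i\nu_n$, and then invoking Lemma 2.1 of \cite{Kifer2001} to extract a subsequential limit in $\mathcal{M}_{\mathbf{P}}(\mathcal{F})$. This single-orbit route is not merely a stylistic preference: it eliminates the measurable-selection obstacle rather than relocating it, and it is also where the additional hypothesis $\mathrm{diam}(\alpha_\omega)\ll\varepsilon$ and the quantity $K_n(\omega_0)=\max_{t\in T_j}K(\theta^t\omega_0)$ (controlled by the Birkhoff average of the cardinality function) naturally enter. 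Your plan cannot be considered complete without filling in the measurable-selection step, or better, replacing it with the single-$\omega_0$ device.

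The upper bound difference is less serious but worth noting. The paper exploits Theorem \ref{thm:localvsfinite} directly: for ergodic $\mu$ and a generic $(\omega,x)$ it works with the Bowen-ball conditional measures $\mu^\xi_{(\omega,x)}(V^u(\mathcal{F},\omega,y,n,\varepsilon))$ and a spanning set $F$ of $\overline{W^u(\omega,x,\delta)}\cap B_n$, applying Lemma \ref{lem:wellknown} with $p_i$ these conditional ball measures (which do not sum to one, hence the need for the general form of the lemma) and then integrating in $\omega$. You instead run a Misiurewicz-style partition argument (Jensen on atoms of $(\alpha_0^{n-1})_\omega$ inside $\eta_\omega(x)$), which also works but costs extra bookkeeping: you must compare partition atoms against the $W^u$-separated-set partition function, a step that needs a spanning/separated-set comparison that you do not spell out; and invoking both Theorem \ref{thm:SMB} and Kingman's subadditive ergodic theorem is unnecessary here. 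The left side needs only the definition of $h_\mu(\mathcal{F},\alpha|\eta)$ as a $\limsup$ of averaged conditional entropies, and the right side is handled by Lemma \ref{lem:generic} (that $P^u(\mathcal{F},\phi,\omega,\delta)$ is a.e.\ constant, a consequence of $\theta$-invariance and ergodicity of $\mathbf{P}$), rather than by verifying a fiberwise subadditive relation for $\log Z_n$ which the $\omega$-dependence of the local leaf makes less than automatic.
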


A direct corollary of Theorem \ref{thm:vp} is the following variational principle for unstable entropy.
\begin{corollaryy}\label{coro:vp1}
 \[
   \sup_{\mu\in\mathcal{M}_{\mathbf{P}}(\mathcal{F})}\left\{h^u_\mu(\mathcal{F})\right\}=h_{\text{top}}^u(\mathcal{F}).
 \]
\end{corollaryy}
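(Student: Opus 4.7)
The plan is to derive Corollary \ref{coro:vp1} as an immediate specialization of Theorem \ref{thm:vp} to the zero potential. Concretely, I would apply Theorem \ref{thm:vp} with $\phi\equiv 0$. Since $0\in L^1(\Omega,C(M))$ (the constant zero function is trivially measurable in $\omega$ and continuous in $x$, and integrable with respect to $\mu$ for every $\mu\in\mathcal{M}_{\mathbf{P}}(\mathcal{F})$), the hypothesis of the variational principle is met. Then $\int_{\Omega\times M}\phi\,\mathrm{d}\mu=0$ for every $\mu\in\mathcal{M}_{\mathbf{P}}(\mathcal{F})$, so the left-hand side of Theorem \ref{thm:vp} collapses to $\sup_{\mu\in\mathcal{M}_{\mathbf{P}}(\mathcal{F})}h^u_\mu(\mathcal{F})$.

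It then remains only to identify the right-hand side. By the definition of the unstable pressure to be given in Section \ref{sec:Pressure}, unstable topological entropy is the pressure of the zero potential, i.e.\ $P^u(\mathcal{F},0)=h^u_{\text{top}}(\mathcal{F})$; combining this identity with the previous display yields the claimed equality. There is no real obstacle here: the only thing to verify is that the two definitions in Section \ref{sec:Pressure} indeed agree at $\phi=0$, which is built into the construction (the weighted sums defining $P^u$ reduce to unweighted counts/covers when $\phi$ vanishes). Thus the corollary follows directly and does not require any additional ergodic-theoretic input beyond Theorem \ref{thm:vp}.
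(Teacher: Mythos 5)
Your proposal is correct and coincides with the paper's own reasoning: the paper defines $h^u_{\text{top}}(\mathcal{F})$ as $P^u(\mathcal{F},0)$ in the remark following Definition \ref{def:upressure}, and then obtains Corollary \ref{coro:vp1} by specializing Theorem \ref{thm:vp} to $\phi\equiv 0$. No further verification is needed, since the identification of the right-hand side is a matter of definition rather than a lemma to be checked.
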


In the remaining of this section, we give some knowledge on information function, which is slightly modified in our context.
{Recall that for a measurable partition $\eta$ of a measure space $X$
and a probability measure $\nu$ on $X$, the \emph{canonical system
of conditional measures for $\nu$ and $\eta$} is a family of probability
measures $\{\nu_x^\eta: x\in X\}$ with $\nu_x^\eta\bigl(\eta(x)\bigr)=1$,
such that for every measurable set $B\subset X$, $x\mapsto \nu_x^{\eta}(B)$
is measurable and
\[
\nu (B)=\int_X\nu_x^{\eta}(B)d\nu(x).
\]
See e.g. \cite{Rokhlin1962} for more details.

\begin{definition}
  Let ${\mu}$ be an invariant measure of $(\Omega\times M,\Theta)$, $\alpha$ and $\eta$ be two measurable partitions of $\Omega\times M$. \emph{The information function} of $\alpha$ with respect to ${\mu}$ is defined as
  \[
    I_{{\mu}}(\alpha)(\omega,x):=-\log{\mu}(\alpha(\omega,x)),
  \]
  and the \emph{entropy} of $\alpha$ with respect to ${\mu}$ is defined as
  \[
    H_{{\mu}}(\alpha):=\int_{\Omega\times M}I_{{\mu}}(\alpha)(\omega,x)\mathrm{d}{\mu}(\omega,x)=-\int_{\Omega\times M}\log{\mu}(\alpha(\omega,x))\mathrm{d}{\mu}(\omega,x).
  \]
  The \emph{conditional information function} of $\alpha$ with respect to $\eta$ is defined as
  \[
    I_{{\mu}}(\alpha|\eta)(\omega,x):=-\log{\mu}_{(\omega,x)}^\eta(\alpha(\omega,x)),
  \]
 where $\{{\mu}^\eta_{(\omega,x)}\}_{(\omega,x)\in\Omega\times M}$ is a canonical system of conditional measures  of  ${\mu}$ with respect to $\eta$. Then \emph{the conditional entropy of $\alpha$ with respect to $\eta$} is defined as
   \[
    H_{{\mu}}(\alpha|\eta):=\int_{\Omega\times M}I_{{\mu}}(\alpha|\eta)(\omega,x)\mathrm{d}{\mu}(\omega,x)=-\int_{\Omega\times M}\log{\mu}_{(\omega,x)}^\eta(\alpha(\omega,x))\mathrm{d}{\mu}(\omega,x).
  \]
\end{definition}

For simplicity, sometimes we will use the following notations. For $\omega\in\Omega$, denote
\[
  H_{{\mu}_\omega}(\alpha):=\int_M I_{{\mu}}(\alpha)(\omega,x)\mathrm{d}{\mu}_\omega(x),
\]
and
\[
  H_{{\mu}_\omega}(\alpha|\eta):=\int_M I_{{\mu}}(\alpha|\eta)(\omega,x)\mathrm{d}{\mu}_\omega(x).
\]
It is clear that
\[
  H_{{\mu}}(\alpha)=\int_\Omega H_{{\mu}_\omega}(\alpha)\mathrm{d}\mathbf{P}(\omega),
\]
and
\[
  H_{{\mu}}(\alpha|\eta)=\int_\Omega H_{{\mu}_\omega}(\alpha|\eta)\mathrm{d}\mathbf{P}(\omega).
\]

The following lemmas are derived from \cite{HuHuaWu2017} with slight adaption, which are useful for the proofs of our main results.
\begin{lemma}\label{lem:info}
  Given $\mu\in\mathcal{M}_{\mathbf{P}}(\mathcal{F})$ and let $\alpha$, $\beta$, and $\gamma$ be measurable partitions of $\Omega\times M$ with $H_{{\mu}}(\alpha|\gamma)$, $H_{{\mu}}(\beta|\gamma)<\infty$.
  \begin{enumerate}[label=(\roman*)]
    \item If $\alpha\leq\beta$, then $I_{{\mu}}(\alpha|\gamma)(\omega,x)\leq I_{{\mu}}(\beta|\gamma)(\omega,x)$ and $H_{{\mu}}(\alpha|\gamma)\leq H_{{\mu}}(\beta|\gamma)$;
    \item$I_{{\mu}}(\alpha\vee\beta|\gamma)(\omega,x)=I_{{\mu}}(\alpha|\gamma)(\omega,x)+I_{{\mu}}(\beta|\alpha\vee\gamma)(\omega,x)
    $
    and $H_{{\mu}}(\alpha\vee\beta|\gamma)=H_{{\mu}}(\alpha|\gamma)+H_{{\mu}}(\beta|\alpha\vee\gamma)$;
    \item $H_{{\mu}}(\alpha\vee\beta|\gamma)\leq H_{{\mu}}(\alpha|\gamma)+H_{{\mu}}(\beta|\gamma);$
    \item if $\beta\leq\gamma$, then $H_{{\mu}}(\alpha|\beta)\geq H_{{\mu}}(\alpha|\gamma)$.
  \end{enumerate}
\end{lemma}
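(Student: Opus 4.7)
The plan is to derive all four items from the standard properties of canonical systems of conditional measures in the Rokhlin disintegration framework, carefully adapted to the skew-product space $\Omega\times M$. The unifying ingredient is the \emph{tower property}: whenever $\gamma'\geq\gamma$, one has
\[
\mu_{(\omega,x)}^{\gamma}(B)=\int \mu_{(\omega',x')}^{\gamma'}(B)\,d\mu_{(\omega,x)}^{\gamma}(\omega',x')
\]
for $\mu$-a.e.\ $(\omega,x)$, together with the fact that on the atom $\gamma'(\omega,x)$ the conditional measure $\mu_{(\omega,x)}^{\gamma'}$ is the normalized restriction of $\mu_{(\omega,x)}^{\gamma}$. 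Once this is in place, each item becomes a short measure-theoretic manipulation with $-\log$ and Jensen's inequality, exactly as in the deterministic case of \cite{HuHuaWu2017}; the only adaptation is to keep careful track of the fiber structure via the sample measures $\{\mu_\omega\}$.

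For \textbf{(i)}, the hypothesis $\alpha\leq\beta$ means $\beta(\omega,x)\subseteq\alpha(\omega,x)$, so monotonicity of $\mu_{(\omega,x)}^{\gamma}$ gives $\mu_{(\omega,x)}^{\gamma}(\alpha(\omega,x))\geq\mu_{(\omega,x)}^{\gamma}(\beta(\omega,x))$, and taking $-\log$ yields the pointwise inequality on information functions; integrating against $\mu$ gives the entropy inequality. For \textbf{(ii)}, I would first observe that $(\alpha\vee\beta)(\omega,x)=\alpha(\omega,x)\cap\beta(\omega,x)$ and then, applying the tower property with $\gamma'=\alpha\vee\gamma$ together with the uniqueness of the disintegration, establish the factorization
\[
\mu_{(\omega,x)}^{\gamma}\bigl(\alpha(\omega,x)\cap\beta(\omega,x)\bigr)=\mu_{(\omega,x)}^{\gamma}(\alpha(\omega,x))\cdot\mu_{(\omega,x)}^{\alpha\vee\gamma}(\beta(\omega,x)).
\]
Taking $-\log$ produces the chain rule at the information-function level, and integration gives it at the entropy level.

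For \textbf{(iv)}, I would use Jensen's inequality applied to the convex function $-x\log x$: conditioning additionally on the finer partition $\gamma$ averages the probabilities $\mu^{\beta}_{(\omega,x)}(\alpha(\omega,x))$ against the coarser $\mu^{\gamma}_{(\omega,x)}(\alpha(\omega,x))$, and convexity forces the finer conditioning to yield a smaller expected value of $-\log$. Item \textbf{(iii)} then follows at once by combining the chain rule from (ii), $H_{\mu}(\alpha\vee\beta|\gamma)=H_{\mu}(\alpha|\gamma)+H_{\mu}(\beta|\alpha\vee\gamma)$, with the inequality $H_{\mu}(\beta|\alpha\vee\gamma)\leq H_{\mu}(\beta|\gamma)$ supplied by (iv) applied to $\alpha\vee\gamma\geq\gamma$.

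The main obstacle is not any individual inequality but the bookkeeping needed to ensure that the canonical systems $\{\mu_{(\omega,x)}^{\gamma}\}$ and $\{\mu_{(\omega,x)}^{\alpha\vee\gamma}\}$ are jointly consistent on $\Omega\times M$ and that every integral in sight is finite; this is precisely where the hypothesis $H_{\mu}(\alpha|\gamma),H_{\mu}(\beta|\gamma)<\infty$ is used to justify Fubini and the interchange of $-\log$ with integration. Once these measurability and integrability checks are carried out on the skew product, each item reduces verbatim to its deterministic analogue in \cite{HuHuaWu2017}.
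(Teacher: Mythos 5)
The paper itself offers no proof of this lemma; it is asserted as being ``derived from \cite{HuHuaWu2017} with slight adaption,'' so your overall strategy---pushing everything back to the tower property and normalized-restriction description of Rokhlin disintegrations on $\Omega\times M$ and then integrating fiber by fiber---is the intended route. Items (i), (ii), and (iii) are sketched correctly: (i) is monotonicity of $\mu^\gamma_{(\omega,x)}$ plus $-\log$; (ii) is the factorization $\mu^\gamma_{(\omega,x)}(\alpha(\omega,x)\cap\beta(\omega,x)) = \mu^\gamma_{(\omega,x)}(\alpha(\omega,x))\cdot\mu^{\alpha\vee\gamma}_{(\omega,x)}(\beta(\omega,x))$, which follows from $\mu^{\alpha\vee\gamma}_{(\omega,x)}$ being the normalized restriction of $\mu^\gamma_{(\omega,x)}$ to $\alpha(\omega,x)$; and (iii) is (ii) together with (iv) applied to $\gamma\leq\alpha\vee\gamma$.

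Your sketch of (iv), however, contains a real confusion, not merely a typo. You invoke ``the convex function $-x\log x$,'' but $\varphi(t)=-t\log t$ is \emph{concave} on $[0,1]$ (its second derivative is $-1/t$). More importantly, the mechanism you describe---averaging $\mu^\gamma_{(\omega,x)}(\alpha(\omega,x))$ via the tower property and then applying Jensen to $-\log$---does not work as stated: in the inner tower integral the atom $\alpha(\omega',x')$ varies with $(\omega',x')$, so Jensen for the convex $-\log$ only bounds $-\log\mu^\beta_{(\omega,x)}(\alpha(\omega,x))$ from above by $\int -\log\mu^\gamma_{(\omega',x')}(\alpha(\omega,x))\,d\mu^\beta_{(\omega,x)}$, which is not $I_\mu(\alpha|\gamma)$ and in any case gives an upper bound on $H_\mu(\alpha|\beta)$, the wrong direction. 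The correct argument first rewrites, for countable $\alpha$ (true for the fiberwise finite partitions the paper uses), $H_\mu(\alpha|\gamma) = \sum_{A\in\alpha}\int_{\Omega\times M}\varphi\bigl(\mu^\gamma_{(\omega,x)}(A)\bigr)\,d\mu$; then applies the tower identity $\mu^\beta_{(\omega,x)}(A)=\int\mu^\gamma_{(\omega',x')}(A)\,d\mu^\beta_{(\omega,x)}(\omega',x')$ with the set $A$ now \emph{fixed}; and uses \emph{concavity} of $\varphi$ to get $\varphi(\mu^\beta_{(\omega,x)}(A))\geq\int\varphi(\mu^\gamma_{(\omega',x')}(A))\,d\mu^\beta_{(\omega,x)}$. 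Summing over $A$ and integrating against $\mu$ then yields $H_\mu(\alpha|\beta)\geq H_\mu(\alpha|\gamma)$. Once (iv) is fixed this way, your derivation of (iii) from (ii) and (iv) is correct.
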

\begin{lemma}\label{lem:info2}
  Given $\mu\in\mathcal{M}_{\mathbf{P}}(\mathcal{F})$. Let $\alpha$, $\beta$ and $\gamma$ be measurable partitions of $\Omega\times M$.
  \begin{enumerate}[label=(\roman*)]
    \item\label{lem:info2_item1}
    \[
      I_{{\mu}}(\beta_0^{n-1}|\gamma)(\omega,x)=I_{{\mu}}(\beta|\gamma)(\omega,x)+\sum_{i=1}^{n-1}I_{{\mu}}(\beta|\Theta^i(\beta_0^{i-1}\vee\gamma))(\Theta^i(\omega,x)),
    \]
    hence
    \[
      H_{{\mu}}(\beta_0^{n-1}|\gamma)=H_{{\mu}}(\beta|\gamma)+\sum_{i=1}^{n-1}H_{{\mu}}(\beta|\Theta^i(\beta_0^{i-1}\vee\gamma));
    \]
    \item\label{lem:info2_item2}
    \begin{align*}
       & I_{{\mu}}(\alpha_0^{n-1}|\gamma)(\omega,x) \\
      =& I_{{\mu}}(\alpha|\Theta^{n-1}\gamma)(\Theta^{n-1}(\omega,x))+\sum_{i=0}^{n-2}I_{{\mu}}(\alpha|\alpha_1^{n-1-i}\vee \Theta^i\gamma)(\Theta^i(\omega,x)),
    \end{align*}
    hence
    \[
      H_{{\mu}}(\alpha_0^{n-1}|\gamma)=H_{{\mu}}(\alpha|\Theta^{n-1}\gamma)+\sum_{i=0}^{n-2}H_{{\mu}}(\alpha|\alpha_1^{n-1-i}\vee \Theta^i\gamma).
    \]
  \end{enumerate}
\end{lemma}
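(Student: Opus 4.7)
The plan is to prove both identities as bookkeeping exercises: iterate the chain rule for conditional information from Lemma \ref{lem:info}(ii) and then convert shifts on the conditioned partition into shifts on the base point using the $\Theta$-invariance of $\mu$. The entropy versions follow immediately by integrating and using $\Theta_{*}\mu=\mu$.

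The one preparatory identity I will establish first is the following change-of-variables formula: for any measurable partitions $\sigma,\tau$ of $\Omega\times M$, any $i\in\mathbb{Z}$, and $\mu$-a.e.\ $z\in\Omega\times M$,
\[
  I_{\mu}(\Theta^{-i}\sigma\,|\,\tau)(z)=I_{\mu}(\sigma\,|\,\Theta^{i}\tau)(\Theta^{i}z).
\]
This is a direct consequence of the fact that, for a $\Theta$-invariant $\mu$, the pushforward of the canonical conditional system satisfies $\Theta_{*}\mu^{\tau}_{z}=\mu^{\Theta\tau}_{\Theta z}$ for $\mu$-a.e.\ $z$; applying this $i$ times and taking $-\log$ yields the displayed equality.

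For part (i), I would telescope $\beta_{0}^{n-1}=\beta\vee\Theta^{-1}\beta\vee\cdots\vee\Theta^{-(n-1)}\beta$ from left to right by repeatedly applying Lemma \ref{lem:info}(ii), obtaining
\[
  I_{\mu}(\beta_{0}^{n-1}|\gamma)(z)=I_{\mu}(\beta|\gamma)(z)+\sum_{i=1}^{n-1}I_{\mu}\bigl(\Theta^{-i}\beta\,\big|\,\beta_{0}^{i-1}\vee\gamma\bigr)(z).
\]
Each summand is then rewritten via the preparatory identity as $I_{\mu}\bigl(\beta\,|\,\Theta^{i}(\beta_{0}^{i-1}\vee\gamma)\bigr)(\Theta^{i}z)$, giving exactly the claimed pointwise formula; integrating against $\mu$ and using $\Theta$-invariance yields the entropy version.

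For part (ii), I would proceed by induction on $n$. The base case $n=1$ is trivial. For the inductive step, split $\alpha_{0}^{n}=\alpha\vee\alpha_{1}^{n}=\alpha\vee\Theta^{-1}(\alpha_{0}^{n-1})$ and apply Lemma \ref{lem:info}(ii) to peel off the $\alpha$-factor as $I_{\mu}(\alpha\,|\,\alpha_{1}^{n}\vee\gamma)$, which is the $i=0$ contribution on the right-hand side. The remaining term $I_{\mu}(\Theta^{-1}(\alpha_{0}^{n-1})\,|\,\gamma)(z)$ equals $I_{\mu}(\alpha_{0}^{n-1}\,|\,\Theta\gamma)(\Theta z)$ by the preparatory identity, and applying the induction hypothesis with $\gamma$ replaced by $\Theta\gamma$ and reindexing $j=i+1$ produces the remaining sum with the correct shifts. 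Integration against the $\Theta$-invariant measure $\mu$ then gives the entropy formula.

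I do not foresee a genuine obstacle in this argument; the only thing that requires care is keeping the indices on $\alpha_{1}^{n-1-i}$ and $\Theta^{i}\gamma$ aligned with the point $\Theta^{i}(\omega,x)$ after each application of the change-of-variables identity, so that the summation range and the conditioning partitions match the statement verbatim.
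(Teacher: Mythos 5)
Your proposal is correct and takes the standard route one would expect: iterate the chain rule from Lemma \ref{lem:info}(ii), then use the change-of-variables identity $I_{\mu}(\Theta^{-i}\sigma\,|\,\tau)(z)=I_{\mu}(\sigma\,|\,\Theta^{i}\tau)(\Theta^{i}z)$, which follows from $\Theta_{*}\mu^{\tau}_{z}=\mu^{\Theta\tau}_{\Theta z}$ for the $\Theta$-invariant measure $\mu$. The paper omits its own proof (citing \cite{HuHuaWu2017}), but your telescoping for (i) and induction for (ii), together with the integration against $\mu$ to pass to entropies, is exactly the argument intended and the indices line up as claimed.
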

\begin{lemma}\label{lem:semicontiofparti}
  Let $\alpha\in\mathcal{P}(\Omega\times M)$ and $\{\zeta_n\}$ be a sequence of increasing measurable partitions of $\Omega\times M$ with $\zeta_n\nearrow\zeta$. Then for $\varphi_n(\omega,x)=I_{{\mu}}(\alpha|\zeta_n)(\omega,x)$, $\varphi^*:=\sup_n\varphi_n\in L^1({\mu})$.
   %where $L^1({\mu})$  is the set of all $L^1$ functions on $\Omega\times M$ with respect to ${\mu}$.
\end{lemma}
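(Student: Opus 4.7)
The plan is to reduce the assertion to the classical Chung--Neveu maximal inequality for the conditional information function of a partition of finite entropy, using only that $\{\mathscr{B}(\zeta_n)\}_n$ is increasing and hence yields a bona fide martingale structure.

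First I would verify $H_{\mu}(\alpha)<\infty$. Since $\alpha\in\mathcal{P}(\Omega\times M)$ is fiberwise finite with cardinalities $K(\omega)$ satisfying $\int K\,d\mathbf{P}<\infty$, we have $H_{\mu_\omega}(\alpha_\omega)\leq\log K(\omega)\leq K(\omega)$, and integration gives $H_{\mu}(\alpha)<\infty$. Enumerating the atoms of $\alpha$ (modulo $\mu$-null sets) as $\{A_i\}_{i\geq 1}$, I would introduce, for each $i$, the martingale
\[
  g_{i,n}(\omega,x):=\mathbb{E}_{\mu}[\mathbf{1}_{A_i}\mid\mathscr{B}(\zeta_n)](\omega,x),
\]
which takes values in $[0,1]$. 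From the definition of conditional information one reads off $\varphi_n(\omega,x)=-\log g_{i,n}(\omega,x)$ on $A_i$, so $\varphi^{*}=\sup_n\bigl(-\log g_{i,n}\bigr)$ on each $A_i$.

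The core estimate is then Doob's maximal inequality applied to the bounded martingale $\{g_{i,n}\}$: for any $t>0$, introducing the stopping time $\tau:=\inf\{n: g_{i,n}<e^{-t}\}$ and applying optional sampling yields
\[
  \mu\bigl(A_i\cap\{\varphi^{*}>t\}\bigr)=\sum_{k}\int_{\{\tau=k\}}g_{i,k}\,d\mu\leq e^{-t}.
\]
Combining this with the trivial bound $\mu(A_i\cap\{\varphi^{*}>t\})\leq\mu(A_i)$ and applying the layer-cake formula with optimal cutoff $t_i:=-\log\mu(A_i)$ gives
\[
  \int_{A_i}\varphi^{*}\,d\mu\leq t_i\mu(A_i)+e^{-t_i}=-\mu(A_i)\log\mu(A_i)+\mu(A_i);
\]
summing in $i$ produces $\int\varphi^{*}\,d\mu\leq H_{\mu}(\alpha)+1<\infty$, which is the desired integrability. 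The only mildly delicate point is securing a measurable at-most-countable labeling of the atoms of $\alpha$ so that the atomwise estimate can be summed, which is routine for fiberwise finite partitions via the Borel structure of $\Omega\times M$; beyond that the argument is entirely classical, and I anticipate no substantive obstacle.
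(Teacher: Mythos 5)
Your proposal is the classical Chung--Neveu maximal inequality argument, which is precisely what the paper intends (it cites the deterministic analogue in \cite{HuHuaWu2017} and omits the proof), and the stopping-time/layer-cake computation you give is correct as far as it goes. However, there is a genuine gap at the very first step, namely the claim that $H_\mu(\alpha)<\infty$ and that the atoms of $\alpha$ can be enumerated $\{A_i\}_{i\geq1}$.

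You write $H_{\mu_\omega}(\alpha_\omega)\leq\log K(\omega)$ and conclude $H_\mu(\alpha)<\infty$ by integrating. But in this paper $I_\mu(\alpha)(\omega,x)=-\log\mu\bigl(\alpha(\omega,x)\bigr)$ is the \emph{global} $\mu$-measure of the atom, so the quantity you have bounded is the fiberwise conditional entropy
\[
  H_\mu(\alpha\mid\sigma_0)=\int_\Omega H_{\mu_\omega}(\alpha_\omega)\,\mathrm{d}\mathbf{P}(\omega),
\]
not $H_\mu(\alpha)$ itself. A fiberwise finite $\alpha\in\mathcal{P}(\Omega\times M)$ need not be a countable partition of $\Omega\times M$: if its atoms are confined to single fibers $\{\omega\}\times M$ and $\mathbf{P}$ is non-atomic, then every atom has $\mu$-measure zero, $I_\mu(\alpha)\equiv+\infty$, $H_\mu(\alpha)=+\infty$, and your enumeration $\{A_i\}$ and the optimal cutoff $t_i=-\log\mu(A_i)$ do not make sense. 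The bound $\int\varphi^*\,\mathrm{d}\mu\leq H_\mu(\alpha)+1$ is then vacuous, so the argument as written does not establish the claim.

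The remedy is to run Chung's argument \emph{conditionally} rather than globally. If each $\zeta_n$ refines the partition $\sigma_0$ into fibers (which is the situation in which the lemma is actually invoked), then $\mu^{\zeta_n}_{(\omega,x)}$ is supported on $\{\omega\}\times M$ and one has $I_\mu(\alpha\mid\zeta_n)(\omega,x)=I_{\mu_\omega}(\alpha_\omega\mid\zeta_{n,\omega})(x)$. Applying your martingale and layer-cake estimate on each fiber, with the finite partition $\alpha_\omega$ of $M$, gives
\[
  \int_M\sup_n I_{\mu_\omega}(\alpha_\omega\mid\zeta_{n,\omega})\,\mathrm{d}\mu_\omega\leq H_{\mu_\omega}(\alpha_\omega)+1\leq\log K(\omega)+1,
\]
and integrating over $\Omega$ using $\int K\,\mathrm{d}\mathbf{P}<\infty$ yields $\varphi^*\in L^1(\mu)$. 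Without some such hypothesis on $\zeta_n$ (or the countability of $\alpha$) the statement itself is delicate, so this is not a cosmetic point; you should either add the fiberwise reduction or state explicitly the countability/finite-entropy hypothesis your enumeration requires.
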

\begin{lemma}\label{lem:contiofparti}
  Let $\alpha\in\mathcal{P}(\Omega\times M)$ and $\{\zeta_n\}$ be a sequence of increasing measurable {partitions of $\Omega\times M$} with $\zeta_n\nearrow\zeta$. Then
  \begin{enumerate}[label=(\roman*)]
    \item $\lim_{n\to\infty}I_{{\mu}}(\alpha|\zeta_n)(\omega,x)=I_{{\mu}}(\alpha|\zeta)(\omega,x)$ for ${\mu}$-a.e. $(\omega,x)$, and
    \item\label{lem:contiofpartiitem2} $\lim_{n\to\infty}H_{{\mu}}(\alpha|\zeta_n)=H_{{\mu}}(\alpha|\zeta)$.
  \end{enumerate}
\end{lemma}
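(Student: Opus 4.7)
The plan is to reduce both parts to the classical increasing martingale convergence theorem for conditional expectations, and then to pass the almost-sure limit through the integral in (ii) by dominated convergence, using as envelope the function produced by Lemma~\ref{lem:semicontiofparti}.

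For part (i), I would first reformulate conditional probabilities of atoms of $\alpha$ as honest conditional expectations of fixed indicator functions. Since $\alpha\in\mathcal{P}(\Omega\times M)$ is fiberwise finite with $\int_\Omega K(\omega)\,d\mathbf{P}<\infty$, one can select, in a measurable way, a countable family $\{A_i\}_{i\in\mathbb{N}}\subset\mathscr{F}\times\mathscr{B}(M)$ of sets that exhausts the atoms of $\alpha$: for each $\omega$ list the elements of $\alpha_\omega$ (padding with $\emptyset$ beyond the $K(\omega)$-th) and let $A_i$ be the union of the slices $\{\omega\}\times(i\text{-th atom of }\alpha_\omega)$. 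Then for $\mu$-a.e.\ $(\omega,x)\in A_i$,
\[
\mu^{\zeta_n}_{(\omega,x)}\bigl(\alpha(\omega,x)\bigr)=\mu^{\zeta_n}_{(\omega,x)}(A_i)=\mathbb{E}_\mu\bigl(\mathbf{1}_{A_i}\,\big|\,\mathscr{B}(\zeta_n)\bigr)(\omega,x),
\]
and similarly with $\zeta$ in place of $\zeta_n$.

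Next, since $\zeta_n\nearrow\zeta$ means (modulo $\mu$-null sets) that $\mathscr{B}(\zeta_n)$ is an increasing filtration whose join equals $\mathscr{B}(\zeta)$, Doob's increasing martingale convergence theorem applied to the bounded variable $\mathbf{1}_{A_i}$ gives, for each fixed $i$,
\[
\mathbb{E}_\mu\bigl(\mathbf{1}_{A_i}\,\big|\,\mathscr{B}(\zeta_n)\bigr)\;\longrightarrow\;\mathbb{E}_\mu\bigl(\mathbf{1}_{A_i}\,\big|\,\mathscr{B}(\zeta)\bigr)\qquad\mu\text{-a.e.}
\]
Discarding the countable union of exceptional null sets, this yields $\mu^{\zeta_n}_{(\omega,x)}(\alpha(\omega,x))\to\mu^{\zeta}_{(\omega,x)}(\alpha(\omega,x))$ for $\mu$-a.e.\ $(\omega,x)$. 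The limit is $\mu$-a.e.\ strictly positive (otherwise $\alpha(\omega,x)$ would be $\mu^\zeta_{(\omega,x)}$-null, contradicting that it contains $(\omega,x)$), so applying $-\log$ preserves the convergence and establishes (i).

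For part (ii), I would integrate the pointwise limit in (i). Lemma~\ref{lem:semicontiofparti} provides $\varphi^*=\sup_n I_\mu(\alpha\mid\zeta_n)\in L^1(\mu)$, which dominates every $I_\mu(\alpha\mid\zeta_n)$, so the dominated convergence theorem gives
\[
H_\mu(\alpha\mid\zeta_n)=\int I_\mu(\alpha\mid\zeta_n)\,d\mu\;\longrightarrow\;\int I_\mu(\alpha\mid\zeta)\,d\mu=H_\mu(\alpha\mid\zeta).
\]
The only genuinely delicate point is the measurable enumeration of the atoms of $\alpha$ and the consequent identification of the ``moving'' conditional probabilities $\mu^{\zeta_n}_{(\omega,x)}(\alpha(\omega,x))$ with conditional expectations of fixed indicator functions to which martingale convergence applies directly; fiberwise finiteness of $\alpha$ is precisely what makes this reduction possible, and once it is in place the rest is the standard martingale-plus-dominated-convergence template.
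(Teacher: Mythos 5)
Your proposal is correct and follows the standard route for this classical fact. The paper itself does not prove Lemma~\ref{lem:contiofparti}; it simply cites \cite{HuHuaWu2017}, and the argument there is precisely what you describe: express $\mu^{\zeta_n}_{(\omega,x)}(\alpha(\omega,x))$ as the conditional expectation of a fixed indicator with respect to the increasing filtration $\mathscr{B}(\zeta_n)\nearrow\mathscr{B}(\zeta)$, apply Doob's increasing martingale convergence theorem for part~(i), and pass to part~(ii) by dominated convergence with the $L^1$ envelope $\varphi^*$ supplied by Lemma~\ref{lem:semicontiofparti}.
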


\section{Unstable Metric Entropy}\label{sec:Meaentropy}

Given $\mu\in\mathcal{M}_{\mathbf{P}}(\mathcal{F})$. In this section we give the definition of unstable metric entropy along $W^u$-foliation. Firstly, we give the definition using Bowen balls. {For any $(\omega,x)\in \Omega\times M$, we always let $d^{u}$ denote the distance induced by the Riemannian structure of $W^u(\omega,x)$. Let $V^u(\mathcal{F},\omega,x,n,\epsilon)$ denote the $d^u_{\omega,n}$-Bowen ball in $W^u(\omega,x)$ with center $x$ and radius $\epsilon$, i.e.,
\[
  V^u(\mathcal{F},\omega,x,n,\epsilon):=\{y\in W^u(\omega,x)\colon d^u_{\omega,n}(x,y)<\epsilon\},
\]
where
\[
   d^u_{\omega,n}(x,y):=\max_{0\leq j\leq n-1}\{ d^{u}(f_\omega^j(x),f^j_\omega(y))\}.
\]}

\begin{definition}\label{def:metricentropy1}
Let $\xi_u\in\mathcal{Q}^u(\Omega\times M)$, i.e., $\xi_u$ is an increasing partition of $\Omega\times M$ subordinate to $W^u$-foliation as in Proposition \ref{prop:specialpartition}. Now we define the \emph{the unstable metric entropy} along $W^u$-foliation as follows,
\[
  h_\mu(\mathcal{F},\xi_u)=\int_\Omega\int_M\lim_{\epsilon\to 0}\limsup_{n\to\infty}-\frac{1}{n}\log{\mu}_{(\omega,x)}^{\xi_u}V^u(\mathcal{F},\omega,x,n,\epsilon)\mathrm{d}{\mu}_\omega(x)\mathrm{d}\mathbf{P}(\omega).
\]

\end{definition}

We denote
\[
  \displaystyle{\lim_{\epsilon\to 0}\limsup_{n\to\infty}-\frac{1}{n}\log{\mu}_{(\omega,x)}^{\xi_u}V(\mathcal{F},\omega,x,n,\epsilon)}
\]
by $h_\mu(\mathcal{F},\omega,x,\xi_u)$, and denote
\[
  \int_Mh(\mathcal{F},\omega,x,\xi_u)\mathrm{d}{\mu}_\omega(x)
\]
by $h_\mu(\mathcal{F},\omega,\xi_u)$. It has been proved in \cite{QianQianXie2003} that $h_\mu(\mathcal{F},\omega,x,\xi_u)$ is $\Theta$-invariant and it is independent of the choice of $\xi_u$. Hence we also denote $\tilde{h}^u_\mu(\mathcal{F})= h_\mu(\mathcal{F},\xi_u)$.

\begin{remark}
  In fact, by the following lemma, we can replace `` $\limsup$'' by `` $\lim$'' {and remove ``$\lim_{\epsilon\to0}$''} in Definition \ref{def:metricentropy1}.
\end{remark}
\begin{lemma}\label{lem:low=up}
  Let $\xi_u\in\mathcal{Q}^u(\Omega\times M)$, then we have
  \[
    h_\mu(\mathcal{F},\omega,x,\xi_u)=\lim_{n\to\infty}-\frac{1}{n}\log{\mu}_{(\omega,x)}^{\xi_u}V^u(\mathcal{F},\omega,x,n,\epsilon),\quad{\mu}\text{-a.e.}(\omega,x).
  \]
\end{lemma}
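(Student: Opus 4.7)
The strategy rests on the uniform expansion along the $W^u$-foliation from Definition \ref{domination}(ii): the pointwise bound $\|Df^1_\omega|_{E^u}\|>\tilde\lambda>1$ (with $\tilde\lambda$ a.s.\ constant by ergodicity of $\theta$) makes $f^{-1}_\omega$ contract $d^u$-distances along $W^u$-leaves by at least a factor $\tilde\lambda^{-1}$. The plan is to (a) use this to compare Bowen balls at different radii, (b) deduce that both $\limsup_n$ and $\liminf_n$ of $-\tfrac{1}{n}\log\mu^{\xi_u}_{(\omega,x)}V^u(\mathcal{F},\omega,x,n,\epsilon)$ are independent of $\epsilon$ for small $\epsilon$, and (c) identify them with $h_\mu(\mathcal{F},\omega,x,\xi_u)$ via a Brin--Katok-type identity.

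For (a), I claim the inclusion
\[
  V^u(\mathcal{F},\omega, x, n, \epsilon_2)\;\subseteq\; V^u(\mathcal{F},\omega, x, n - K, \epsilon_1)
\]
for any $0 < \epsilon_1 < \epsilon_2$ small, $K := \lceil\log(\epsilon_2/\epsilon_1)/\log\tilde\lambda\rceil$, and $n>K$. Indeed, for $y$ in the left-hand side, $d^u(f^{n-1}_\omega x, f^{n-1}_\omega y)<\epsilon_2$, and the uniform contraction of $f^{-1}_\omega$ gives $d^u(f^j_\omega x, f^j_\omega y)\leq\tilde\lambda^{-(n-1-j)}\epsilon_2<\epsilon_1$ whenever $j\leq n-1-K$, so $y$ lies in the right-hand side.

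For (b), combining this with the trivial inclusion $V^u(n,\epsilon_1)\subseteq V^u(n,\epsilon_2)$ sandwiches
\[
  -\tfrac{1}{n}\log\mu^{\xi_u}_{(\omega,x)}V^u(n,\epsilon_1)\;\geq\;-\tfrac{1}{n}\log\mu^{\xi_u}_{(\omega,x)}V^u(n,\epsilon_2)\;\geq\;\tfrac{n-K}{n}\bigl[-\tfrac{1}{n-K}\log\mu^{\xi_u}_{(\omega,x)}V^u(n-K,\epsilon_1)\bigr].
\]
Passing to $\limsup_n$ (resp.\ $\liminf_n$) through this chain forces the quantity at $\epsilon_1$ to equal the one at $\epsilon_2$, so both $\limsup_n$ and $\liminf_n$ of $-\tfrac{1}{n}\log\mu^{\xi_u}_{(\omega,x)}V^u(n,\epsilon)$ are constant in $\epsilon$ for small $\epsilon>0$. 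For (c), the Brin--Katok-type identity for random unstable entropy proved in \cite{QianQianXie2003} ensures $\lim_{\epsilon\to 0}\limsup_n = \lim_{\epsilon\to 0}\liminf_n$ at $\mu$-a.e.\ $(\omega,x)$, which by (b) forces $\limsup_n=\liminf_n$ at any fixed small $\epsilon$. The common value is then $\lim_n$, and by Definition \ref{def:metricentropy1} it coincides with $h_\mu(\mathcal{F},\omega,x,\xi_u)$.

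The main technical care is to check that the inclusion in (a) remains valid when restricted to the support of $\mu^{\xi_u}_{(\omega,x)}$, i.e., inside a single atom $\xi_u(\omega,x)$. Because $\xi_u\in\mathcal{Q}^u(\Omega\times M)$, the atom $\xi_u(\omega,x)$ contains an open $W^u$-neighborhood of $x$, and uniform expansion makes $V^u(n,\epsilon_2)$ shrink to $x$ as $n\to\infty$, so for small $\epsilon_i$ and large $n$ both Bowen balls in (a) lie in $\xi_u(\omega,x)$ and the conditional-measure computations are valid. For non-ergodic $\mu$, one finishes by the usual ergodic decomposition.
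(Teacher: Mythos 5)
Your proof is correct and follows essentially the same route as the paper: both establish the $\epsilon$-independence of $\limsup_n$ and $\liminf_n$ via the uniform-expansion Bowen-ball inclusion (your $V^u(n,\epsilon_2)\subseteq V^u(n-K,\epsilon_1)$ is just the paper's $V^u(n+k,\epsilon)\subset V^u(n,\delta)$ reindexed), and both then invoke the identity $\lim_{\epsilon\to0}\underline{h}=\lim_{\epsilon\to0}\overline{h}$ from \cite{QianQianXie2003} to conclude. The extra care you note about staying inside the atom $\xi_u(\omega,x)$ is implicit in the paper but worth spelling out.
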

\begin{pf}
  Denote
  \begin{equation*}\label{eq:lower}
    \underline{h}(\mathcal{F},\omega,x,\epsilon,\xi_u)=\liminf_{n\to\infty}-\frac{1}{n}\log{\mu}_{(\omega,x)}^{\xi_u}V^u(\mathcal{F},\omega,x,n,\epsilon)
  \end{equation*}
  and
  \begin{equation*}\label{eq:upper}
    \overline{h}(\mathcal{F},\omega,x,\epsilon,\xi_u)=\limsup_{n\to\infty}-\frac{1}{n}\log{\mu}_{(\omega,x)}^{\xi_u}V^u(\mathcal{F},\omega,x,n,\epsilon).
  \end{equation*}
  It has been proved in \cite{QianQianXie2003} that
  \begin{equation*}\label{eq:low=up}
    \lim_{\epsilon\to0}\underline{h}(\mathcal{F},\omega,x,\epsilon,\xi_u)=\lim_{\epsilon\to0}\overline{h}(\mathcal{F},\omega,x,\epsilon,\xi_u).
  \end{equation*}
  Thus we only need to prove both $\overline{h}(\mathcal{F},\omega,x,\epsilon,\xi_u)$ and $\underline{h}(\mathcal{F},\omega,x,\epsilon,\xi_u)$ are independent of $\epsilon$. By Definition \ref{domination}, we know that $\mathcal{F}$ is uniformly expanding restricted to $W^u$-foliation, so for any $0<\delta<\epsilon$, there exists $k>0$ such that
  \[
    V^u(\mathcal{F},\omega,x,k,\epsilon)\subset B^u(\omega,x,\delta),
  \]
  for any $x\in M$, where $B^u(\omega,x,\delta)$ is the ball in $W^u(\omega,x)$ centered at $x$ with radius $\delta$. Hence, for all $n>0$, we have
  \[
    V^u(\mathcal{F},\omega,x,n+k,\epsilon)\subset V^u(\mathcal{F},\omega,x,n,\delta)\subset V^u(\mathcal{F},\omega,x,n,\epsilon).
  \]
  Thus, we get
  \[
    \liminf_{n\to\infty}-\frac{1}{n}\log{\mu}_{(\omega,x)}^{\xi_u}V^u(\mathcal{F},\omega,x,n,\delta)=\liminf_{n\to\infty}-\frac{1}{n}\log{\mu}_{(\omega,x)}^{\xi_u}V^u(\mathcal{F},\omega,x,n,\epsilon),
  \]
  and
  \[
    \limsup_{n\to\infty}-\frac{1}{n}\log{\mu}_{(\omega,x)}^{\xi_u}V^u(\mathcal{F},\omega,x,n,\delta)=\limsup_{n\to\infty}-\frac{1}{n}\log{\mu}_{(\omega,x)}^{\xi_u}V^u(\mathcal{F},\omega,x,n,\epsilon),
  \]
  from which Lemma \ref{lem:low=up} follows.
\end{pf}

Now we consider the definition using {fiberwise finite} partitions.
\begin{definition}\label{def:metricentropy2}
  Given $\mu\in\mathcal{M}_{\mathbf{P}}(\mathcal{F})$. {The conditional entropy of $\mathcal{F}$ for a {fiberwise finite} measurable partition $\alpha$ with respect to} $\eta\in\mathcal{P}^u(\Omega\times M)$ is defined as
  \[
    h_\mu(\mathcal{F},\alpha|\eta)=\limsup_{n\to\infty}\frac{1}{n}H_{{\mu}}(\alpha^{n-1}_0|\eta).
  \]
  {The conditional entropy of $\mathcal{F}$ with respect to $\eta$} is defined as
  \[
    h_{\mu}(\mathcal{F}|\eta)=\sup_{\alpha\in\mathcal{P}(\Omega\times M)}h_\mu(\mathcal{F},\alpha|\eta),
  \]
  and the \emph{conditional entropy} of $\mathcal{F}$ along $W^u$-foliation is defined as
  \[
    {h}^u_\mu(\mathcal{F})=\sup_{\eta\in\mathcal{P}^u(\Omega\times M)}h_{\mu}(\mathcal{F}|\eta).
  \]
\end{definition}

In the following, we give some details on the increasing partitions in $\mathcal{Q}^u(\Omega\times M)$. Denote by $\tilde{d}(\cdot,\cdot)$ the metric on $\Omega$ by which the $\sigma$-algebra $\mathscr{F}$ can be induced. Suppose $\mu\in\mathcal{M}_{\mathbf{P}}(\mathcal{F})$ is ergodic. By Liu and Qian's argument in \cite{LiuQian1995} and Bahnm\"uller and Liu's argument in \cite{BahnmullerLiu1998}, we can choose a set $\tilde{\Lambda}\subset\Lambda$, $(\omega_0,x_0)\in\tilde{\Lambda}$ and positive constants $\hat{\epsilon}$, $\hat{r}$ with
\[
  B_{\tilde{\Lambda}}:=B_{\tilde{\Lambda}}((\omega_0,x_0),\hat{\epsilon}\hat{r}/2)=\{(\omega,x)\in\Omega\times M\colon \tilde{d}(\omega,\omega_0)<\hat{\epsilon}\hat{r}/2\text{, }d(x,x_0)<\hat{\epsilon}\hat{r}/2\}
\]
having positive ${\mu}$ measure such that the following construction of a partition $\xi_u$ satisfies Proposition \ref{prop:specialpartition}.

For each $r\in[\hat{r}/2,\hat{r}]$, put
\[
  S_{u,r}=\bigcup_{(\omega,x)\in B_{\tilde{\Lambda}}}S_u(\omega,x,r),
\]
where $S_u(\omega,x,r)=\{\omega\}\times(W^u_{\text{loc}}(\omega,x)\cap B(x_0,r))$. Then we can define a partition $\hat{\xi}_{u,x_0}$ of $\Omega\times M$ such that
\[
  (\hat{\xi}_{u,x_0})(\omega,y)=
  \begin{cases}
    S_u(\omega,x,r), & {y\in W^u_{\text{loc}}(\omega,x)\cap B(x_0,r) \text{\ for some\ }(\omega,x)\in B_{\tilde{\Lambda}},} \\
    (\Omega\times M)\setminus S_{u,r}, & \mbox{otherwise}.
  \end{cases}
\]
Next we can choose an appropriate $r\in[\hat{r}/2,\hat{r}]$ such that
\[
  \xi_u=\bigvee_{j=0}^\infty\Theta^j\hat{\xi}_{u,x_0}
\]
is subordinate to $W^u$-foliation. We will use the notation $\hat{\xi}_{u,-k}=\bigvee_{j=0}^k\Theta^j\hat{\xi}_{u,x_0}$.

Given $\mu\in\mathcal{M}_{\mathbf{P}}(\mathcal{F})$. Proposition \ref{prop:localvsfinite2} below shows that the two definitions above using Bowen balls and {fiberwise finite} partitions are equivalent when $\mu\in\mathcal{M}_{\mathbf{P}}(\mathcal{F})$ is ergodic. Firstly, we need some lemmas.
\begin{lemma}\label{lem:approach}
  Suppose $\mu$ is an ergodic measure and $\alpha\in\mathcal{P}(\Omega\times M)$ is fiberwise finite. For any $\epsilon>0$, there exists $K>0$ such that for any $k\geq K$,
  \[
    \limsup_{n\to\infty}H_{{\mu}}(\alpha|\alpha^n_1\vee(\hat{\xi}_{u,-k})_1^n)\leq\epsilon.
  \]
\end{lemma}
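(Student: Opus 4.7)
The plan is to show that the ``$\limsup$'' in the statement is in fact a genuine limit which tends to $0$ as $k\to\infty$; both steps will follow from Lemma \ref{lem:contiofparti}(ii).

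First, I would fix $k$ and note that the partitions $\alpha_1^n\vee(\hat\xi_{u,-k})_1^n = \bigvee_{j=1}^n\Theta^{-j}(\alpha\vee\hat\xi_{u,-k})$ form an increasing sequence in $n$ with monotone limit
\[
\beta_k\;:=\;\bigvee_{n=1}^{\infty}\Theta^{-n}(\alpha\vee\hat\xi_{u,-k}).
\]
Because $\alpha\in\mathcal{P}(\Omega\times M)$ is fiberwise finite, Jensen's inequality applied to its fiber cardinalities gives $H_\mu(\alpha)<\infty$, so Lemma \ref{lem:contiofparti}(ii) applies and yields
\[
\limsup_{n\to\infty}H_{\mu}(\alpha|\alpha^n_1\vee(\hat{\xi}_{u,-k})_1^n) \;=\; H_\mu(\alpha|\beta_k).
\]
This already reduces the problem to showing $H_\mu(\alpha|\beta_k)\to 0$ as $k\to\infty$.

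Second, since $\hat\xi_{u,-k}=\bigvee_{j=0}^k\Theta^j\hat\xi_{u,x_0}\nearrow\xi_u$, the partitions $\beta_k$ are themselves increasing in $k$. Exchanging the two monotone joins and invoking the generator property of Proposition \ref{prop:specialpartition}(iii),
\[
\lim_{k\to\infty}\beta_k \;\ge\; \lim_{k\to\infty}\bigvee_{n=1}^{\infty}\Theta^{-n}\hat\xi_{u,-k} \;=\; \bigvee_{n=1}^{\infty}\Theta^{-n}\xi_u \;=\; \varepsilon,
\]
where $\varepsilon$ is the point partition. Since $\varepsilon$ is already the finest partition of $\Omega\times M$, this forces $\beta_k\nearrow\varepsilon$. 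A second application of Lemma \ref{lem:contiofparti}(ii) then gives $H_\mu(\alpha|\beta_k)\to H_\mu(\alpha|\varepsilon)=0$, so we may choose $K$ so that $H_\mu(\alpha|\beta_k)\le\epsilon$ for all $k\ge K$, which is the desired conclusion.

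The only real subtlety I anticipate is the bookkeeping that identifies $\lim_{k}\beta_k$ with $\varepsilon$ rather than some strictly coarser partition; this is precisely where property (iii) of Proposition \ref{prop:specialpartition}, together with the fact that $\xi_u$ is an increasing generator subordinate to the $W^u$-foliation, is used in an essential way.
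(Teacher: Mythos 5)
Your proof is correct, but it takes a genuinely different route from the one in the paper. The paper's argument is ``hands-on'': it splits $\Omega\times M$ into $S_{-k}=\bigcup_{j=0}^k\Theta^jS_{u,r}$ and its complement, uses ergodicity to get $\mu(S_{-k})\to 1$ and, since $\alpha$ is fiberwise finite, arranges $\int_{(\Omega\times M)\setminus S_{-k}}K(\omega)\,\mathrm{d}\mu<\epsilon$ for $k$ large; on $S_{-k}$ one observes that the atom $\bigl(\alpha_1^n\vee(\hat\xi_{u,-k})_1^n\bigr)(\omega,x)$ lies in $W^u_{\text{loc}}(\omega,x)$ and hence is eventually contained in $\alpha(\omega,x)$, so the conditional information goes to $0$ pointwise there, and Lemma~\ref{lem:semicontiofparti} plus dominated convergence finishes the $S_{-k}$ piece. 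Your argument, by contrast, is entirely abstract: you apply the martingale-type convergence Lemma~\ref{lem:contiofparti}(ii) twice, once in $n$ (producing $H_\mu(\alpha|\beta_k)$) and once in $k$, with the generator property $\bigvee_{n\ge1}\Theta^{-n}\xi_u=\varepsilon$ from Proposition~\ref{prop:specialpartition}(iii) doing the work of identifying the limit partition with $\varepsilon$. This is cleaner, avoids any geometric estimate, and in fact shows the $\limsup$ in the statement is a genuine limit. Each approach buys something: the paper's version isolates exactly where ergodicity and fiberwise finiteness of $\alpha$ enter (via $\mu(S_{-k})\to1$ and $\int K\,\mathrm{d}\mu<\infty$) and requires only the subordination of $\hat\xi_{u,-k}$ to local unstable leaves; yours instead leans on the full generator property of $\xi_u$. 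One small caution: your parenthetical claim that ``$H_\mu(\alpha)<\infty$'' by Jensen is not generally true — for a partition refining $\sigma_0$ over a non-atomic $(\Omega,\mathbf{P})$ the unconditional entropy $H_\mu(\alpha)$ can be infinite — but this does not affect your argument, since Lemma~\ref{lem:contiofparti}(ii) is already stated for any $\alpha\in\mathcal{P}(\Omega\times M)$ (the needed integrability comes from Lemma~\ref{lem:semicontiofparti}), so the hypothesis is satisfied without that remark.
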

\begin{pf}
  Denote $S_{-k}=\bigcup_{j=0}^k\Theta^jS_{u,r}$. Let $\epsilon>0$. Since ${\mu}$ is ergodic, we have ${\mu}(S_{-k})\to1$ as $k\to\infty$. {Recall that $\alpha$ is fiberwise finite, hence $\int_{\Omega}K(\omega)\mathrm{d}\mathbf{P}(\omega)<\infty$ where $K(\omega)$ is the cardinality of $\alpha_\omega$. It follows that $\int_{\Omega\times M}K(\omega)\mathrm{d}{\mu}(\omega,x)<\infty$. Thus there exists $K>0$ such that for any $k\geq K$, we have
\[
  \int_{(\Omega\times M)\setminus S_{-k}}K(\omega)\mathrm{d}{\mu}(\omega,x)<\epsilon.
\]
} Then
  \begin{align*}
     & H_{{\mu}}(\alpha|\alpha_1^n\vee(\hat{\xi}_{u,-k})_1^n) \\
    =& \int_{S_{-k}}I_{{\mu}}(\alpha|\alpha_1^n\vee(\hat{\xi}_{u,-k})_1^n)\mathrm{d}{\mu}(\omega,x)+\int_{(\Omega\times M)\setminus S_{-k}}I_{{\mu}}(\alpha|\alpha_1^n\vee(\hat{\xi}_{u,-k})_1^n)\mathrm{d}{\mu}(\omega,x).
  \end{align*}
  For $(\omega,x)\in S_{-k}$, $\alpha_1^n\vee(\hat{\xi}_{u,-k})_1^n(\omega,x)\subset W^u_{\text{loc}}(\omega,x)$. Hence for almost every $(\omega,x)\in S_{-k}$, there exists $N=N(\omega,x)>0$ such that for any $n\geq N$, $\alpha_1^n\vee(\hat{\xi}_{u,-k})_1^n(\omega,x)\subset\alpha(\omega,x)$, which implies that $\log{\mu}_{(\omega,x)}^{\alpha_1^n\vee(\hat{\xi}_{u,-k})_1^n}(\alpha(\omega,x))=0$. {Lemma \ref{lem:semicontiofparti} with $\zeta_n=\alpha_1^n\vee(\hat{\xi}_{u,-k})_1^n$ and Lebesgue's Dominated Convergence Theorem imply that}
  \[
    \limsup_{n\to\infty}\int_{S_{-k}}I_{{\mu}}(\alpha|\alpha_1^n\vee(\hat{\xi}_{u,-k})_1^n)\mathrm{d}{\mu}(\omega,x)=0.
  \]
  {For $(\omega,x)\in(\Omega\times M)\setminus S_{-k}$, we have
  \[
    \int_{\alpha_1^n\vee(\hat{\xi}_{u,-k})_1^n(\omega,x)}-\log{\mu}_{(\omega,x)}^{\alpha_1^n\vee(\hat{\xi}_{u,-k})_1^n}(\alpha(\omega,y))\mathrm{d}{\mu}_{(\omega,x)}^{\alpha_1^n\vee(\hat{\xi}_{u,-k})_1^n}(\omega,y)\leq K(\omega),
  \]
  which implies that
 \begin{align*}
    &\int_{(\Omega\times M)\setminus S_{-k}}-\log{\mu}_{(\omega,x)}^{\alpha_1^n\vee(\hat{\xi}_{u,-k})_1^n}(\alpha(\omega,x))\mathrm{d}{\mu}(\omega,x)\\
    \leq &\int_{(\Omega\times M)\setminus S_{-k}}K(\omega)\mathrm{d}{\mu}(\omega,x)\leq \epsilon.
  \end{align*}}
  Thus we get what we need.
\end{pf}
\begin{lemma}\label{lem:convergence}
  Let $\mu$ be an ergodic measure. Suppose $\eta\in\mathcal{P}^u(\Omega\times M)$ is subordinate to $W^u$-foliation, and $\hat{\xi}_{u,-k}$ is a partition described as above, where $k\in\mathbb{N}\cup\{\infty\}$. Then for almost every $(\omega,x)$, there exists $N=N(\omega,x)>0$ such that for any $j>N$, we have
  \[
    (\hat{\xi}_{u,-k-j}\vee\Theta^j\eta)(\Theta^j(\omega,x))=(\hat{\xi}_{u,-k-j})(\Theta^j(\omega,x)).
  \]
  Hence, for any partition $\beta$ of $\Omega\times M$ with $H_{{\mu}}(\beta|\hat{\xi}_{u,-k})<\infty$,
  \[
    I_{{\mu}}(\beta|\hat{\xi}_{u,-k-j}\vee\Theta^j\eta)(\Theta^j(\omega,x))=I_{{\mu}}(\beta|\hat{\xi}_{u,-k-j})(\Theta^j(\omega,x)),
  \]
  which implies that
  \[
    \lim_{j\to\infty}H_{{\mu}}(\beta|\hat{\xi}_{u,-k-j}\vee\Theta^j\eta)=H_{{\mu}}(\beta|\xi_u).
  \]
  Particularly, if we take $k=\infty$, then the last two equalities become
 \[
    I_{{\mu}}(\beta|\xi_u\vee\Theta^j\eta)(\Theta^j(\omega,x))=I_{{\mu}}(\beta|\xi_u)(\Theta^j(\omega,x)),
  \]
  and
  \[
    \lim_{j\to\infty}H_{{\mu}}(\beta|\xi_u\vee\Theta^j\eta)=H_{{\mu}}(\beta|\xi_u).
  \]
\end{lemma}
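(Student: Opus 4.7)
The plan is to first establish the atom equality geometrically, and then deduce the information and entropy identities as measure-theoretic consequences.

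\emph{Step 1 (reduction to the base point).} Since $\mu$ is $\Theta$-invariant, applying $\Theta^{-j}$ converts the desired atom identity $(\hat{\xi}_{u,-k-j}\vee\Theta^j\eta)(\Theta^j(\omega,x))=\hat{\xi}_{u,-k-j}(\Theta^j(\omega,x))$ into the containment $\bigl(\bigvee_{i=-j}^{k}\Theta^i\hat{\xi}_{u,x_0}\bigr)(\omega,x)\subset\eta(\omega,x)$ at the base point $(\omega,x)$, using $\Theta^{-j}\hat{\xi}_{u,-k-j}=\bigvee_{i=-j}^{k}\Theta^i\hat{\xi}_{u,x_0}$.

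\emph{Step 2 (ergodic visits and backward contraction).} Since $\mu(B_{\tilde{\Lambda}})>0$ and $\mu$ is $\Theta$-ergodic, the Birkhoff ergodic theorem gives that for $\mu$-a.e.\ $(\omega,x)$ the forward visit set $\{m\geq 0:\Theta^m(\omega,x)\in B_{\tilde{\Lambda}}\}$ is unbounded. Set $J(j):=\max\{m\in[0,j]:\Theta^m(\omega,x)\in B_{\tilde{\Lambda}}\}$, so $J(j)\to\infty$ as $j\to\infty$. Choosing $i^{*}:=-J(j)\in[-j,0]\subset[-j,k]$, the factor $(\Theta^{i^{*}}\hat{\xi}_{u,x_0})(\omega,x)=\Theta^{-J(j)}\bigl(S_u(\Theta^{J(j)}(\omega,x),r)\bigr)$ lies in $\{\omega\}\times W^u(\omega,x)$, contains $x$, and by the uniform expansion of Definition \ref{domination}\,(ii) has $d^u$-diameter at most $2r\tilde{\lambda}^{-J(j)}$. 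Because $\eta$ is subordinate to $W^u$, there exists $\rho_0(\omega,x)>0$ with $B^u(\omega,x,\rho_0(\omega,x))\subset\eta_\omega(x)$. Pick $N(\omega,x)$ so large that $2r\tilde{\lambda}^{-J(j)}<\rho_0(\omega,x)$ for every $j\geq N$; then $\bigcap_{i=-j}^{k}(\Theta^i\hat{\xi}_{u,x_0})(\omega,x)$ lies inside this single small disk and hence inside $\eta_\omega(x)$. The case $k=\infty$ is identical, since $\Theta^{-j}\xi_u=\bigvee_{i\geq -j}\Theta^i\hat{\xi}_{u,x_0}$ refines $\bigvee_{i=-j}^{0}\Theta^i\hat{\xi}_{u,x_0}$ and is dominated by the same small disk.

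\emph{Step 3 (information and entropy consequences).} The atom identity forces the canonical conditional measures $\mu^{\hat{\xi}_{u,-k-j}}_{\Theta^j(\omega,x)}$ and $\mu^{\hat{\xi}_{u,-k-j}\vee\Theta^j\eta}_{\Theta^j(\omega,x)}$ to coincide on the common atom, yielding the claimed equality of information functions. For the entropy limit, the upper estimate $H_\mu(\beta|\hat{\xi}_{u,-k-j}\vee\Theta^j\eta)\leq H_\mu(\beta|\hat{\xi}_{u,-k-j})\to H_\mu(\beta|\xi_u)$ follows immediately from monotonicity and Lemma \ref{lem:contiofparti}. For the matching lower estimate, use $\Theta^j$-invariance to rewrite both entropies as integrals of their information functions evaluated at $\Theta^j(\omega,x)$, split these integrals along $\{N(\omega,x)<j\}$ (where the integrands agree pointwise by Step 2) and $\{N\geq j\}$ (whose $\mu$-measure tends to $0$), and dominate the residual tail in $L^1$ via Lemma \ref{lem:semicontiofparti}.

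The main obstacle is Step 2: although $\hat{\xi}_{u,-k-j}$ is not itself subordinate to the $W^u$-foliation, the single factor corresponding to the most recent forward return to $B_{\tilde{\Lambda}}$ produces a backward-contracted unstable disk whose $d^u$-diameter decays exponentially in $J(j)$, and this one factor dominates the whole intersection. Establishing this uniform control for all $j\geq N(\omega,x)$ is the geometric heart of the argument; once the atom containment is in hand, the information-theoretic conclusions reduce to bookkeeping with Lemmas \ref{lem:semicontiofparti} and \ref{lem:contiofparti}.
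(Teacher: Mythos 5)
Your proof is correct and follows essentially the same strategy as the paper's: subordination of $\eta$ gives a small unstable disk inside each atom, ergodicity gives infinitely many forward visits to the base set $B_{\tilde{\Lambda}}$, and uniform backward contraction along $W^u$ pulls a single factor of $\hat{\xi}_{u,-k-j}$ inside that disk, forcing the whole atom into $\eta_\omega(x)$. The only cosmetic difference is in Step~2: you track the most recent return time $J(j)$ and let it drift to infinity, while the paper fixes a single sufficiently large visit time $N=N(\omega,x)$ once and for all (so that $f_{\theta^N\omega}^{-N}$ already shrinks the atom $(\hat{\xi}_{u,x_0})_{\theta^N\omega}(f^N_\omega x)$ below radius $\rho$), then observes that for any $j\geq N$ the partition $\hat{\xi}_{u,-k-j}$ refines $\Theta^{j-N}\hat{\xi}_{u,x_0}$ and hence inherits the same containment. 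Your choice is slightly more elaborate but not wrong. Step~3 is likewise the same as the paper's: the paper runs Fatou on $\varphi_j=(I_\mu(\beta|\hat{\xi}_{u,-k-j}\vee\Theta^j\eta)-I_\mu(\beta|\hat{\xi}_{u,-k-j}))\circ\Theta^j\to 0$ together with monotonicity and Lemma~\ref{lem:contiofparti}, and your splitting along $\{N(\omega,x)<j\}$ plus Lemma~\ref{lem:semicontiofparti} is the same bookkeeping in different clothing.
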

\begin{pf}
  Since $\eta$ is subordinate to $W^u$, for ${\mu}$-a.e. $(\omega,x)$, there exists $\rho=\rho(\omega,x)>0$ such that $B^u(\omega,x,\rho)\subset\eta_\omega(x)$. Since ${\mu}$ is ergodic, for ${\mu}$-a.e. $(\omega,x)$, there are infinitely many $n>0$ such that $\Theta^n(\omega,x)\in S_{u,r}$. Take $N=N(\omega,x)$ large enough such that
  \[
    \Theta^N(\omega,x)\in S_{u,r}
  \]
  and
  \[
   { f^{-N}_{\theta^N\omega}}((\hat{\xi}_{u,x_0})_{\Theta^N\omega}(f^{N}_\omega(x)))\subset B^u(\omega,x,\rho)\subset\eta_\omega(x).
  \]
  Then we have
  \[
    {f^{-j}_{\theta^j\omega}}\Big((f^{j-N}_{\theta^N\omega}(\hat{\xi}_{u,x_0})_{\theta^N\omega})(f^j_\omega(x))\Big)\subset\eta_\omega(x)
  \]
  for any $j\geq N$. Since
  \[
    (\hat{\xi}_{u,-k-j})_{\theta^j\omega}=\bigvee_{l=0}^{k+j}f^l_{\theta^{j-l}\omega}(\hat{\xi}_{u,x_0})_{\theta^{j-l}\omega}\geq f^{j-N}_{\theta^N\omega}(\hat{\xi}_{u,x_0})_{\theta^N\omega},
  \]
  so we have
  \[
    {f^{-j}_{\theta^j\omega}}\Big((\hat{\xi}_{u,-k-j})_{\theta^j\omega}(f^j_\omega(x))\Big)\subset\eta_\omega(x).
  \]
  Thus we have
  \[
    (\hat{\xi}_{u,-k-j})_{\theta^j\omega}(f^j_\omega(x))\subset(f^j_\omega\eta_\omega)(f^j_\omega(x)),
  \]
  which implies that
  \[
    \left((\hat{\xi}_{u,-k-j})_{\theta^j\omega}\vee f^j_\omega\eta_\omega\right)(f^j_\omega(x))=(\hat{\xi}_{u,-k-j})_{\theta^j\omega}(f^j_\omega(x)).
  \]
  We get the first equality.

  By the definition of information function, it is clearly that
  \[
    I_{{\mu}}(\beta|\hat{\xi}_{u,-k-j}\vee\Theta^j\eta)(\Theta^j(\omega,x))=I_{{\mu}}(\beta|\hat{\xi}_{u,-k-j})(\Theta^j(\omega,x)).
  \]
  Now we get
  \[
    I_{{\mu}}(\beta|\hat{\xi}_{u,-k-j}\vee\Theta^j\eta)(\Theta^j(\omega,x))-I_{{\mu}}(\beta|\hat{\xi}_{u,-k-j})(\Theta^j(\omega,x))=0
  \]
    for ${\mu}$-a.e. $(\omega,x)$.
  Let
  \[
    \varphi_j=(I_{{\mu}}(\beta|\hat{\xi}_{u,-k-j}\vee\Theta^j\eta)-I_{{\mu}}(\beta|\hat{\xi}_{u,-k-j}))\circ\Theta^j.
  \]
  Then
  \[
    \lim_{j\to\infty}\varphi_j(\omega,x)=0
  \]
  for ${\mu}$-a.e. $(\omega,x)$. By Fatou's Lemma, we have
  \[
    \liminf_{j\to\infty}\int\varphi_j\mathrm{d}\mu\geq\int\liminf_{j\to\infty}\varphi_j\mathrm{d}\mu=0,
  \]
  which means that
  \[
    \liminf_{j\to\infty}H_{{\mu}}(\beta|\hat{\xi}_{u,-k-j}\vee\Theta^j\eta)\geq \lim_{j\to\infty}H_{{\mu}}(\beta|\hat{\xi}_{u,-k-j}).
  \]
  By Lemma \ref{lem:contiofparti} \ref{lem:contiofpartiitem2} with $\zeta_j=\hat{\xi}_{u,-k-j}$ and $\zeta=\xi_u$, we have
  \[
    \lim_{j\to\infty}H_{{\mu}}(\beta|\hat{\xi}_{u,-k-j})=H_{{\mu}}(\beta|\xi_u).
  \]
  It is clear that $H_{{\mu}}(\beta|\hat{\xi}_{u,-k-j}\vee\Theta^j\eta)\leq H_{{\mu}}(\beta|\hat{\xi}_{u,-k-j})$ for any $j>0$. It follows that
  \[
    \limsup_{j\to\infty}H_{{\mu}}(\beta|\hat{\xi}_{u,-k-j}\vee\Theta^j\eta)\leq \lim_{j\to\infty}H_{{\mu}}(\beta|\hat{\xi}_{u,-k-j}).
  \]
  Now we get the last equality.
\end{pf}
\begin{lemma}(Cf. \cite{QianQianXie2003} and \cite{LiuXie2006})\label{lem:locvsfinite}
 Let $\mu$ be ergodic and $\xi_u\in\mathcal{Q}^u(\Omega\times M)$. Then for ${\mu}$-a.e. $(\omega,x)\in\Omega\times M$,
  \[
    h_\mu(\mathcal{F},\omega,x,\xi_u)=H_{{\mu}}(\xi_u|\Theta\xi_u).
  \]
\end{lemma}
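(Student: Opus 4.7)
The plan is to prove this Brin--Katok-type identity by sandwiching the Bowen ball $V^u(\mathcal{F},\omega,x,n,\epsilon)$ between atoms of refinements of $\xi_u$ along $W^u$, and then extracting the common exponential rate via a conditional Shannon--McMillan--Breiman argument applied to the increasing generator $\xi_u$. The overall scheme is the random adaptation of the Brin--Katok/Ledrappier--Young local entropy formula used in \cite{QianQianXie2003}, and it rests on the uniform expansion of $\mathcal{F}$ along $W^u$ (Definition \ref{domination}, part (ii)) together with the essential constancy of the expansion rate $\tilde\lambda$ (Remark \ref{rmk:domination}(i)).

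First I would pin down the exponential rate on refined atoms of $\xi_u$. Since $\xi_u$ is increasing, the chain rule of Lemma \ref{lem:info}(ii) and the $\Theta$-invariance of $\mu$ telescope into
\[
  I_\mu\bigl(\Theta^{-(n-1)}\xi_u \bigm| \xi_u\bigr)(\omega,x) \;=\; \sum_{i=0}^{n-2} I_\mu\bigl(\Theta^{-1}\xi_u \bigm| \xi_u\bigr)\bigl(\Theta^{i}(\omega,x)\bigr).
\]
The integrability of $I_\mu(\Theta^{-1}\xi_u\mid\xi_u)$ (which is equivalent to finiteness of $H_\mu(\xi_u\mid\Theta\xi_u)$) together with Birkhoff's ergodic theorem then yields
\[
  -\tfrac{1}{n}\log \mu^{\xi_u}_{(\omega,x)}\bigl((\Theta^{-(n-1)}\xi_u)(\omega,x)\bigr) \;\longrightarrow\; H_\mu(\xi_u \mid \Theta\xi_u)
\]
for $\mu$-a.e.\ $(\omega,x)$.

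Next I would carry out the geometric sandwich. Because $\xi_u$ is subordinate to $W^u$, the atom $(\xi_u)_\omega(x)$ contains an open $W^u$-disk of radius $\rho(\omega,x)>0$ for $\mu$-a.e.\ $(\omega,x)$. Choosing $\rho_0>0$ so that the good set $G=\{\rho\geq\rho_0\}$ has positive $\mu$-measure, ergodicity supplies infinitely many returns of $\Theta^j(\omega,x)$ into $G$. Pulling back such a return by $f^{n-1}_\omega$ and using uniform expansion to contract at rate $\tilde\lambda^{-(n-1)}$, I expect to obtain an inclusion
\[
  (\Theta^{-(n-1)}\xi_u)(\omega,x) \;\supset\; V^u(\mathcal{F},\omega,x,n+k,\epsilon)
\]
for some integer shift $k=k(\omega,x,\epsilon)$ independent of $n$. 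The reverse inclusion $V^u(\mathcal{F},\omega,x,n,\epsilon) \subset (\Theta^{-(n-1-k')}\xi_u)(\omega,x)$ follows because, once $\epsilon$ is smaller than the diameter of the $\xi_u$-atoms on $G$, points in the Bowen ball are forced into the same $\xi_u$-atom at the last return time.

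The shifts $k,k'$ are bounded along $\mu$-typical orbits, so they cost only $O(1/n)$ in the exponential rate; combining the two previous steps then gives
\[
  \lim_{n\to\infty}-\tfrac{1}{n}\log \mu^{\xi_u}_{(\omega,x)}\bigl(V^u(\mathcal{F},\omega,x,n,\epsilon)\bigr)\;=\;H_\mu(\xi_u\mid\Theta\xi_u)
\]
for $\mu$-a.e.\ $(\omega,x)$ and every sufficiently small $\epsilon>0$, at which point Lemma \ref{lem:low=up} identifies the left-hand side with $h_\mu(\mathcal{F},\omega,x,\xi_u)$. The hard part will be the geometric sandwich: the inner radius $\rho(\omega,x)$ depends only measurably on $(\omega,x)$ and is not bounded below globally, so the inclusions must be arranged via the good-set/return-time argument above, and in the random setting one must keep the measurability in $\omega$ under control while exploiting that $\tilde\lambda$ is genuinely a positive constant by Remark \ref{rmk:domination}(i).
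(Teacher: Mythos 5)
The paper itself gives no proof of this lemma: it is imported, with the notation of this paper, from \cite{QianQianXie2003} and \cite{LiuXie2006}, so there is no in-paper argument to compare against. Your overall plan is the standard one behind those references and is the right plan: compute the exponential decay rate of $\mu^{\xi_u}_{(\omega,x)}\bigl((\Theta^{-(n-1)}\xi_u)(\omega,x)\bigr)$ by telescoping $I_\mu(\Theta^{-(n-1)}\xi_u\mid\xi_u)$ into a Birkhoff sum of $I_\mu(\Theta^{-1}\xi_u\mid\xi_u)$ (using that $\xi_u$ is increasing and $\mu$ is $\Theta$-invariant, so $H_\mu(\Theta^{-1}\xi_u\mid\xi_u)=H_\mu(\xi_u\mid\Theta\xi_u)$, and assuming its finiteness), then identify that rate with the Bowen-ball rate by a two-sided geometric sandwich, and finally invoke Lemma \ref{lem:low=up}. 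Step one as you wrote it is correct.

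The gap is in the sandwich. Both inclusions you state are of the same type, ``Bowen ball $\subset$ atom'': you write $(\Theta^{-(n-1)}\xi_u)(\omega,x)\supset V^u(\mathcal{F},\omega,x,n+k,\epsilon)$ and then call $V^u(\mathcal{F},\omega,x,n,\epsilon)\subset(\Theta^{-(n-1-k')}\xi_u)(\omega,x)$ the ``reverse inclusion,'' but it is not reversed --- it is the same containment with a different time index. Each of these gives only
\[
h_\mu(\mathcal{F},\omega,x,\xi_u)\;\geq\;H_\mu(\xi_u\mid\Theta\xi_u),
\]
since a smaller set has smaller $\mu^{\xi_u}_{(\omega,x)}$-measure and hence a larger decay rate. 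The opposite inequality, which you never establish, requires the genuine reverse inclusion
\[
\bigl(\Theta^{-(n-1+k)}\xi_u\bigr)(\omega,x)\;\subset\;V^u(\mathcal{F},\omega,x,n,\epsilon)
\]
for some $k=k(\epsilon)$ independent of $n$. This inclusion does \emph{not} come from the good set $G$ at all: it comes from the fact that for $\mu$-a.e.\ $(\omega,x)$ the atom $(\xi_u)_{\theta^{m}\omega}(f^m_\omega x)$ has $d^u$-diameter bounded by a fixed constant (it sits inside a local unstable disk of size $\hat r$ in the explicit construction of $\xi_u$), and that $f^{-1}$ contracts along $W^u$ at rate $\tilde\lambda^{-1}$, so that $(\Theta^{-m}\xi_u)_\omega(x)$ has $d^u_{\omega,n}$-diameter at most $\tilde\lambda^{-(m-n+1)}\cdot\mathrm{const}$. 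Choosing $k$ with $\tilde\lambda^{-k}\cdot\mathrm{const}<\epsilon$ and $m=n-1+k$ gives the inclusion. Your ``pull back and contract at rate $\tilde\lambda^{-(n-1)}$'' intuition is in fact the heuristic for this direction, but you attached it to the wrong containment and to the wrong tool ($G$). So as written the sandwich yields only a one-sided inequality; you need to replace one of your two ``$V^u\subset$ atom'' statements with the shrinking-atom inclusion above. Once that is done, the argument closes exactly as you describe.
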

\begin{proposition}\label{prop:localvsfinite2}
  Suppose $\mu$ is ergodic. Let $\xi_u\in\mathcal{Q}^u(\Omega\times M)$, $\alpha\in\mathcal{P}(\Omega\times M)$ and $\eta\in\mathcal{P}^u(\Omega\times M)$, then
  \[
    h_\mu(\mathcal{F},\omega,x,\xi_u)=\lim_{n\to\infty}\frac{1}{n}H_{{\mu}}(\alpha_0^{n-1}|\eta),
  \]
  for ${\mu}$-a.e. $(\omega,x)\in\Omega\times M$.
\end{proposition}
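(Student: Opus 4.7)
The strategy combines Lemma \ref{lem:locvsfinite} with a Rokhlin-type identity. By Lemma \ref{lem:locvsfinite}, $h_\mu(\mathcal{F},\omega,x,\xi_u)=H_\mu(\xi_u|\Theta\xi_u)$ for $\mu$-a.e.\ $(\omega,x)$; since the right-hand side is a non-random constant, the proposition reduces to proving
\[
\lim_{n\to\infty}\frac{1}{n}H_\mu(\alpha_0^{n-1}|\eta)=H_\mu(\xi_u|\Theta\xi_u).
\]

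I would begin by applying Lemma \ref{lem:info2}\ref{lem:info2_item2} with $\gamma=\eta$ to split
\[
H_\mu(\alpha_0^{n-1}|\eta)=H_\mu(\alpha|\Theta^{n-1}\eta)+\sum_{i=0}^{n-2}H_\mu(\alpha|\alpha_1^{n-1-i}\vee\Theta^i\eta).
\]
The boundary term is at most $H_\mu(\alpha)\le\int\log K(\omega)\,\mathrm{d}\mathbf{P}<\infty$ (finite by fiberwise finiteness of $\alpha$), so contributes zero after dividing by $n$. For the main sum, the key step is to replace $\Theta^i\eta$ by $\xi_u$ in the conditioning: Lemma \ref{lem:convergence} says that for $\mu$-a.e.\ $(\omega,x)$ and all sufficiently large $i$, the atom of $\xi_u\vee\Theta^i\eta$ through $\Theta^i(\omega,x)$ coincides with the atom of $\xi_u$, so the information functions $I_\mu(\alpha|\alpha_1^{n-1-i}\vee\Theta^i\eta)\circ\Theta^i$ and $I_\mu(\alpha|\alpha_1^{n-1-i}\vee\xi_u)\circ\Theta^i$ agree there. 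Combined with the $L^1$ domination from Lemma \ref{lem:semicontiofparti} and Birkhoff's ergodic theorem applied via $\Theta$-invariance of $\mu$, the swap survives integration and the Cesaro average; then Lemma \ref{lem:contiofparti} applied to $\zeta_k=\alpha_1^k\vee\xi_u\nearrow\alpha_1^\infty\vee\xi_u$ yields
\[
\lim_{n\to\infty}\frac{1}{n}H_\mu(\alpha_0^{n-1}|\eta)=H_\mu(\alpha|\alpha_1^\infty\vee\xi_u).
\]

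The remaining task is the Rokhlin-type identity $H_\mu(\alpha|\alpha_1^\infty\vee\xi_u)=H_\mu(\xi_u|\Theta\xi_u)$. I would prove this by computing $H_\mu(\alpha\vee\xi_u|\alpha_1^\infty)$ in two ways using Lemma \ref{lem:info}, then using the $\Theta$-invariance of $\mu$ to identify $H_\mu(\xi_u|\alpha_1^\infty)$ with $H_\mu(\Theta\xi_u|\alpha_0^\infty)$, and finally invoking the generator property $\bigvee_{n\geq 0}\Theta^{-n}\xi_u=\varepsilon$ from Proposition \ref{prop:specialpartition}(iii) to see that $\alpha_0^\infty\vee\xi_u$ separates points, forcing $H_\mu(\alpha|\alpha_0^\infty\vee\xi_u)=0$. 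The main obstacle I anticipate is the double-limit exchange in the middle paragraph: because the indices $i$ and $n-1-i$ inside $H_\mu(\alpha|\alpha_1^{n-1-i}\vee\Theta^i\eta)$ vary together, one must split the summation into ``small $i$'' and ``large $i$'' regimes and exploit the $L^1$ dominant $\varphi^*$ from Lemma \ref{lem:semicontiofparti} to justify uniformly passing the limit through the Cesaro mean.
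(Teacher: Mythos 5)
Your overall scaffolding (reduce to Lemma \ref{lem:locvsfinite}, then prove $\lim_n \frac{1}{n}H_\mu(\alpha_0^{n-1}|\eta)=H_\mu(\xi_u|\Theta\xi_u)$) matches the paper's target, but the central substitution step is not justified and I believe it is wrong as stated. Lemma \ref{lem:convergence} (with $k=\infty$) gives, for $\mu$-a.e.\ $(\omega,x)$ and $i$ large, the atom equality $(\xi_u\vee\Theta^i\eta)(\Theta^i(\omega,x))=\xi_u(\Theta^i(\omega,x))$, i.e.\ $\xi_u$ locally refines $\Theta^i\eta$. This does \emph{not} imply the equality of atoms $(\alpha_1^{n-1-i}\vee\Theta^i\eta)(\Theta^i(\omega,x))=(\alpha_1^{n-1-i}\vee\xi_u)(\Theta^i(\omega,x))$: refining both sides by $\alpha_1^{n-1-i}$ only preserves the containment $(\alpha_1^{n-1-i}\vee\xi_u)(\cdot)\subset(\alpha_1^{n-1-i}\vee\Theta^i\eta)(\cdot)$, so the two information functions need not agree — you only get an inequality, and in the direction that goes against the upper bound you need. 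The mechanism of Lemma \ref{lem:convergence} hinges on the first factor in the join being the (eventually very small) local unstable piece $\hat\xi_{u,-k-j}$ that swallows the $\eta$-atom; replacing it by the unrelated partition $\alpha_1^{n-1-i}$ loses exactly that feature.

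The paper sidesteps this by never applying Lemma \ref{lem:info2} with $\alpha$ as the ``moving'' partition when targeting $\xi_u$. It applies Lemma \ref{lem:info2}\ref{lem:info2_item1} with $\beta=\hat\xi_{u,-k}$ and $\gamma=\eta$, producing sum terms of the form $H_\mu(\hat\xi_{u,-k}|\Theta\hat\xi_{u,-k-j+1}\vee\Theta^j\eta)$, where Lemma \ref{lem:convergence} applies cleanly because the conditioning join is again of the form $\hat\xi_{u,\cdot}\vee\Theta^j\eta$. This gives $\lim_n\frac1n H_\mu((\hat\xi_{u,-k})_0^{n-1}|\eta)=H_\mu(\hat\xi_{u,-k}|\Theta\xi_u)\leq H_\mu(\xi_u|\Theta\xi_u)$. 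The general $\alpha$ is then compared to $(\hat\xi_{u,-k})_0^{n-1}$ by a separate argument using Lemma \ref{lem:info2}\ref{lem:info2_item2} together with Lemma \ref{lem:approach} to bound $\limsup_n\frac1n H_\mu(\alpha_0^{n-1}|(\hat\xi_{u,-k})_0^{n-1})$ by an arbitrary $\epsilon$, and finally the subadditivity $H_\mu(\alpha_0^{n-1}|\eta)\le H_\mu((\hat\xi_{u,-k})_0^{n-1}|\eta)+H_\mu(\alpha_0^{n-1}|(\hat\xi_{u,-k})_0^{n-1})$ closes the upper bound. The lower bound is proved separately by refining $\xi_u$ to $\tilde\xi=\bigvee_{j=0}^N\Theta^{-j}\xi_u$, taking $\alpha<\Theta^{-1}\tilde\xi$, and using the established independence of $h_\mu(\mathcal F,\alpha|\eta)$ from $\alpha$. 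Your proposed Rokhlin-type identity $H_\mu(\alpha|\alpha_1^\infty\vee\xi_u)=H_\mu(\xi_u|\Theta\xi_u)$ for an arbitrary fiberwise finite $\alpha$ would also require its own justification (it is not a standard Rokhlin fact; you would need $\alpha$ to be fine enough to separate unstable plaques, which is exactly what the paper's comparison with $\hat\xi_{u,-k}$ via Lemma \ref{lem:approach} is doing quantitatively). I'd recommend reorganizing around the $\hat\xi_{u,-k}$ approximation rather than trying to drive the whole computation with $\alpha$.
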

\begin{pf}
  The proof is similar to which in \cite{HuHuaWu2017}. Firstly, let us show that $h_{\mu}(\mathcal{F},\alpha|\eta)$ is independent of $\eta$, i.e., for $\eta_1$ and $\eta_2\in\mathcal{P}^u(\Omega\times M)$, we have
  \[
    h_{\mu}(\mathcal{F},\alpha|\eta_1)=h_{\mu}(\mathcal{F},\alpha|\eta_2).
  \]
  In fact, by Lemma \ref{lem:info}, we have
  \begin{align}\label{eq:compute}
    H_{{\mu}}(\alpha_0^{n-1}|\eta_1)+H_{{\mu}}(\eta_2|\alpha_0^{n-1}\vee\eta_1)=& H_{{\mu}}(\alpha_0^{n-1}|\eta_2\vee\eta_1)+H_{{\mu}}(\eta_2|\eta_1), \notag\\
    H_{{\mu}}(\alpha_0^{n-1}|\eta_2)+H_{{\mu}}(\eta_1|\alpha_0^{n-1}\vee\eta_2)=& H_{{\mu}}(\alpha_0^{n-1}|\eta_1\vee\eta_2)+H_{{\mu}}(\eta_1|\eta_2).
  \end{align}
  Because of the construction of $\eta_1$ and $\eta_2$, there exist two fiberwise finite partitions $\alpha_1$ and $\alpha_2$ such that $\eta_j(\omega,x)=\alpha_j(\omega,x)\cap W^u_{\text{loc}}(\omega,x)$, $j=1,2$, for all $(\omega,x)\in \Omega\times M$. Let $N_1(\omega)$ and $N_2(\omega)$ be the cardinality of ${\alpha_1}_\omega$ and ${\alpha_2}_\omega$ respectively. For any $(\omega,x)\in\Omega\times M$, $\eta_1(\omega,x)$ intersects at most $N_2(\omega)$ elements of ${\alpha_2}_\omega$, so intersects at most $N_2(\omega)$ elements of $\eta_2$. Thus, we have
  \[
    \lim_{n\to\infty}\frac{1}{n}H_{{\mu}}(\eta_2|\alpha_0^{n-1}\vee\eta_1)\leq\lim_{n\to\infty}\frac{1}{n}H_{{\mu}}(\eta_2|\eta_1)\leq\lim_{n\to\infty}\frac{1}{n}\int_{\Omega}N_2(\omega)\mathrm{d}\mathbf{P}(\omega)=0.
  \]
  Similarly, we have
  \[
    \lim_{n\to\infty}\frac{1}{n}H_{{\mu}}(\eta_1|\alpha_0^{n-1}\vee\eta_2)\leq\lim_{n\to\infty}\frac{1}{n}H_{{\mu}}(\eta_1|\eta_2)=0.
  \]
  Hence we by \eqref{eq:compute}, we get
  \[
    \limsup_{n\to\infty}\frac{1}{n}H_{{\mu}}(\alpha_0^{n-1}|\eta_1)=\limsup_{n\to\infty}\frac{1}{n}H_{{\mu}}(\alpha_0^{n-1}|\eta_2).
  \]
  Thus $h_{\mu}(\mathcal{F},\alpha|\eta)$ is independent of $\eta$.

  Then we show that $h_{\mu}(\mathcal{F},\alpha|\eta)$ is independent of $\alpha$, that is, for any $\beta$, $\gamma\in\mathcal{P}(\Omega\times M)$,
  \[
    \limsup_{n\to\infty}\frac{1}{n}H_{{\mu}}(\beta_0^{n-1}|\eta)=\limsup_{n\to\infty}\frac{1}{n}H_{{\mu}}(\gamma_0^{n-1}|\eta).
  \]
  In fact, by Lemma \ref{lem:info}, we have
  \begin{equation}\label{eq:betagamma1}
    H_{{\mu}}(\beta_0^{n-1}|\eta)\leq H_{{\mu}}(\gamma_0^{n-1}|\eta)+H_{{\mu}}(\beta_0^{n-1}|\gamma_0^{n-1}\vee\eta),
  \end{equation}
  and similar to the proof of Lemma 2.7 (\romannumeral2) in \cite{HuHuaWu2017}, we can show that
  \begin{equation}\label{eq:betagamma2}
    \lim_{n\to\infty}\frac{1}{n}H_{{\mu}}(\beta_0^{n-1}|\gamma_0^{n-1}\vee\eta)=0.
  \end{equation}
  By \eqref{eq:betagamma1} and \eqref{eq:betagamma2}, we have
  \[
    \limsup_{n\to\infty}\frac{1}{n}H_{{\mu}}(\beta_0^{n-1}|\eta)\leq \limsup_{n\to\infty}\frac{1}{n}H_{{\mu}}(\gamma_0^{n-1}|\eta).
  \]
  By the arbitrariness of $\beta$ and $\gamma$, we obtain
  \[
    \limsup_{n\to\infty}\frac{1}{n}H_{{\mu}}(\beta_0^{n-1}|\eta)=\limsup_{n\to\infty}\frac{1}{n}H_{{\mu}}(\gamma_0^{n-1}|\eta).
  \]

  Now we start the proof of the proposition. By Lemma \ref{lem:info2} \ref{lem:info2_item1}, let $\gamma=\eta$ and $\beta=\hat{\xi}_{u,-k}$, we have that for any $\eta\in\mathcal{P}^u(\Omega\times M)$, $n>0$,
  \[
    \frac{1}{n}H_{{\mu}}((\hat{\xi}_{u,-k})_0^{n-1}|\eta)=\frac{1}{n}H_{{\mu}}(\hat{\xi}_{u,-k}|\eta)+\frac{1}{n}\sum_{j=0}^{n-1}H_{{\mu}}(\hat{\xi}_{u,-k}|\Theta\hat{\xi}_{u,-k-j+1}\vee\Theta^j\eta).
  \]
  By Lemma \ref{lem:convergence}, the right side of above equality converges to $H_{{\mu}}(\hat{\xi}_{u,-k}|\Theta\xi_u)$ as $j\to\infty$. It is clear that each elements of $\eta_\omega$ intersects at most {$2^{k+1}$} elements of $(\hat{\xi}_{u,-k})_\omega$. So we have
  \[
    H_{{\mu}}(\hat{\xi}_{u,-k}|\eta)=\int_{\Omega}H_{{{\mu}}_\omega}(\hat{\xi}_{u,-k}|\eta)\mathrm{d}\mathbf{P}(\omega)\leq\int_\Omega\log{2^{k+1}}\mathrm{d}\mathbf{P}(\omega)=\log{2^{k+1}}.
  \]
  Hence we have
  \[
    \lim_{n\to\infty}\frac{1}{n}H_{{\mu}}(\hat{\xi}_{u,-k}|\eta)=0.
  \]
  Thus we get
  \begin{equation}\label{eq:finitevsmeas}
    \lim_{n\to\infty}\frac{1}{n}H_{{\mu}}((\hat{\xi}_{u,-k})_0^{n-1}|\eta)=H_{{\mu}}(\hat{\xi}_{u,-k}|\Theta\xi_u)\leq H_{{\mu}}(\xi_u|\Theta\xi_u).
  \end{equation}
  Replacing $\gamma$ by $(\hat{\xi}_{u,-k})_0^{n-1}$ in Lemma \ref{lem:info2} \ref{lem:info2_item2} and noticing that
  \[
    \Theta^j(\hat{\xi}_{u,-k})_0^{n-1}=(\hat{\xi}_{u,-k-j})_0^{n-j-1},
  \]
  we have
  \begin{align*}
    H_{{\mu}}(\alpha_0^{n-1}|(\hat{\xi}_{u,-k})_0^{n-1}) &=H_{{\mu}}(\alpha|\hat{\xi}_{u,-n-k+1})+\sum_{j=0}^{n-2}H_{{\mu}}(\alpha|\alpha_1^{n-1-j}\vee(\hat{\xi}_{u,-k-j})_0^{n-1-j}) \\
    &=H_{{\mu}}(\alpha|\hat{\xi}_{u,-n-k+1})+\sum_{j=1}^{n-1}H_{{\mu}}(\alpha|\alpha_1^{j}\vee{\hat{\xi}^j}_{u,-k-n+1+j}) \\
    &\leq H_{{\mu}}(\alpha)+\sum_{j=1}^{n-1}H_{{\mu}}(\alpha|\alpha_1^j\vee(\hat{\xi}_{u,-k})_1^j).
  \end{align*}
  For any $\epsilon>0$, take $k>0$ as in Lemma \ref{lem:approach}, thus we have
  \[
    \limsup_{n\to\infty}H_{{\mu}}(\alpha|\alpha_1^{n-1}\vee(\hat{\xi}_{u,-k})_1^{n-1})\leq \epsilon.
  \]
  Then we get
   \begin{equation}\label{eq:finitevsmeas2}
    \limsup_{n\to\infty}\frac{1}{n}H_{{\mu}}(\alpha_0^{n-1}|(\hat{\xi}_{u,-k})_0^{n-1})\leq\epsilon.
  \end{equation}
  By Lemma \ref{lem:info}, we have
  \begin{equation}\label{eq:finitevsmeas3}
    H_{{\mu}}(\alpha_0^{n-1}|\eta)\leq H_{{\mu}}((\hat{\xi}_{u,-k})_0^{n-1}|\eta)+H_{{\mu}}(\alpha_0^{n-1}|(\hat{\xi}_{u,-k})^{n-1}_0).
  \end{equation}

  Thus, by \eqref{eq:finitevsmeas2}, \eqref{eq:finitevsmeas3}, then by \eqref{eq:finitevsmeas} and Lemma \ref{lem:locvsfinite} we have
  \begin{align*}
    h_\mu(\mathcal{F},\alpha|\eta) &=\limsup_{n\to\infty}\frac{1}{n}H_{{\mu}}(\alpha_0^{n-1}|\eta) \\
    &\leq \lim_{n\to\infty}\frac{1}{n}H_{{\mu}}((\hat{\xi}_{u,-k})_0^{n-1}|\eta)+\epsilon \\
    &{\leq H_{{\mu}}(\xi_u|\Theta\xi_u)+\epsilon} \\
    &=h_\mu(\mathcal{F},\omega,x,\xi_u)+\epsilon.
  \end{align*}
  Since $\epsilon$ is arbitrary, we have $h_\mu(\mathcal{F},\alpha|\eta)\leq h_\mu(\mathcal{F},\omega,x,\xi_u).$

It remains to prove that $h_\mu(\mathcal{F},\alpha|\eta)\geq h_\mu(\mathcal{F},\omega,x,\xi_u).$  Let $\xi_u\in \mathcal{Q}^u(\Omega\times M)$. Since $\xi_u$ is a generator, we can choose $N$ large enough such that the measurable partition $\tilde{\xi}:=\bigvee_{j=0}^N\Theta^{-j}{\xi}_{u}$ has diameter small enough. It is clear that $\tilde{\xi}$ still satisfies the condition of Proposition \ref{prop:specialpartition}, and we know that $h_\mu(\mathcal{F},\tilde{\xi})=h_\mu(\mathcal{F},\xi_u)$.
  So we only need to prove the above inequality for $\tilde{\xi}$.

  We can find a sequence of partitions $\alpha_n\in\mathcal{P}(\Omega\times M)$ such that
  \[
    \mathscr{B}(\alpha_n)\nearrow\mathscr{B}(\Theta^{-1}\tilde{\xi})\text{ as }n\to\infty.
  \]
  So we have
  \[
    \lim_{n\to\infty}H_{{\mu}}(\alpha_n|\tilde{\xi})=H_{{\mu}}(\Theta^{-1}\tilde{\xi}|\tilde{\xi}).
  \]
  Thus,
  \[
    \sup_{\alpha\in\mathcal{P}(\Omega\times M),\alpha<\Theta^{-1}\tilde{\xi}}H_{{\mu}}(\alpha|\tilde{\xi})=H_{{\mu}}(\Theta^{-1}\tilde{\xi}|\tilde{\xi}).
  \]
  For any $\alpha\in\mathcal{P}(\Omega\times M)$ with $\alpha<\Theta^{-1}\tilde{\xi}$, we have that for any $j>0$, $\Theta^j\alpha^{j-1}_0<\Theta^j(\Theta^{-1}\tilde{\xi})^{j-1}_0=\tilde{\xi}$. Then by Lemma \ref{lem:info2} \ref{lem:info2_item1}, we have
  \begin{align*}
    H_{{\mu}}(\alpha_0^{n-1}|\eta) &= H_{{\mu}}(\alpha|\eta)+\sum_{j=1}^{n-1}H_{{\mu}}(\alpha|\Theta^j(\alpha_0^{j-1}\vee\eta)) \\
    &\geq H_{{\mu}}(\alpha|\eta)+\sum_{j=1}^{n-1}H_{{\mu}}(\alpha|\tilde{\xi}\vee\Theta^j\eta).
  \end{align*}
  Then by Lemma \ref{lem:convergence} we have
  \[
    \lim_{j\to\infty}H_{{\mu}}(\alpha|\tilde{\xi}\vee\Theta^j\eta)=H_{{\mu}}(\alpha|\tilde{\xi}),
  \]
  which implies that
  \[
    \limsup_{n\to\infty}\frac{1}{n}H_{{\mu}}(\alpha_0^{n-1}|\eta)\geq\liminf_{n\to\infty}\frac{1}{n}H_{{\mu}}(\alpha_0^{n-1}|\eta)\geq H_{{\mu}}(\alpha|\tilde{\xi}).
  \]
  So we have
  \begin{align*}
    \sup_{\alpha\in\mathcal{P}(\Omega\times M)}h_\mu(\mathcal{F},\alpha|\eta) &\geq\sup_{\alpha\in\mathcal{P}(\Omega\times M),\alpha<\Theta^{-1}\tilde{\xi}}h_\mu(\mathcal{F},\alpha|\eta) \\
    &= \sup_{\alpha\in\mathcal{P}(\Omega\times M),\alpha<\Theta^{-1}\tilde{\xi}}\limsup_{n\to\infty}\frac{1}{n}H_{{\mu}}(\alpha_0^{n-1}|\eta) \\
    &\geq \sup_{\alpha\in\mathcal{P}(\Omega\times M),\alpha<\Theta^{-1}\tilde{\xi}}\liminf_{n\to\infty}\frac{1}{n}H_{{\mu}}(\alpha_0^{n-1}|\eta) \\
    &\geq \sup_{\alpha\in\mathcal{P}(\Omega\times M),\alpha<\Theta^{-1}\tilde{\xi}}H_{{\mu}}(\alpha|\tilde{\xi}) \\
    &=H_{{\mu}}(\Theta^{-1}\tilde{\xi}|\tilde{\xi}).
  \end{align*}
  We have proved that $h_\mu(\mathcal{F},\alpha|\eta)$ is independent of $\alpha$, meaning
  \[
    h_\mu(\mathcal{F},\alpha|\eta)=\sup_{\beta\in\mathcal{P}(\Omega\times M)}h_\mu(\mathcal{F},\beta|\eta)
  \]
    for any $\alpha\in\mathcal{P}(\Omega\times M)$, which implies what we need.
\end{pf}

\begin{potA}
  This can be obtained directly from Proposition \ref{prop:localvsfinite2}.
\end{potA}

\begin{corollary}
  Suppose $\mu$ is ergodic, then for any $\alpha\in\mathcal{P}(\Omega\times M)$ and $\eta\in\mathcal{P}^u(\Omega\times M)$, we have
  \[
    {h}_\mu^u(\mathcal{F})=h_\mu(\mathcal{F},\alpha|\eta)=\lim_{n\to\infty}\frac{1}{n}H_{{\mu}}(\alpha^{n-1}_0|\eta).
  \]
\end{corollary}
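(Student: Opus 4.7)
The plan is to deduce this corollary almost entirely from Proposition~\ref{prop:localvsfinite2} and Theorem~\ref{thm:localvsfinite}; essentially all of the combinatorial work has already been done. First I would invoke Proposition~\ref{prop:localvsfinite2}: for ergodic $\mu$, any $\alpha\in\mathcal{P}(\Omega\times M)$, any $\eta\in\mathcal{P}^u(\Omega\times M)$ and any $\xi_u\in\mathcal{Q}^u(\Omega\times M)$,
\[
\lim_{n\to\infty}\frac{1}{n}H_{\mu}(\alpha_0^{n-1}|\eta)=h_\mu(\mathcal{F},\omega,x,\xi_u)\quad\text{for }\mu\text{-a.e. }(\omega,x).
\]
This single statement does two things at once: it upgrades the $\limsup$ in Definition~\ref{def:metricentropy2} to a genuine limit, and it exhibits the common value as a quantity that depends on neither $\alpha$ nor $\eta$.

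Next, since $h_\mu(\mathcal{F},\omega,x,\xi_u)$ is $\Theta$-invariant and $\mu$ is ergodic, it is $\mu$-a.e.\ equal to a constant, and that constant coincides with its integral $\tilde h^u_\mu(\mathcal{F})$. By Theorem~\ref{thm:localvsfinite} we have $\tilde h^u_\mu(\mathcal{F})=h^u_\mu(\mathcal{F})$, so assembling the equalities yields
\[
h_\mu(\mathcal{F},\alpha|\eta)=\lim_{n\to\infty}\frac{1}{n}H_{\mu}(\alpha_0^{n-1}|\eta)=h^u_\mu(\mathcal{F})
\]
for every admissible $\alpha$ and $\eta$.

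Finally I would close the loop by passing to suprema. Because the previous identity is independent of $\alpha$, the definition
\[
h_\mu(\mathcal{F}|\eta)=\sup_{\alpha\in\mathcal{P}(\Omega\times M)}h_\mu(\mathcal{F},\alpha|\eta)
\]
is a supremum of a constant family, hence equals $h_\mu(\mathcal{F},\alpha|\eta)$; the same reasoning applied to $\eta$ in
\[
h^u_\mu(\mathcal{F})=\sup_{\eta\in\mathcal{P}^u(\Omega\times M)}h_\mu(\mathcal{F}|\eta)
\]
yields the full chain $h^u_\mu(\mathcal{F})=h_\mu(\mathcal{F},\alpha|\eta)=\lim_{n\to\infty}\frac{1}{n}H_\mu(\alpha_0^{n-1}|\eta)$. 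There is no real obstacle beyond keeping track of these identifications; the only mild care needed is to use ergodicity explicitly to collapse the $\Theta$-invariant function $h_\mu(\mathcal{F},\omega,x,\xi_u)$ to a scalar, so that an identity between a scalar quantity (like $h^u_\mu(\mathcal{F})$) and an a.e.\ defined pointwise limit makes sense.
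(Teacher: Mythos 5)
Your argument is correct and matches the paper's intent: the paper's proof of this corollary is a one-liner ("directly from Proposition \ref{prop:localvsfinite2}"), and your write-up simply unpacks that — Proposition \ref{prop:localvsfinite2} gives a genuine limit equal to the $\Theta$-invariant function $h_\mu(\mathcal{F},\omega,x,\xi_u)$, ergodicity collapses it to the constant $\tilde{h}^u_\mu(\mathcal{F})$, and Theorem \ref{thm:localvsfinite} identifies that constant with $h^u_\mu(\mathcal{F})$. The only mild redundancy is that your final "passing to suprema" step re-derives $h^u_\mu(\mathcal{F})=h_\mu(\mathcal{F},\alpha|\eta)$, which you already had from step 2 via Theorem \ref{thm:localvsfinite}; either route alone suffices.
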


\begin{pf}
 We can get the result directly from Proposition \ref{prop:localvsfinite2}.
\end{pf}

The following lemmas are useful for the proof of the variational principle in Section \ref{sec:VP}, whose proofs are completely parallel to those in \cite{HuHuaWu2017}, so we omit them.

\begin{lemma}\label{lem:affine}
  For any $\alpha\in\mathcal{P}(\Omega\times M)$ and $\eta\in\mathcal{P}^u(\Omega\times M)$, the map ${\mu}\mapsto H_{{\mu}}(\alpha|\eta)$ from $\mathcal{M}_{\mathbf{P}}(\mathcal{F})$ to $\mathbb{R}^+\cup \{0\}$ is concave. Moreover, the map $\mu\mapsto h_\mu^u(\mathcal{F})$ from $\mathcal{M}_{\mathbf{P}}(\mathcal{F})$ to $\mathbb{R}^+\cup \{0\}$ is affine.
\end{lemma}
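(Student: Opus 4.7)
My plan has two parts. First, establish the concavity of $\mu \mapsto H_\mu(\alpha|\eta)$ via a Jensen-type argument. Second, combine this with a matching upper bound to obtain affinity of $\mu \mapsto h^u_\mu(\mathcal{F})$, first pointwise in $(\alpha,\eta)$ and then, via a refinement argument, after passing to the supremum defining $h^u_\mu$.

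For the concavity, I would write $\mu = p\mu_1 + (1-p)\mu_2$ and note that since $\mu_i \ll \mu$ with densities $\rho_i := d\mu_i/d\mu$, the canonical conditional measures decompose as a $\mu$-almost-sure convex combination
\[
\mu^{\eta}_{(\omega,x)} = p\, c_1(\omega,x)\, (\mu_1)^{\eta}_{(\omega,x)} + (1-p)\, c_2(\omega,x)\, (\mu_2)^{\eta}_{(\omega,x)}
\]
with $\eta$-measurable normalizing weights $c_i$ built from $\rho_i$. Applying Jensen's inequality to the concave function $\phi(t) = -t\log t$ summed over the atoms of $\alpha_\omega$ and integrating against $\mu$ then yields
\[
H_\mu(\alpha|\eta) \geq p\, H_{\mu_1}(\alpha|\eta) + (1-p)\, H_{\mu_2}(\alpha|\eta),
\]
which is the first half of the lemma.

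For the second half, I would first establish the matching upper bound
\[
H_\mu(\alpha|\eta) \leq p\, H_{\mu_1}(\alpha|\eta) + (1-p)\, H_{\mu_2}(\alpha|\eta) + \log 2
\]
by lifting everything to the disjoint union $(\Omega\times M) \sqcup (\Omega\times M)$ equipped with the measure $p\mu_1 \sqcup (1-p)\mu_2$ and pulling back $\alpha, \eta$; the two-element partition labeling which copy one sits in contributes at most $h(p) \leq \log 2$ of overhead. Replacing $\alpha$ by $\alpha_0^{n-1}$, dividing by $n$, and taking the limsup as $n \to \infty$ erases the $(\log 2)/n$ term, yielding the pointwise affinity
\[
h_\mu(\mathcal{F}, \alpha|\eta) = p\, h_{\mu_1}(\mathcal{F}, \alpha|\eta) + (1-p)\, h_{\mu_2}(\mathcal{F}, \alpha|\eta)
\]
for every fixed $\alpha \in \mathcal{P}(\Omega\times M)$ and $\eta \in \mathcal{P}^u(\Omega\times M)$. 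Convexity of $\mu \mapsto h^u_\mu(\mathcal{F})$ is then immediate, as a pointwise supremum of affine functions is convex. For the reverse direction (concavity), I would fix $\varepsilon > 0$, pick $(\alpha_i, \eta_i)$ with $h_{\mu_i}(\mathcal{F}, \alpha_i|\eta_i) \geq h^u_{\mu_i}(\mathcal{F}) - \varepsilon$, and set $\alpha := \alpha_1 \vee \alpha_2$. The step inside the proof of Proposition \ref{prop:localvsfinite2} that shows $h_\mu(\mathcal{F}, \alpha|\eta)$ is independent of $\eta \in \mathcal{P}^u$ rests only on the finite-information bound $H_\mu(\eta_2|\eta_1) \leq \int N_2(\omega)\,d\mathbf{P}(\omega) < \infty$ and is not sensitive to ergodicity, so I may evaluate at any single $\eta \in \mathcal{P}^u$ and conclude, using monotonicity in $\alpha$,
\[
h^u_\mu(\mathcal{F}) \geq h_\mu(\mathcal{F}, \alpha|\eta) = p\, h_{\mu_1}(\mathcal{F}, \alpha|\eta) + (1-p)\, h_{\mu_2}(\mathcal{F}, \alpha|\eta) \geq p\, h^u_{\mu_1}(\mathcal{F}) + (1-p)\, h^u_{\mu_2}(\mathcal{F}) - \varepsilon.
\]
Letting $\varepsilon \to 0$ completes the affinity.

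The hardest step will be verifying the matching upper bound rigorously in the random conditional setting: the disjoint-union lifting trick is standard for finite partitions but needs care when $\eta$ is a measurable partition subordinate to the $W^u$-foliation, to ensure that the pullback of $\eta$ remains in $\mathcal{P}^u(\Omega\times M)$ and that the Radon-Nikodym weights interact well with the skew-product cocycle $\Theta$. Once this bookkeeping is settled, the rest is a faithful transcription of the deterministic arguments in \cite{HuHuaWu2017}.
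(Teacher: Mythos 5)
Your overall architecture---Jensen concavity of $H_\mu(\alpha|\eta)$, a matching $\log 2$ upper bound via the doubled (disjoint-union) system, pointwise affinity of $h_\mu(\mathcal{F},\alpha|\eta)$, convexity of $h^u_\mu$ as a supremum of affine functions, and concavity via the refinement $\alpha=\alpha_1\vee\alpha_2$ together with the $\eta$-independence extracted from Proposition~\ref{prop:localvsfinite2}---is exactly the route taken in \cite{HuHuaWu2017}, to which the paper defers without giving the proof. Your observation that the $\eta$-independence argument there rests only on the finite bound $H_\mu(\eta_2|\eta_1)\le\int N_2\,\mathrm{d}\mathbf{P}$ and does not use ergodicity is correct and is the key to making the refinement step work for non-ergodic $\mu$. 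The disjoint-union bookkeeping is less delicate than you fear: the conditional-entropy identity on $\Omega\times M\times\{1,2\}$ does not require the pullback of $\eta$ to belong to $\mathcal{P}^u$ of the doubled space; it is a purely measure-theoretic manipulation through the factor map $\pi$, and the only dynamical input needed is that the doubled skew product (acting trivially on the index set) intertwines with $\pi$.

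There is, however, a real gap at the ``pointwise affinity'' step that you do not flag. From the sandwich
\[
p\,H_{\mu_1}(\alpha_0^{n-1}|\eta)+(1-p)\,H_{\mu_2}(\alpha_0^{n-1}|\eta)\;\le\;H_\mu(\alpha_0^{n-1}|\eta)\;\le\;p\,H_{\mu_1}(\alpha_0^{n-1}|\eta)+(1-p)\,H_{\mu_2}(\alpha_0^{n-1}|\eta)+\log 2,
\]
dividing by $n$ and taking $\limsup$ gives only the convexity inequality $h_\mu(\mathcal{F},\alpha|\eta)\le p\,h_{\mu_1}(\mathcal{F},\alpha|\eta)+(1-p)\,h_{\mu_2}(\mathcal{F},\alpha|\eta)$, because $\limsup$ is subadditive but not superadditive: $\limsup_n\bigl(p b_n/n+(1-p)c_n/n\bigr)$ may be strictly less than $p\limsup_n b_n/n+(1-p)\limsup_n c_n/n$ if the two sequences oscillate out of phase. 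Thus the reverse inequality, which is precisely what feeds your concavity argument for $h^u_\mu$ via $h^u_\mu\ge h_\mu(\mathcal{F},\alpha|\eta)\ge p\,h_{\mu_1}(\mathcal{F},\alpha|\eta)+(1-p)\,h_{\mu_2}(\mathcal{F},\alpha|\eta)$, does not follow from taking limsups. To close the gap one must establish that $\frac1n H_\mu(\alpha_0^{n-1}|\eta)$ actually converges for every $\mu\in\mathcal{M}_{\mathbf{P}}(\mathcal{F})$, not just for ergodic $\mu$. Note that the paper only proves convergence for ergodic $\mu$ (Proposition~\ref{prop:localvsfinite2}), and the convergence lemma it rests on (Lemma~\ref{lem:convergence}) explicitly uses ergodicity to guarantee recurrence of a.e.\ $(\omega,x)$ to $S_{u,r}$. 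The reference \cite{HuHuaWu2017} proves the existence of the limit via a separate monotonicity/martingale argument tied to the specific structure of $\eta\in\mathcal{P}^u$; your write-up needs an analogous step before the pointwise affinity, and hence the affinity of $h^u_\mu$, can be concluded.
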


\begin{lemma}\label{lem:semiconti}
  Let $\mu\in\mathcal{M}_{\mathbf{P}}(\mathcal{F})$ and {$\eta\in \mathcal{P}^u(\Omega\times M)$.} Assume that there exists a sequence of partitions $\{\beta_n\}_{n=1}^{\infty}\subset\mathcal{P}(\Omega\times M)$ such that $\beta_1<\beta_2<\cdots<\beta_n<\cdots$ and {$\mathscr{B}(\beta_n)\nearrow\mathscr{B}(\eta)$}, {and moreover, ${\mu}_\omega(\partial(\beta_n)_\omega)=0$, for $n=1,2,\cdots$ and $\mathbf{P}$-a.e. $\omega\in\Omega$. Let $\alpha\in\mathcal{P}(\Omega\times M)$ satisfy ${\mu}_\omega(\partial\alpha_\omega)=0$ for $\mathbf{P}$-a.e. $\omega\in\Omega$.} Then for $\mathbf{P}$-a.e. $\omega\in\Omega$, the function ${\mu}'\mapsto H_{{\mu}'}(\alpha|\eta)$ is upper semi-continuous at ${\mu}$, i.e.,
  \[
    \limsup_{{\mu}'\to{\mu}}H_{{\mu}'}(\alpha|\eta)\leq H_{{\mu}}(\alpha|\eta).
  \]
  Moreover, the function $\mu'\mapsto h_{\mu'}^u(\mathcal{F})$ is upper semi-continuous at $\mu$, i.e.,
  \[
    \limsup_{{\mu}'\to\mu}h_{\mu'}^u(\mathcal{F})\leq h_{\mu}^u(\mathcal{F}).
  \]
\end{lemma}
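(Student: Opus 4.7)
My plan is in two parts, mirroring the two statements of the lemma. First I would establish upper semi-continuity of $\mu'\mapsto H_{\mu'}(\alpha|\eta)$ at $\mu$; then I would bootstrap to $\mu'\mapsto h^u_{\mu'}(\mathcal{F})$ by applying the first part to the refined partitions $\alpha_0^{N-1}=\bigvee_{i=0}^{N-1}\Theta^{-i}\alpha$.

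For the conditional-entropy statement, the idea is to sandwich $H_{\mu'}(\alpha|\eta)$ between a $\mu'$-continuous approximant and its correct limit. Since $\mathscr{B}(\beta_n)\subset\mathscr{B}(\eta)$, Lemma \ref{lem:info}(iv) gives $H_{\mu'}(\alpha|\eta)\leq H_{\mu'}(\alpha|\beta_n)$ for every $n$ and every $\mu'\in\mathcal{M}_{\mathbf{P}}(\mathcal{F})$. For fixed $n$ both $\alpha$ and $\beta_n$ are fiberwise finite with $\mu_\omega$-null topological boundaries, so writing
\[
  H_{\mu'}(\alpha|\beta_n)=\int_\Omega H_{\mu'_\omega}(\alpha_\omega|(\beta_n)_\omega)\,\mathrm{d}\mathbf{P}(\omega)
\]
as an integral of finite $-p\log p$ sums in the atom masses of $\alpha_\omega\vee(\beta_n)_\omega$, a fiberwise Portmanteau argument together with dominated convergence (the integrand is bounded by $\log(K_\alpha(\omega)K_{\beta_n}(\omega))\in L^1(\mathbf{P})$) shows that $\mu'\mapsto H_{\mu'}(\alpha|\beta_n)$ is continuous at $\mu$. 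Taking $\limsup_{\mu'\to\mu}$ in the sandwich inequality and then letting $n\to\infty$, with Lemma \ref{lem:contiofparti}(ii) driving the right-hand side to $H_\mu(\alpha|\eta)$, delivers the desired semi-continuity.

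For the entropy statement I would apply the first part to $\alpha_0^{N-1}$; the boundary hypothesis lifts since $\partial(\Theta^{-i}\alpha)_\omega\subset(f_\omega^i)^{-1}(\partial\alpha_{\theta^i\omega})$, which is $\mu_\omega$-null by the $\Theta$-invariance identity $(f_\omega^i)_*\mu_\omega=\mu_{\theta^i\omega}$. Thus $\mu'\mapsto\tfrac{1}{N}H_{\mu'}(\alpha_0^{N-1}|\eta)$ is upper semi-continuous at $\mu$ for every $N$. Given $\varepsilon>0$ one would then pick $N$ with $\tfrac{1}{N}H_\mu(\alpha_0^{N-1}|\eta)\leq h^u_\mu(\mathcal{F})+\varepsilon$, using the Corollary following Proposition \ref{prop:localvsfinite2} in the ergodic case and extending via ergodic decomposition and the affine property of Lemma \ref{lem:affine} in general, and conclude provided one has a matching bound $h^u_{\mu'}(\mathcal{F})\leq\tfrac{1}{N}H_{\mu'}(\alpha_0^{N-1}|\eta)$ valid uniformly for $\mu'$ near $\mu$. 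This last bound is the main obstacle, since $h^u_{\mu'}(\mathcal{F})$ is defined as a double supremum and the specific pair $(\alpha,\eta)$ is not obviously optimal for $\mu'$; I expect to resolve it by integrating along the ergodic decomposition of $\mu'$ (on each ergodic component the Corollary after Proposition \ref{prop:localvsfinite2} identifies $h^u$ with $h(\cdot,\alpha|\eta)$ for any admissible $\alpha,\eta$) and recombining via the affine property of Lemma \ref{lem:affine}.
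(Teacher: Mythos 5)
Your first part is sound: the sandwich $H_{\mu'}(\alpha|\eta)\le H_{\mu'}(\alpha|\beta_n)$ via Lemma \ref{lem:info}(iv), continuity of each $\mu'\mapsto H_{\mu'}(\alpha|\beta_n)$ at $\mu$ by the fiberwise Portmanteau argument under the $\mu_\omega$-null boundary hypothesis, and then Lemma \ref{lem:contiofparti}(ii) on the right to send $H_\mu(\alpha|\beta_n)\to H_\mu(\alpha|\eta)$ --- this is exactly the intended argument, and it is correct. The lifting of the null-boundary hypothesis to $\alpha_0^{N-1}$ via $(f^i_\omega)_*\mu_\omega=\mu_{\theta^i\omega}$ is also correct.

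The gap is precisely where you say it is, and your proposed repair does not close it. You need, for $\mu'$ near $\mu$ and a fixed $N$, the bound $h^u_{\mu'}(\mathcal{F})\le\frac{1}{N}H_{\mu'}(\alpha_0^{N-1}|\eta)$. Your plan is to integrate over the ergodic decomposition $\mu'=\int\nu\,d\tau'(\nu)$: using the affine property of $\nu\mapsto h^u_\nu(\mathcal{F})$ and the concavity of $\nu\mapsto H_\nu(\alpha_0^{N-1}|\eta)$, this reduces to showing, for each ergodic $\nu$, that $h^u_\nu(\mathcal{F})\le\frac{1}{N}H_\nu(\alpha_0^{N-1}|\eta)$. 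But the Corollary after Proposition \ref{prop:localvsfinite2} only gives $h^u_\nu(\mathcal{F})=\lim_n\frac{1}{n}H_\nu(\alpha_0^{n-1}|\eta)$; a limit is not automatically dominated by any particular term of the sequence. So the ergodic decomposition merely transports the missing inequality to each ergodic component --- it does not produce it. What is actually needed is the monotonicity (equivalently, subadditivity) of $n\mapsto H_\mu(\alpha_0^{n-1}|\eta)$, which turns the limit into $\inf_n\frac{1}{n}H_\mu(\alpha_0^{n-1}|\eta)$ and then the desired inequality holds for every $N$ and every $\mu'$.

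Establishing that subadditivity is non-trivial and uses the special structure of $\eta\in\mathcal{P}^u(\Omega\times M)$: in the chain $H_\mu(\alpha_0^{m+n-1}|\eta)=H_\mu(\alpha_0^{n-1}|\eta)+H_\mu(\Theta^{-n}\alpha_0^{m-1}|\alpha_0^{n-1}\vee\eta)$ one must argue that $\alpha_0^{n-1}\vee\eta$ refines $\Theta^{-n}\eta$ modulo $\mu$ (so that the second term is $\le H_\mu(\Theta^{-n}\alpha_0^{m-1}|\Theta^{-n}\eta)=H_\mu(\alpha_0^{m-1}|\eta)$ by $\Theta$-invariance). This relies on $\eta$ being cut from $\alpha$ by local unstable plaques and on the uniform expansion along $W^u$ to keep the pushed-forward pieces inside local unstable plaques after intersecting with the $\alpha$-cells. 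This is the content of the corresponding lemma in \cite{HuHuaWu2017} that the paper implicitly invokes, and it is the piece your proposal is missing.
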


\section{Shannon-McMillan-Breiman Theorem}\label{sec:SMBThm}

In this section, we give a proof of Theorem \ref{thm:SMB}. We follow the method in \cite{HuHuaWu2017} to prove it, via which Hu, Hua and Wu give a version of Shannon-McMillan-Breiman Theorem for unstable metric entropy in deterministic case. Firstly, we need the following lemmas. {In this section, we always suppose that $\mu\in\mathcal{M}_{\mathbf{P}}(\mathcal{F})$ is ergodic.}

\begin{lemma}\label{lem:SMBlow}
  Let $\alpha\in\mathcal{P}(\Omega\times M)$, $\eta\in\mathcal{P}^u(\Omega\times M)$. Then for any $\xi\in\mathcal{Q}^u(\Omega\times M)$, we have
  \[
     {h_\mu(\mathcal{F},\alpha|\eta)}\leq\liminf_{n\to\infty}\frac{1}{n}I_{{\mu}}(\alpha_0^{n-1}|\xi)(\omega,x)\quad{\mu}\text{-a.e. }(\omega,x).
  \]
\end{lemma}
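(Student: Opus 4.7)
The plan is to relate $I_\mu(\alpha_0^{n-1}|\xi)$ to $-\log\mu^\xi_{(\omega,x)}$ of a dynamical ball on the unstable leaf through a geometric inclusion, and then to apply the Brin--Katok type identity (Lemma \ref{lem:low=up}) and the ergodic identification from Proposition \ref{prop:localvsfinite2}. The key observation is that, although a fiberwise finite partition $\alpha$ may have large diameter along $W^u$-leaves, its forward refinement $\alpha_0^{m-1}$ contracts exponentially along unstable leaves thanks to the uniform expansion of $\mathcal{F}$ on $W^u$ built into Definition \ref{domination}.

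\textbf{Geometric inclusion.} Given $\epsilon>0$, choose $m=m(\epsilon)\in\mathbb{N}$ with $\tilde\lambda^{-(m-1)}\mathrm{diam}(M)<\epsilon$. For $\mu$-a.e.\ $(\omega,x)$ one then has
\[
\alpha_0^{m-1}(\omega,x)\cap W^u_{\text{loc}}(\omega,x)\subset B^u(\omega,x,\epsilon),
\]
because any $y$ on the left lies on the same unstable leaf as $x$, while $f^{m-1}_\omega y$ and $f^{m-1}_\omega x$ sit in a common $\alpha$-atom, so their $d$-distance (hence, locally, $d^u$-distance) is at most $\mathrm{diam}(M)$; backward iteration of the uniform expansion then contracts this by $\tilde\lambda^{-(m-1)}$. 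Applying this observation at each iterate $\Theta^j(\omega,x)$ for $j=0,\dots,n-1$ and using $(\alpha_0^{m-1})_0^{n-1}=\alpha_0^{n+m-2}$ together with the subordination $\xi_\omega(x)\subset W^u_{\text{loc}}(\omega,x)$, one obtains
\[
\alpha_0^{n+m-2}(\omega,x)\cap\xi_\omega(x)\subset V^u(\mathcal{F},\omega,x,n,\epsilon).
\]

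\textbf{Passage to the limit.} Taking $-\log\mu^\xi_{(\omega,x)}$ in the inclusion yields
\[
I_\mu(\alpha_0^{n+m-2}|\xi)(\omega,x)\geq -\log\mu^\xi_{(\omega,x)}V^u(\mathcal{F},\omega,x,n,\epsilon).
\]
Divide by $n+m-2$, use $\tfrac{n}{n+m-2}\to 1$ as $n\to\infty$ with $m$ fixed, and take $\liminf$ to deduce
\[
\liminf_{N\to\infty}\tfrac{1}{N}I_\mu(\alpha_0^N|\xi)(\omega,x)\geq\liminf_{n\to\infty}\tfrac{1}{n}\bigl(-\log\mu^\xi_{(\omega,x)}V^u(\mathcal{F},\omega,x,n,\epsilon)\bigr).
\]
Letting $\epsilon\to 0$ and invoking Lemma \ref{lem:low=up}, the right-hand side is bounded below by $h_\mu(\mathcal{F},\omega,x,\xi)$, and by Proposition \ref{prop:localvsfinite2} this equals $h_\mu(\mathcal{F},\alpha|\eta)$ for $\mu$-a.e.\ $(\omega,x)$.

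\textbf{Main obstacle.} The only delicate point is making the geometric inclusion hold uniformly along the orbit segment: the backward-contraction argument demands that each $f^j_\omega y$ actually lie in a local unstable manifold of $f^j_\omega x$ of positive uniform size $\rho$, so that the estimate $d^u\le \rho$ can be propagated back to $(\omega,x)$. This is handled using $\theta$-essential constancy of $\tilde\lambda$, compactness of $M$, and the construction of $\xi_u$ from Proposition \ref{prop:specialpartition}, which provides a set of positive measure on which $W^u_{\text{loc}}$ has uniform size; ergodicity of $\mu$ under $\Theta$ then spreads this to a full-measure set via a Poincar\'e recurrence argument.
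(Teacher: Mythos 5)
Your overall strategy mirrors the paper's: establish a geometric inclusion of a refined atom into an unstable Bowen ball, pass to the conditional information function, and conclude via Lemma \ref{lem:low=up} and Proposition \ref{prop:localvsfinite2}. The difference lies in how you set up the inclusion. The paper picks an $\omega$-dependent depth $k=k(\omega)$ so that $\mathrm{diam}(\alpha^k_0\vee\xi)_\omega\le\epsilon$, which directly keeps the relevant sets inside a piece of $\xi_\omega(x)$ of controlled ambient size; you instead try for a uniform $m=m(\epsilon)$ chosen from the expansion rate $\tilde\lambda$ and $\mathrm{diam}(M)$, and then argue by backward contraction.

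The problem is in the step where you bound the leaf distance at time $m-1$. From $f^{m-1}_\omega y$ and $f^{m-1}_\omega x$ lying in a common $\alpha$-atom you only get $d\bigl(f^{m-1}_\omega y,f^{m-1}_\omega x\bigr)\le\mathrm{diam}(M)$, an \emph{ambient} bound. The parenthetical ``(hence, locally, $d^u$-distance)'' does not follow: the two points still lie on a common unstable leaf, but that leaf can wind through $M$, so $d^u\bigl(f^{m-1}_\omega y,f^{m-1}_\omega x\bigr)$ has no a priori bound in terms of $\mathrm{diam}(M)$. Indeed, if $y\in W^u_{\mathrm{loc}}(\omega,x)$ then the forward orbit of the pair \emph{expands} in $d^u$, so for any fixed local scale $\rho$ the hypothesis ``$f^j_\omega y$ stays in a $\rho$-sized local unstable piece of $f^j_\omega x$ for all $j\le m-1$'' is exactly what you are trying to prove, not something you can assume. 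Your ``Main obstacle'' paragraph correctly identifies this as the delicate point, but the proposed fix (positive-measure set of uniform local-leaf size plus Poincar\'e recurrence) controls the situation along a recurrent subsequence of times, not at every $j\le m-1$, and so does not close the gap. The paper's choice of a $\omega$-dependent $k$ tied to the diameter of the already-subordinated partition $\alpha^k_0\vee\xi$ sidesteps exactly this circularity; you should adopt that device (or impose a smallness condition on $\alpha$ that keeps each $\alpha$-atom inside a local unstable chart) rather than trying to draw the starting estimate from $\mathrm{diam}(M)$.
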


\begin{pf}
  For each $\omega\in\Omega$, take $k=k(\omega)$ such that $\mathrm{diam}(\alpha^k_0\vee\xi)_\omega\leq\epsilon$. Then for $n>0$, we have
  \[
    (\alpha_0^{k+n-1}\vee\xi)_\omega(x)=\bigvee_{j=0}^{n-1}( {\Theta^{-j}}\alpha^k_0\vee\xi)_\omega(x)\subset V^u(\mathcal{F},\omega,x,n,\epsilon).
  \]
  By Proposition \ref{prop:localvsfinite2} and Lemma \ref{lem:low=up} we know that
  \begin{align*}
    h_\mu(\mathcal{F},\alpha|\eta)&=h_\mu(\mathcal{F},\omega,x,\xi)\\
   & = \lim_{n\to\infty}-\frac{1}{n}\log{\mu}_{(\omega,x)}^{\xi}V^u(\mathcal{F},\omega,x,n,\epsilon) \\
    &\leq \liminf_{n\to\infty}-\frac{1}{n}\log{\mu}_{(\omega,x)}^{\xi}((\alpha^{k+n-1}_0)_\omega(x)) \\
    &= \liminf_{n\to\infty}-\frac{1}{n}\log{\mu}_{(\omega,x)}^{\xi}((\alpha^{n-1}_0)_\omega(x)) \\
    &= \liminf_{n\to\infty}\frac{1}{n}I_{{\mu}}(\alpha^{n-1}_0|\xi)(\omega,x).
  \end{align*}
  for ${\mu}$-a.e. $x$.
\end{pf}

The following lemmas are counterparts of those in \cite{HuHuaWu2017}, which are completely parallel to the treatment in \cite{HuHuaWu2017}, so we omit their proofs.
\begin{lemma}\label{lem:SMBlow2}
  Let $\eta\in\mathcal{P}^u(\Omega\times M)$ and $\xi\in\mathcal{Q}^u(\Omega\times M)$. Then for ${\mu}$-a.e. $(\omega,x)$, we have
  \[
    \liminf_{n\to\infty}\frac{1}{n}I_{{\mu}}(\alpha_0^{n-1}|\xi)(\omega,x)=\liminf_{n\to\infty}\frac{1}{n}I_{{\mu}}(\alpha_0^{n-1}|\eta)(\omega,x),
  \]
  \[
    \limsup_{n\to\infty}\frac{1}{n}I_{{\mu}}(\alpha_0^{n-1}|\xi)(\omega,x)=\limsup_{n\to\infty}\frac{1}{n}I_{{\mu}}(\alpha_0^{n-1}|\eta)(\omega,x).
  \]
\end{lemma}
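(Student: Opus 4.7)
The plan is to compare the two conditional information functions directly via their underlying conditional measures. Set $\zeta := \xi \vee \eta$, a measurable partition finer than both. By the uniqueness (tower property) of Rokhlin disintegration, for $\mu$-a.e. $(\omega,x)$ and every measurable $A \subset \zeta(\omega,x) = \xi(\omega,x) \cap \eta(\omega,x)$,
\[
\mu_{(\omega,x)}^\xi(A) \;=\; \mu_{(\omega,x)}^\xi\bigl(\zeta(\omega,x)\bigr) \cdot \mu_{(\omega,x)}^\zeta(A),
\]
together with the analogous identity for $\eta$. Dividing the two gives $\mu_{(\omega,x)}^\xi(A) = C(\omega,x)\, \mu_{(\omega,x)}^\eta(A)$ on such $A$, where
\[
C(\omega,x) \;:=\; \frac{\mu_{(\omega,x)}^\xi\bigl(\zeta(\omega,x)\bigr)}{\mu_{(\omega,x)}^\eta\bigl(\zeta(\omega,x)\bigr)}
\]
is independent of $A$.

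Next I would invoke the uniform expansion along $W^u$ from Definition \ref{domination}\ref{uniformexpansion}. Since both $\xi \in \mathcal{Q}^u(\Omega\times M)$ and $\eta \in \mathcal{P}^u(\Omega\times M)$ are subordinate to the $W^u$-foliation, for $\mu$-a.e. $(\omega,x)$ there exists $\rho(\omega,x) > 0$ with $B^u(\omega,x,\rho(\omega,x)) \subset \xi_\omega(x) \cap \eta_\omega(x)$. Uniform expansion forces the $W^u$-diameter of $(\alpha_0^{n-1})_\omega(x) \cap W^u_{\mathrm{loc}}(\omega,x)$ to shrink at a rate at least $\tilde{\lambda}^{-n}$, so there exists $N(\omega,x)$ such that for every $n \geq N(\omega,x)$,
\[
(\alpha_0^{n-1})_\omega(x) \cap \xi_\omega(x) \;\subset\; B^u(\omega,x,\rho(\omega,x)) \;\subset\; \zeta(\omega,x).
\]
For such $n$, applying the identity of the previous paragraph with $A = (\alpha_0^{n-1})_\omega(x) \cap \xi_\omega(x)$ (which also equals $(\alpha_0^{n-1})_\omega(x) \cap \eta_\omega(x)$) yields the pointwise equality
\[
I_\mu(\alpha_0^{n-1}|\xi)(\omega,x) - I_\mu(\alpha_0^{n-1}|\eta)(\omega,x) \;=\; -\log C(\omega,x),
\]
which is independent of $n$. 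Dividing by $n$ and passing to $\liminf$ and $\limsup$ as $n\to\infty$ gives both equalities in the statement.

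The main obstacle I expect is the nondegeneracy $0 < C(\omega,x) < \infty$ for $\mu$-a.e. $(\omega,x)$. Positivity of $\mu_{(\omega,x)}^\eta(\zeta(\omega,x))$ and $\mu_{(\omega,x)}^\xi(\zeta(\omega,x))$ reduces to the standard fact that, under Rokhlin disintegration, $\mu$-a.e. point is contained in the support of its own conditional measure, together with the observation that $\zeta(\omega,x)$ contains an open neighborhood of $x$ in $W^u(\omega,x)$. A secondary concern specific to the random setting is the joint measurability of $\rho(\omega,x)$, $N(\omega,x)$, and $C(\omega,x)$ in $(\omega,x)$; this follows from the measurable dependence of local unstable manifolds on $(\omega,x)$ developed in \cite{BahnmullerLiu1998}, and essentially mirrors the deterministic argument of \cite{HuHuaWu2017}.
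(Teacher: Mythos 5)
Your disintegration/tower-property reformulation is elegant, and the observation that the eventual difference you are after equals
\[
-\log C(\omega,x)=I_\mu(\eta\mid\xi)(\omega,x)-I_\mu(\xi\mid\eta)(\omega,x)
\]
is correct. But the step where you invoke uniform expansion to conclude that the $W^u$-diameter of $(\alpha_0^{n-1})_\omega(x)\cap W^u_{\mathrm{loc}}(\omega,x)$ decays like $\tilde\lambda^{-n}$ is a genuine gap. Condition~(ii) of Definition~\ref{domination} controls the \emph{intrinsic} distance $d^u\bigl(f_\omega^j x,f_\omega^j y\bigr)$, but a general $\alpha\in\mathcal{P}(\Omega\times M)$ partitions $M$ with respect to the \emph{ambient} metric $d$. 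On a compact manifold an unstable leaf winds around, so $d\bigl(f_\omega^j x,f_\omega^j y\bigr)$ can stay bounded (even small) while $d^u\bigl(f_\omega^j x,f_\omega^j y\bigr)\to\infty$. Hence a point $y\in\xi_\omega(x)$ with $d^u(x,y)\ge\rho(\omega,x)$ may sit in $\alpha_{\theta^j\omega}(f_\omega^j x)$ for every $j$, the inclusion $(\alpha_0^{n-1})_\omega(x)\cap\xi_\omega(x)\subset B^u(\omega,x,\rho)$ need never hold, and the exact identity $I_\mu(\alpha_0^{n-1}\mid\xi)-I_\mu(\alpha_0^{n-1}\mid\eta)=-\log C(\omega,x)$ for all large $n$ is not available. (When $\alpha$ is trivial the inclusion fails for every $n$; the conclusion survives there only because both information functions vanish.)

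What is actually needed is much weaker — that the difference is $o(n)$ a.e. — and this is where the paper's intended argument (mirroring \cite{HuHuaWu2017}) parts ways with yours. Applying Lemma~\ref{lem:info}(ii) twice gives, $\mu$-a.e.,
\[
I_\mu(\alpha_0^{n-1}\mid\xi)-I_\mu(\alpha_0^{n-1}\mid\eta)
= I_\mu(\eta\mid\xi)-I_\mu(\xi\mid\eta)
- I_\mu(\eta\mid\alpha_0^{n-1}\vee\xi)
+ I_\mu(\xi\mid\alpha_0^{n-1}\vee\eta).
\]
The first two terms are independent of $n$ and are finite a.e.\ — this is precisely where your observation about $\zeta(\omega,x)=\xi_\omega(x)\cap\eta_\omega(x)$ containing a $W^u$-open neighbourhood of $x$, and a.e.\ $x$ lying in the support of its own conditional measure, is genuinely used. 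The last two terms are nonnegative, and since $\alpha_0^{n-1}\vee\xi$ and $\alpha_0^{n-1}\vee\eta$ are increasing in $n$, each converges $\mu$-a.e.\ to a finite limit by martingale convergence (as in Lemma~\ref{lem:contiofparti}, with finiteness of the limit again from the positivity of the conditional measure on a $W^u$-neighbourhood of $x$). Dividing by $n$ and passing to $\liminf$ and $\limsup$ then yields both equalities without any claim that the partition cells shrink. In short: your soft ingredients (tower property, positivity of conditional measures on $W^u$-neighbourhoods) are the right ones, but they should feed into a martingale argument rather than a geometric shrinking argument, since the latter fails for arbitrary $\alpha$.
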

\begin{lemma}\label{lem:SMBup}
  For any $\eta\in\mathcal{P}^u(\Omega\times M)$ and $\xi\in\mathcal{Q}^u(\Omega\times M)$, we have
  \[
    \lim_{n\to\infty}\frac{1}{n}I_{{\mu}}(\Theta^{-n}\xi|\eta)(\omega,x)=\lim_{n\to\infty}\frac{1}{n}I_{{\mu}}(\Theta^{-n}\xi|\xi)(\omega,x)=h_\mu(\mathcal{F},\omega,x,\xi).
  \]
\end{lemma}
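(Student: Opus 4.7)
The plan is to reduce the statement in two stages: first show that $\frac{1}{n} I_{\mu}(\Theta^{-n}\xi | \xi)(\omega,x)$ converges to $H_{\mu}(\xi | \Theta\xi)$ via Birkhoff's ergodic theorem, and then show that switching the conditioning from $\xi$ to $\eta$ only changes things by a pointwise negligible amount; the identification of the common limit with $h_\mu(\mathcal{F},\omega,x,\xi)$ is then immediate from Lemma \ref{lem:locvsfinite}.

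For the first stage, since $\xi\in\mathcal{Q}^u(\Omega\times M)$ is increasing we have the chain
\[
  \xi \leq \Theta^{-1}\xi \leq \Theta^{-2}\xi \leq \cdots \leq \Theta^{-n}\xi,
\]
so iterating Lemma \ref{lem:info}(ii) collapses everything to
\[
  I_{\mu}(\Theta^{-n}\xi | \xi)(\omega,x) = \sum_{k=1}^{n} I_{\mu}(\Theta^{-k}\xi | \Theta^{-(k-1)}\xi)(\omega,x).
\]
By the $\Theta$-invariance of ${\mu}$, each term rewrites as $\varphi(\Theta^{k-1}(\omega,x))$ where $\varphi:=I_{\mu}(\Theta^{-1}\xi | \xi)$, so the whole expression is a Birkhoff sum. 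Since $\int \varphi\,\mathrm{d}{\mu}=H_{\mu}(\Theta^{-1}\xi|\xi)=H_{\mu}(\xi|\Theta\xi)<\infty$ (the finiteness coming from Lemma \ref{lem:locvsfinite} together with the standard upper bound on unstable entropy under our partial hyperbolicity assumption), the ergodic theorem applied to the ergodic system $(\Omega\times M,\Theta,{\mu})$ yields
\[
  \lim_{n\to\infty}\frac{1}{n} I_{\mu}(\Theta^{-n}\xi|\xi)(\omega,x)=H_{\mu}(\xi|\Theta\xi)=h_\mu(\mathcal{F},\omega,x,\xi)
\]
for ${\mu}$-a.e.\ $(\omega,x)$, the last equality being Lemma \ref{lem:locvsfinite}.

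For the second stage, computing $I_{\mu}(\Theta^{-n}\xi\vee\eta|\xi)$ in two ways via Lemma \ref{lem:info}(ii), combined with $\xi\vee\Theta^{-n}\xi=\Theta^{-n}\xi$, yields the identity
\[
  I_{\mu}(\Theta^{-n}\xi|\eta) - I_{\mu}(\Theta^{-n}\xi|\xi) = I_{\mu}(\xi|\eta) - I_{\mu}(\eta|\xi) + I_{\mu}(\eta|\Theta^{-n}\xi\vee\xi).
\]
The first two terms on the right are independent of $n$ and finite ${\mu}$-a.e.\ (both $\xi$ and $\eta$ are subordinate to the $W^u$-foliation and contain open plaque-neighborhoods of $(\omega,x)$, so the corresponding conditional probability measures give positive mass to each other's atoms), so after dividing by $n$ they vanish in the limit. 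For the third, Proposition \ref{prop:specialpartition}(iii) gives $\Theta^{-n}\xi\vee\xi\nearrow\bigvee_{n\geq 0}\Theta^{-n}\xi=\varepsilon$, so Lemma \ref{lem:contiofparti} yields $I_{\mu}(\eta|\Theta^{-n}\xi\vee\xi)(\omega,x)\to I_{\mu}(\eta|\varepsilon)(\omega,x)=0$ pointwise ${\mu}$-a.e., and a fortiori $\frac{1}{n}$ times it tends to zero. Combining the two stages proves both equalities.

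The main obstacle is the Birkhoff step: one must verify $\varphi\in L^1({\mu})$, which amounts to checking $H_{\mu}(\xi|\Theta\xi)<\infty$ and is supplied by Lemma \ref{lem:locvsfinite} together with the a.e.\ finiteness of $h_\mu(\mathcal{F},\omega,x,\xi)$ under the partial hyperbolicity assumption (Definition \ref{domination}). A minor auxiliary nuisance is justifying the $\mu$-a.e.\ finiteness of $I_{\mu}(\xi|\eta)$ and $I_{\mu}(\eta|\xi)$, which follows from the fact that each plaque of $\xi$ resp.\ $\eta$ contains an open $W^u$-neighborhood of its basepoint and the conditional measures of ${\mu}$ with respect to either partition have full support on the corresponding plaque.
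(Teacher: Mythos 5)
The paper omits the proof of this lemma, deferring to \cite{HuHuaWu2017}; your reconstruction is essentially correct and is the natural approach. Your first stage (collapsing $I_\mu(\Theta^{-n}\xi|\xi)$ into a Birkhoff cocycle sum of $I_\mu(\Theta^{-1}\xi|\xi)$ by the chain rule of Lemma \ref{lem:info}(ii) and the $\Theta$-invariance $I_\mu(\Theta^{-1}\alpha|\Theta^{-1}\gamma)=I_\mu(\alpha|\gamma)\circ\Theta$, then applying Birkhoff and Lemma \ref{lem:locvsfinite}) is clean. Your second-stage identity
\[
I_\mu(\Theta^{-n}\xi|\eta)-I_\mu(\Theta^{-n}\xi|\xi)=I_\mu(\xi|\eta)-I_\mu(\eta|\xi)+I_\mu(\eta|\Theta^{-n}\xi)
\]
is correct (using $\Theta^{-n}\xi\vee\xi=\Theta^{-n}\xi$ together with two applications of the chain rule), and the overall conclusion follows once one knows the three terms are $o(n)$ pointwise a.e.

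There are, however, two points where you wave your hands in a way worth tightening. First, you invoke Lemma \ref{lem:contiofparti} with $\alpha=\eta$ to conclude $I_\mu(\eta|\Theta^{-n}\xi)\to I_\mu(\eta|\varepsilon)=0$, but $\eta\in\mathcal{P}^u(\Omega\times M)$ is \emph{not} fiberwise finite (each fiber of $\eta$ has uncountably many atoms, one per local unstable plaque), so the hypotheses of Lemma \ref{lem:contiofparti} are not met. The conclusion is nonetheless true: since $\xi$ is an increasing generator subordinate to the $W^u$-foliation and $\mathcal{F}$ is uniformly expanding along $W^u$, for $\mu$-a.e.\ $(\omega,x)$ the diameter of $\Theta^{-n}\xi(\omega,x)$ tends to $0$, so there is $N(\omega,x)$ with $\Theta^{-n}\xi(\omega,x)\subset\eta_\omega(x)\cap\xi_\omega(x)$ for all $n\geq N(\omega,x)$, whence $\mu_{(\omega,x)}^{\Theta^{-n}\xi}(\eta(\omega,x))=1$ and $I_\mu(\eta|\Theta^{-n}\xi)(\omega,x)=0$ for $n\geq N(\omega,x)$ — no appeal to Lemma \ref{lem:contiofparti} is needed. (In fact, once $\Theta^{-n}\xi(\omega,x)\subset\eta_\omega(x)\cap\xi_\omega(x)$, the difference $I_\mu(\Theta^{-n}\xi|\eta)-I_\mu(\Theta^{-n}\xi|\xi)$ becomes $\emph{constant}$ in $n$, equal to $I_\mu(\xi|\eta)-I_\mu(\eta|\xi)$, which gives the result even more directly.) Second, the a.e.\ finiteness of $I_\mu(\xi|\eta)$ and $I_\mu(\eta|\xi)$ is stated but only sketched; the standard argument is the disintegration identity $\mu\{(\omega,x):\mu_{(\omega,x)}^\eta(\xi(\omega,x))=0\}=\int\mu_{(\omega,x)}^\eta(\eta_\omega(x)\setminus\operatorname{supp}\mu_{(\omega,x)}^\eta)\,d\mu=0$ (so $x\in\operatorname{supp}\mu_{(\omega,x)}^\eta$ a.e.), combined with the fact that $\xi_\omega(x)\cap\eta_\omega(x)$ is a neighborhood of $x$ in $W^u(\omega,x)$; this deserves an explicit line, since without a.e.\ finiteness of $I_\mu(\eta|\xi)$ the identity you use would risk an $\infty-\infty$.
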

\begin{lemma}\label{lem:SMBup2}
  Let $\alpha\in\mathcal{P}(\Omega\times M)$, $\eta\in\mathcal{P}^u(\Omega\times M)$. Then for ${\mu}$-a.e. $(\omega,x)$, we have
  \[
    \lim_{n\to\infty}\frac{1}{n}I_{{\mu}}(\alpha^{n-1}_0|\xi^{n-1}_0\vee\eta)(\omega,x)=0.
  \]
\end{lemma}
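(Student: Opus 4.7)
The plan is to imitate the scheme from the deterministic analogue in \cite{HuHuaWu2017}: decompose the conditional information along the $\Theta$-orbit, bound each term by a ``fiber-uniform'' conditional information that shrinks as the conditioning partition grows, and then handle the resulting Ces\`aro-type average using monotonicity plus Birkhoff.

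First I would observe that since $\xi\in\mathcal{Q}^u(\Omega\times M)$ is increasing, one has $\xi^{n-1}_0=\Theta^{-(n-1)}\xi$. Then I apply Lemma \ref{lem:info2}\ref{lem:info2_item2} with $\gamma=\Theta^{-(n-1)}\xi\vee\eta$ to obtain
\[
  I_{\mu}(\alpha^{n-1}_0|\xi^{n-1}_0\vee\eta)(\omega,x)
  =I_{\mu}(\alpha|\xi\vee\Theta^{n-1}\eta)(\Theta^{n-1}(\omega,x))
  +\sum_{i=0}^{n-2}I_{\mu}(\alpha|\alpha_1^{n-1-i}\vee\Theta^{-(n-1-i)}\xi\vee\Theta^i\eta)(\Theta^i(\omega,x)).
\]
Setting $\psi_k:=I_{\mu}(\alpha|\Theta^{-k}\xi)$ and using Lemma \ref{lem:info}, each summand is dominated by $\psi_{n-1-i}\circ\Theta^i$ and the boundary term is dominated by $\psi_0\circ\Theta^{n-1}$. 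Since $\alpha$ is fiberwise finite, $H_{\mu}(\alpha|\xi)<\infty$, so $\psi_0\in L^1({\mu})$; and since $\xi$ is increasing, the sequence $\{\Theta^{-k}\xi\}_k$ is increasing with $\bigvee_k\Theta^{-k}\xi=\varepsilon$ by Proposition \ref{prop:specialpartition}(iii), so $\{\psi_k\}$ is pointwise non-increasing and, by Lemma \ref{lem:contiofparti}, converges to $I_{\mu}(\alpha|\varepsilon)=0$ a.e.\ and in $L^1$.

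Next I would handle the two easy pieces. The boundary term $\frac{1}{n}\psi_0(\Theta^{n-1}(\omega,x))$ tends to $0$ a.e.\ because $\psi_0\in L^1(\mu)$ implies $\psi_0\circ\Theta^n/n\to 0$ a.e.\ (a standard Borel--Cantelli consequence of $\Theta$-invariance of $\mu$). The core task is therefore to show
\[
  \frac{1}{n}\sum_{k=1}^{n-1}\psi_k(\Theta^{n-1-k}(\omega,x))\xrightarrow[n\to\infty]{}0\quad\mu\text{-a.e.}
\]
Given $\varepsilon>0$, pick $K$ with $\int\psi_K\,\mathrm{d}\mu<\varepsilon$ (possible by $L^1$-convergence). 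Split the sum at $k=K$. For $1\le k\le K$ I bound $\psi_k\le\psi_0\in L^1$, so each of the $K$ terms satisfies $\psi_0(\Theta^{n-1-k}(\omega,x))/n\to 0$ a.e., and the partial sum vanishes in the limit. For $k>K$ I use the monotonicity $\psi_k\le\psi_K$ and then Birkhoff's ergodic theorem:
\[
  \frac{1}{n}\sum_{k=K+1}^{n-1}\psi_K(\Theta^{n-1-k}(\omega,x))
  =\frac{1}{n}\sum_{j=0}^{n-2-K}\psi_K(\Theta^j(\omega,x))\xrightarrow[n\to\infty]{}\int\psi_K\,\mathrm{d}\mu<\varepsilon.
\]
Taking $\limsup$ and then $\varepsilon\to 0$ yields the claim.

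The main obstacle is the non-stationarity of the sum $\sum_k\psi_k\circ\Theta^{n-1-k}$: the functions and the shifts vary simultaneously with $n$, so Birkhoff's theorem does not apply directly. The trick that resolves it is the pointwise monotonicity $\psi_k\downarrow 0$, which is guaranteed precisely because $\xi\in\mathcal{Q}^u(\Omega\times M)$ is increasing and generating; this lets us decouple the two variables by replacing the tail $\psi_k$ ($k\ge K$) by the single $L^1$ function $\psi_K$ before applying Birkhoff. Everything else is a routine assembly of Lemma \ref{lem:info}, Lemma \ref{lem:info2}, Lemma \ref{lem:contiofparti}, and the random version of the ergodic theorem on $(\Omega\times M,\mu,\Theta)$.
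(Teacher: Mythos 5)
Your reduction $\xi_0^{n-1}=\Theta^{-(n-1)}\xi$ (valid because $\xi$ is increasing) and the decomposition of $I_\mu(\alpha_0^{n-1}|\xi_0^{n-1}\vee\eta)$ via Lemma \ref{lem:info2} \ref{lem:info2_item2} are both correct, and the Ces\`aro-plus-ergodic-theorem skeleton is the right one. The proof breaks, however, at the domination step, which is its heart. You assert, citing Lemma \ref{lem:info}, that each summand $I_\mu(\alpha|\alpha_1^{n-1-i}\vee\Theta^{-(n-1-i)}\xi\vee\Theta^i\eta)$ is \emph{pointwise} bounded above by $\psi_{n-1-i}:=I_\mu(\alpha|\Theta^{-(n-1-i)}\xi)$, and that $k\mapsto\psi_k$ is \emph{pointwise} non-increasing. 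Both claims presuppose that the conditional information function is pointwise monotone in its conditioning partition, which is false: for an increasing sequence of conditioning partitions $\gamma_1\leq\gamma_2\leq\cdots$, the quantities $\mu^{\gamma_k}_{(\omega,x)}(\alpha(\omega,x))$ form a martingale in $k$, not a monotone sequence, so $I_\mu(\alpha|\gamma_{k+1})$ can exceed $I_\mu(\alpha|\gamma_k)$ on a set of positive measure. Lemma \ref{lem:info} gives pointwise monotonicity of $I_\mu$ only in its first argument; monotonicity in the conditioning holds only after integration, for $H_\mu$. Without a valid integrable envelope, your reduction to Birkhoff applied to the single function $\psi_K$ does not go through.

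The gap can be closed with tools the paper already supplies, and this is presumably what the omitted proof (parallel to \cite{HuHuaWu2017}) does. First, remove $\Theta^i\eta$ from the conditioning via the mechanism of Lemma \ref{lem:convergence} with $k=\infty$: for $\mu$-a.e.\ $(\omega,x)$ there is $N(\omega,x)$ such that $\xi(\Theta^i(\omega,x))\subset(\Theta^i\eta)(\Theta^i(\omega,x))$ for all $i>N(\omega,x)$; since each conditioning partition in your sum refines $\xi$, this forces $\bigl(\alpha_1^{n-1-i}\vee\Theta^{-(n-1-i)}\xi\vee\Theta^i\eta\bigr)(\Theta^i(\omega,x))=\bigl(\alpha_1^{n-1-i}\vee\Theta^{-(n-1-i)}\xi\bigr)(\Theta^i(\omega,x))$ for $i>N(\omega,x)$, so only finitely many terms escape. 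Second, set $g_k:=I_\mu(\alpha|\alpha_1^k\vee\Theta^{-k}\xi)$; the conditioning partitions increase to $\varepsilon$ because $\xi$ is a generator, so Lemma \ref{lem:contiofparti} gives $g_k\to 0$ $\mu$-a.e., and Lemma \ref{lem:semicontiofparti} supplies the integrable envelope $g^*:=\sup_k g_k\in L^1(\mu)$. Now run your $K$-cutoff with $g^*_K:=\sup_{k\geq K}g_k$ in place of $\psi_K$: since $g^*_K\leq g^*$ we have $g^*_K\in L^1(\mu)$, and $g^*_K\downarrow 0$ a.e., so $\int g^*_K\,\mathrm{d}\mu\to 0$ by dominated convergence, and Birkhoff applied to the \emph{fixed} function $g^*_K$ controls the tail sum exactly as you intended. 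This is the standard Maker--Breiman argument; the Chung--Neveu envelope from Lemma \ref{lem:semicontiofparti} is what replaces the false pointwise monotonicity.
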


Now, we {begin to prove} Theorem \ref{thm:SMB}.

\begin{potB}
  By Lemma \ref{lem:SMBlow} and Lemma \ref{lem:SMBlow2} we can get directly
  \begin{equation}\label{eq:SMBlow}
    h_\mu(\mathcal{F},\alpha|\eta)\leq\liminf_{n\to\infty}\frac{1}{n}I_{{\mu}}(\alpha_0^{n-1}|\eta)(\omega,x).
  \end{equation}
  By Lemma \ref{lem:info}, we have
  \begin{align*}
    I_{{\mu}}(\alpha^{n-1}_0|\eta)(\omega,x) \leq& I_{{\mu}}(\alpha^{n-1}_0\vee\xi^{n-1}_0|\eta)(\omega,x) \\
   =& I_{{\mu}}(\xi^{n-1}_0|\eta)(\omega,x)+I_{{\mu}}(\alpha^{n-1}_0|\xi^{n-1}_0\vee\eta)(\omega,x).
  \end{align*}
  Then by Lemma \ref{lem:SMBup2}, Lemma \ref{lem:SMBup}, and Proposition \ref{prop:localvsfinite2}, we have
  \begin{align}\label{eq:SMBup}
    \limsup_{n\to\infty}\frac{1}{n}I_{{\mu}}(\alpha^{n-1}_0|\eta)(\omega,x) &\leq\limsup_{n\to\infty}\frac{1}{n}I_{{\mu}}(\xi^{n-1}_0|\eta)(\omega,x) \notag \\
    &= h_\mu^u(\mathcal{F})=h_\mu(\mathcal{F},\alpha|\eta).
  \end{align}
  Combining \eqref{eq:SMBlow} and \eqref{eq:SMBup}, we get the result what we need.
\end{potB}

\section{Unstable Pressure}\label{sec:Pressure}

In this section, the definition of unstable pressure for a potential function $\phi\in L^1(\Omega, C(M))$ is given.

Fix $\delta>0$, for $(\omega,x)\in\Omega\times M$, Let $\overline{W^u(\omega,x,\delta)}$ be the $\delta$-neighborhood of $x$ in $W^u(\omega,x)$. A subset $E$ of $\overline{W^u(\omega,x,\delta)}$ is called an $(\omega,n,\epsilon)$ {$W^u$-separated set} if for any $y_1,y_2\in E$, we have $d^u_{\omega,n}(y_1,y_2)>\epsilon$.

It is easy to check that $d^u_{\omega,n}$ is measurable in $\omega$.

Now we can define $P^u(\mathcal{F},\phi,\omega,x,\delta,n,\epsilon)$ as follows,
\begin{align*}
 P^u(\mathcal{F},\phi,\omega,x,\delta,n,\epsilon)=\sup & \Big\{\sum_{y\in E}\exp((S_n\phi)(y)):\\ & \text{ } E\text{ is an }(\omega,n,\epsilon)\ W^u\text{-separated set of }\overline{W^u(\omega,x,\delta)}\Big\},
\end{align*}
{where $(S_n\phi)(y)=\sum_{j=0}^{n-1}\phi(f_\omega^j(y))$.} Set $P^u(\mathcal{F},\phi,\omega,x,\delta,n,\epsilon)=1$, when $(\omega,x)\in\Omega\times M\setminus\Lambda$. Then $P^u(\mathcal{F},\phi,\omega,x,\delta)$ is defined as
\[
  P^u(\mathcal{F},\phi,\omega,x,\delta)=\lim_{\epsilon\to 0}\limsup_{n\to\infty}\frac{1}{n}\log P^u(\mathcal{F},\phi,\omega,x,\delta,n,\epsilon).
\]
Next, we define
\[
  P^u(\mathcal{F},\phi,\omega,\delta)=\sup_{x\in M}P^u(\mathcal{F},\phi,\omega,x,\delta)
\]
and
\[
  P^u(\mathcal{F},\phi,\delta)=\int_{\Omega}P^u(\mathcal{F},\phi,\omega,\delta)\mathrm{d}\mathbf{P}(\omega).
\]
Because $\mathbf{P}$ is ergodic, we have the following lemma.
\begin{lemma}\label{lem:generic}
$P^u(\mathcal{F},\phi,\delta)=P^u(\mathcal{F},\phi,\omega,\delta)$ for $\mathbf{P}$-a.e. $\omega\in\Omega$.
\end{lemma}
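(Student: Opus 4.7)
The plan is to reduce the lemma to ergodicity of $\theta$ by showing that the function
\[
  g(\omega) := P^u(\mathcal{F},\phi,\omega,\delta)
\]
is $\mathscr{F}$-measurable and satisfies $g(\theta\omega) \leq g(\omega)$ for $\mathbf{P}$-a.e.\ $\omega$. Since $\theta$ preserves $\mathbf{P}$, this one-sided inequality upgrades each level set $\{g > a\}$ to a $\theta$-invariant set (mod $\mathbf{P}$-null: the set has the same $\mathbf{P}$-measure as its preimage, which is contained in it), and ergodicity of $\theta$ then forces $g$ to be $\mathbf{P}$-a.e.\ constant. Denoting that constant by $c$, the identity $c = \int_\Omega g\,\mathrm{d}\mathbf{P} = P^u(\mathcal{F},\phi,\delta)$ gives the conclusion.

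For the one-sided invariance, I would exploit the cocycle structure of $\Theta$. Given any $\tilde{y}\in M$, set $\tilde{x} := (f^1_\omega)^{-1}(\tilde{y})$. The elementary identity
\[
  d^u_{\omega,n+1}(x_1,x_2) = \max\bigl\{d^u(x_1,x_2),\, d^u_{\theta\omega,n}(f^1_\omega x_1,\, f^1_\omega x_2)\bigr\}
\]
shows that the $f^1_\omega$-pullback of any $(\theta\omega,n,\epsilon)$ $W^u$-separated subset of $\overline{W^u(\theta\omega,\tilde{y},\delta)}$ is an $(\omega,n+1,\epsilon)$ $W^u$-separated set, and the uniform expansion along $W^u$ in Definition \ref{domination} forces this pullback to lie inside $\overline{W^u(\omega,\tilde{x},\delta/\tilde{\lambda}(\omega))}\subset\overline{W^u(\omega,\tilde{x},\delta)}$. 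Combining this with the telescoping identity $(S_n\phi)(f^1_\omega x) = (S_{n+1}\phi)(x) - \phi(\omega,x)$ and the bound $|\phi(\omega,\cdot)|\leq M_\phi(\omega) := \max_{x\in M}|\phi(\omega,x)| < \infty$ (finite because $\phi$ is continuous in $x$ on the compact $M$) yields
\[
  P^u(\mathcal{F},\phi,\theta\omega,\tilde{y},\delta,n,\epsilon) \leq e^{M_\phi(\omega)}\, P^u(\mathcal{F},\phi,\omega,\tilde{x},\delta,n+1,\epsilon).
\]
Applying $\frac{1}{n}\log$, $\limsup_{n\to\infty}$, $\lim_{\epsilon\to 0}$, and then $\sup_{\tilde{y}\in M}$ (equivalently $\sup_{\tilde{x}\in M}$, since $f^1_\omega$ is a bijection of $M$) gives precisely $g(\theta\omega)\leq g(\omega)$.

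The step I expect to require the most care is the measurability of $g$. The inner quantity $(\omega,x)\mapsto P^u(\mathcal{F},\phi,\omega,x,\delta,n,\epsilon)$ is jointly measurable by the measurability of $\omega\mapsto d^u_{\omega,n}$ noted just before the lemma, together with a standard reduction of the supremum over separated sets to a countable family using separability of $\overline{W^u(\omega,x,\delta)}$; the operations $\limsup_n$ and $\lim_{\epsilon\to 0}$ preserve measurability. The outer $\sup_{x\in M}$ is the main subtlety, which I would handle via upper semicontinuity in $x$ combined with compactness of $M$ to reduce to a countable dense supremum. Once these bookkeeping issues are settled, the ergodic argument above closes the proof.
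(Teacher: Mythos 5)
Your argument is correct and is, in substance, the one the paper intends behind its one-line remark: show that $\omega\mapsto P^u(\mathcal{F},\phi,\omega,\delta)$ is $\mathbf{P}$-a.e.\ $\theta$-invariant and invoke ergodicity. Your observation that the one-sided sub-invariance $g\circ\theta\leq g$, together with measure preservation of $\theta$, already forces full invariance is a clean way to sidestep the reverse inequality, which does not come directly since pushing a separated set forward by $f^1_\omega$ would enlarge the $\delta$-ball.
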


Finally, we can give the definition of unstable pressure for $\mathcal{F}$.
\begin{definition}\label{def:upressure}
  The \emph{unstable pressure} for $\mathcal{F}$ is defined as
  \[
    P^u(\mathcal{F},\phi)=\lim_{\delta\to 0}P^u(\mathcal{F},\phi,\delta).
  \]
\end{definition}

\begin{remark}
As what one can do for classical pressure, we can also define the unstable pressure via spanning sets or open covers. We omit the details here.
\end{remark}

\begin{remark}
   {When $\phi\equiv0$}, we call the unstable topological pressure $P^u(\mathcal{F},0)$ {\em the unstable topological entropy} of $\mathcal{F}$, and we denote it by $h^u_{\text{top}}(\mathcal{F})$.
\end{remark}

For the proof of the variational principle, we need the following lemma.
\begin{lemma}\label{lem:pressuremax}
  \[
     {P^u(\mathcal{F},\phi)=P^u(\mathcal{F},\phi,\delta)}\quad\text{for any }\delta>0.
  \]  	
\end{lemma}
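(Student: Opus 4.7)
\begin{pf}[Proof sketch]
The plan is to show that $\delta \mapsto P^u(\mathcal{F},\phi,\delta)$ is in fact independent of $\delta$; the conclusion then follows directly from the definition $P^u(\mathcal{F},\phi)=\lim_{\delta\to0}P^u(\mathcal{F},\phi,\delta)$. Monotonicity in one direction is trivial: if $\delta<\delta'$, then $\overline{W^u(\omega,x,\delta)}\subset\overline{W^u(\omega,x,\delta')}$, so every $(\omega,n,\epsilon)$ $W^u$-separated set in the smaller ball is one in the larger ball, giving $P^u(\mathcal{F},\phi,\omega,x,\delta,n,\epsilon)\leq P^u(\mathcal{F},\phi,\omega,x,\delta',n,\epsilon)$ and hence $P^u(\mathcal{F},\phi,\delta)\leq P^u(\mathcal{F},\phi,\delta')$.

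For the reverse inequality, I would fix $0<\delta<\delta'$ and cover $\overline{W^u(\omega,x,\delta')}$ by finitely many sets $\overline{W^u(\omega,x_i,\delta)}$, $i=1,\dots,K(\omega)$, where $K(\omega)$ can be taken uniformly bounded by some constant $K$ (depending only on $\delta/\delta'$ and $\dim M$): the unstable leaves are submanifolds of a fixed ambient compact $M$ of uniformly bounded dimension, so the number of $\delta$-balls needed to cover a $\delta'$-ball in the induced intrinsic metric on any leaf admits a uniform bound. Given any $(\omega,n,\epsilon)$ $W^u$-separated set $E\subset\overline{W^u(\omega,x,\delta')}$, the pieces $E_i:=E\cap\overline{W^u(\omega,x_i,\delta)}$ remain $(\omega,n,\epsilon)$ $W^u$-separated, so
\[
\sum_{y\in E}e^{(S_n\phi)(y)}\;\leq\;\sum_{i=1}^{K}P^u(\mathcal{F},\phi,\omega,x_i,\delta,n,\epsilon)\;\leq\;K\cdot\sup_{z\in M}P^u(\mathcal{F},\phi,\omega,z,\delta,n,\epsilon).
\]
Taking the sup over $E$, then $\frac1n\log$, then $\limsup_{n\to\infty}$ (the factor $K$ vanishes in the exponential rate) and finally $\lim_{\epsilon\to0}$ yields $P^u(\mathcal{F},\phi,\omega,x,\delta')\leq P^u(\mathcal{F},\phi,\omega,\delta)$. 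Taking the supremum over $x$ and integrating in $\omega$ gives $P^u(\mathcal{F},\phi,\delta')\leq P^u(\mathcal{F},\phi,\delta)$.

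Combining both directions, $P^u(\mathcal{F},\phi,\delta)=P^u(\mathcal{F},\phi,\delta')$ for all positive $\delta,\delta'$, so the function of $\delta$ is constant and equals its limit as $\delta\to0$, which by definition is $P^u(\mathcal{F},\phi)$.

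The main obstacle is establishing the uniform-in-$\omega$ bound $K$ on the covering number. Since $W^u(\omega,x)$ is only a measurable family of $C^{1,1}$ immersed submanifolds (Proposition \ref{prop:unstablemanifold}), the geometry of leaves a priori depends on $(\omega,x)$. The cleanest way around this is to appeal to the uniform expansion in Definition \ref{domination}\ref{uniformexpansion}: the dimension $u(\omega,x)=\dim E^u(\omega,x)$ is bounded by $\dim M$, and the volume-comparison inequality on the leaves (with derivatives controlled by the integrability condition \eqref{eq:basic}) furnishes a uniform covering number depending only on $\delta'/\delta$ and $\dim M$. Once this uniform $K$ is in hand, the rest of the argument is routine.
\end{pf}
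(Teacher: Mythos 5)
Your overall strategy — cover the $\delta'$-leaf-ball by finitely many $\delta$-leaf-balls, split a separated set along the cover, and let the multiplicative constant disappear in the exponential growth rate — is the same as the paper's, which also reduces the claim to bounding $P^u(\mathcal{F},\phi,\omega,y,\delta)$ through a finite cover by $\delta_1$-balls. However, there is a genuine gap in one step, and you have also created a nonexistent obstacle for yourself.

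The gap is in passing from $\sum_{i=1}^{K}P^u(\mathcal{F},\phi,\omega,x_i,\delta,n,\epsilon)$ to $K\cdot\sup_{z\in M}P^u(\mathcal{F},\phi,\omega,z,\delta,n,\epsilon)$. After this replacement you apply $\limsup_{n\to\infty}$ and $\lim_{\epsilon\to0}$ and quietly identify the result with $\sup_z P^u(\mathcal{F},\phi,\omega,z,\delta)=P^u(\mathcal{F},\phi,\omega,\delta)$. But $\lim_{\epsilon\to0}\limsup_{n\to\infty}$ does not commute with $\sup_{z\in M}$; in general
$\lim_{\epsilon}\limsup_{n}\frac1n\log\sup_z P^u(\cdot,z,\cdot,n,\epsilon)\geq\sup_z\lim_{\epsilon}\limsup_n\frac1n\log P^u(\cdot,z,\cdot,n,\epsilon)$,
with the inequality going the wrong way for your purposes. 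The fix is easy and is exactly what the paper does: keep the sum (or the maximum) over the \emph{fixed} finite collection of centers $x_1,\dots,x_K$, which were chosen independently of $n$ and $\epsilon$. For a finite \emph{fixed} index set one does have $\limsup_n\max_i=\max_i\limsup_n$ and $\lim_{\epsilon}\max_i=\max_i\lim_\epsilon$, so $\limsup_n\frac1n\log\sum_i P^u(\cdot,x_i,\delta,n,\epsilon)\leq\max_i P^u(\mathcal{F},\phi,\omega,x_i,\delta,\epsilon)\leq\max_i P^u(\mathcal{F},\phi,\omega,x_i,\delta)\leq P^u(\mathcal{F},\phi,\omega,\delta)$; this is precisely the "for some $1\leq l\leq N$" step in the paper's proof.

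The "main obstacle" you identify — getting a covering number $K$ that is uniform in $\omega$ — is a non-issue and you should drop it. In the paper, $N=N(\omega)$ is allowed to depend on $\omega$ (and on $\delta,\delta_1$ and the Riemannian structure of $\overline{W^u(\omega,y_\omega,\delta)}$); all that is used is that $N(\omega)<\infty$ for each $\omega$, which is immediate from compactness of $\overline{W^u(\omega,y_\omega,\delta)}$ as a subset of the leaf, independently of any control on the regularity of the family of leaves. Since $\frac1n\log N(\omega)\to0$ for every fixed $\omega$, and all the limits in $n$ and $\epsilon$ are taken pointwise in $\omega$ before the integration over $\Omega$, the pointwise finite bound is enough. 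There is no need to invoke uniform expansion, the integrability condition \eqref{eq:basic}, or a volume-comparison argument for a uniform $K$.
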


\begin{pf}
  It is clear that $P^u(\mathcal{F},\phi)\leq P^u(\mathcal{F},\phi,\delta)$ for any $\delta>0$, since the function $\delta\mapsto P^u(\mathcal{F},\phi,\delta)$ is increasing.

  Now let $\delta>0$ be fixed. For any $\rho>0$ and each $\omega\in\Omega$, there exists $y_\omega$ such that
  \[
    P^u(\mathcal{F},\phi,\omega,\delta)\leq P^u(\mathcal{F},\phi,\omega,y_\omega,\delta)+\frac{\rho}{3}.
  \]
  Take $\epsilon_0>0$ such that
  \[
    P^u(\mathcal{F},\phi,\omega,y_\omega,\delta)\leq \limsup_{n\to\infty}\frac{1}{n}\log P^u(\mathcal{F},\phi,\omega,y_\omega,\delta,n,\epsilon_0)+\frac{\rho}{3}.
  \]
  Then choose $0<\delta_1<\delta$ small enough such that
  \begin{equation}\label{eq:pressuremax}
    P^u(\mathcal{F},\phi)+\frac{\rho}{3}\geq P^u(\mathcal{F},\phi,\delta_1).
  \end{equation}
  There exists a positive number $N=N(\omega)$ which depends on $\delta$, $\delta_1$ and the Riemannian structure on $\overline{W^u(\omega,y_\omega,\delta)}$ such that
  \[
    \overline{W^u(\omega,y_\omega,\delta)}\subset\bigcup_{j=1}^N\overline{W^u(\omega,y_j,\delta_1)}
  \]
  for some $y_j\in\overline{W^u(\omega,y_\omega,\delta)}$, $j=1,2,\cdots,N$. Then we have
  \begin{align*}
    P^u(\mathcal{F},\phi,\omega,\delta)&\leq P^u(\mathcal{F},\phi,\omega,y_\omega,\delta)+\frac{\rho}{3} \\
    &\leq \limsup_{n\to\infty}\frac{1}{n}\log P^u(\mathcal{F},\phi,\omega,y_\omega,\delta,n,\epsilon_0)+\frac{2\rho}{3} \\
    &\leq \limsup_{n\to\infty}\frac{1}{n}\log\Big(\sum_{j=1}^{N}P^u(\mathcal{F},\phi,\omega,y_j,\delta_1,n,\epsilon_0)\Big)+\frac{2\rho}{3} \\
    &\leq \limsup_{n\to\infty}\frac{1}{n}\log N P^u(\mathcal{F},\phi,\omega,y_l,\delta_1,n,\epsilon_0)+\frac{2\rho}{3}\text{ for some }1\leq l\leq N \\
    &= \limsup_{n\to\infty}\frac{1}{n}\log P^u(\mathcal{F},\phi,\omega,y_l,\delta_1,n,\epsilon_0)+\frac{2\rho}{3} \\
    &\leq \lim_{\epsilon\to0}\limsup_{n\to\infty}\frac{1}{n}\log P^u(\mathcal{F},\phi,\omega,y_l,\delta_1,n,\epsilon)+\frac{2\rho}{3} \\
    &\leq P^u(\mathcal{F},\phi,\omega,\delta_1)+\frac{2\rho}{3}.
  \end{align*}
  Integrating both sides of the above inequality, we get
  \[
    P^u(\mathcal{F},\phi,\delta)\leq P^u(\mathcal{F},\phi,\delta_1)+\frac{2\rho}{3}.
  \]
  Thus, by \eqref{eq:pressuremax} we have
  \[
    P^u(\mathcal{F},\phi,\delta)\leq P^u(\mathcal{F},\phi)+\rho.
  \]
  Since $\rho$ is arbitrary, we have
  \[
    P^u(\mathcal{F},\phi,\delta)\leq P^u(\mathcal{F},\phi),
  \]
completing the proof of the lemma.
\end{pf}

The following properties of unstable pressure can be obtained directly from its definition. For $\phi\in L^1(\Omega,C(M))$, we define $\Vert \phi \Vert\colon=\int_{\Omega} \sup_{x\in M}|\phi(\omega,\cdot)| \mathbf{P}(\omega)$.
\begin{proposition}
  Let $\mathcal{F}$ be a $C^2$ {random partially hyperbolic dynamical system}. Then for any $\phi$, $\psi\in L^1(\Omega,C(M))$ and $c\in L^1(\Omega,\mathbf{P})$, the following properties hold.
  \begin{enumerate}[label=(\roman*)]
    \item If $\phi\leq \psi$, then $P^u(\mathcal{F},\phi)\leq P^u(\mathcal{F},\psi)$;
    \item $P^u(\mathcal{F},\phi+c)=P^u(\mathcal{F},\phi)+\int_\Omega c\mathrm{d}\mathbf{P}(\omega)$;
    \item {$h_{\text{top}}^u(\mathcal{F})+\inf \phi \leq P^u(\mathcal{F}, \phi) \leq h_{\text{top}}^u(\mathcal{F})+\sup\phi$;}
    \item {if $P^u(\mathcal{F},\cdot)<\infty$, $|P^u(\mathcal{F}, \phi)-P^u(\mathcal{F}, \psi)|\leq \|\phi-\psi\|$;}
    \item {if $P^u(\mathcal{F},\cdot)<\infty$}, then the map $P^u(\mathcal{F},\cdot)\colon L^1(\Omega,C(M))\to\mathbb{R}\cup\{\infty\}$ is convex;
    \item $P^u(\mathcal{F},\phi+\psi\circ\Theta-\psi)=P^u(\mathcal{F},\phi)$;
    \item $P^u(\mathcal{F},\phi+\psi)\leq P^u(\mathcal{F},\phi)+P^u(\mathcal{F},\psi)$.
  \end{enumerate}
\end{proposition}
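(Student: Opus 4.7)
The plan is to chase each of the seven properties through the defining chain $P^u(\mathcal{F},\phi,\omega,x,\delta,n,\epsilon)\to P^u(\mathcal{F},\phi,\omega,x,\delta)\to P^u(\mathcal{F},\phi,\omega,\delta)\to P^u(\mathcal{F},\phi,\delta)\to P^u(\mathcal{F},\phi)$ by first comparing, for a fixed $(\omega,n,\epsilon)$ $W^u$-separated set $E\subset\overline{W^u(\omega,x,\delta)}$, the contributions $\sum_{y\in E}\exp((S_n\phi)(y))$ under the two potentials, and then propagating the comparison through $\limsup_n$, $\lim_{\epsilon\to0}$, $\sup_x$, $\int\cdot\,d\mathbf{P}(\omega)$ and $\lim_{\delta\to0}$ in that order. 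The crucial observation is that the same separated set $E$ works simultaneously for all potentials, so everything reduces to elementary manipulations of such exponential sums, together with Birkhoff's ergodic theorem applied to the ergodic base map $\theta$.

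For (i), $S_n\phi\le S_n\psi$ pointwise, hence the $\phi$-sum is dominated termwise by the $\psi$-sum and monotonicity survives $\log$, $\limsup$, $\sup$ and integration. For (ii), since $c$ depends only on $\omega$, one has $S_n(\phi+c)(y)=S_n\phi(y)+\sum_{j=0}^{n-1}c(\theta^j\omega)$, so $\exp\bigl(\sum_{j=0}^{n-1}c(\theta^j\omega)\bigr)$ factors out of the inner sum; Birkhoff applied to the ergodic $\theta$ gives $\frac{1}{n}\sum_{j=0}^{n-1}c(\theta^j\omega)\to\int c\,d\mathbf{P}$ $\mathbf{P}$-a.s., and this constant then carries through all subsequent limits. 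Item (iii) follows by sandwiching $\phi$ between its constant bounds $\inf\phi$ and $\sup\phi$ and combining (i)–(ii) with the identity $P^u(\mathcal{F},0)=h^u_{\text{top}}(\mathcal{F})$. Item (iv) follows by sandwiching $\phi$ between $\psi\pm g(\omega)$ where $g(\omega):=\sup_{x\in M}|\phi(\omega,x)-\psi(\omega,x)|$ belongs to $L^1(\mathbf{P})$ with $\int g\,d\mathbf{P}=\|\phi-\psi\|$; applying (i) and (ii) with $c=g$ yields both sides.

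For (v), H\"older's inequality on the finite sum indexed by $E$ gives, for $t\in(0,1)$, $\sum_{y\in E}\exp(S_n(t\phi+(1-t)\psi)(y))\le\bigl(\sum_{y\in E}\exp(S_n\phi(y))\bigr)^t\bigl(\sum_{y\in E}\exp(S_n\psi(y))\bigr)^{1-t}$, and taking $\frac{1}{n}\log$ propagates through the remaining limits. For (vii), positivity of the summands yields the trivial inequality $\sum_{y\in E}\exp(S_n\phi(y))\exp(S_n\psi(y))\le\bigl(\sum_{y\in E}\exp(S_n\phi(y))\bigr)\bigl(\sum_{y\in E}\exp(S_n\psi(y))\bigr)$; again one takes $\frac{1}{n}\log$ and passes to limits. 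For (vi), the cocycle boundary identity $S_n(\phi+\psi\circ\Theta-\psi)(y)=S_n\phi(y)+\psi(\Theta^n(\omega,y))-\psi(\omega,y)$ is exact, and the boundary term is bounded uniformly in $y$ by $g(\omega)+g(\theta^n\omega)$ with $g(\omega):=\sup_{x\in M}|\psi(\omega,x)|\in L^1(\mathbf{P})$; since $g\in L^1$, the standard Borel--Cantelli argument along the ergodic $\theta$-orbit gives $\frac{1}{n}g(\theta^n\omega)\to0$ $\mathbf{P}$-a.s., so the boundary term is $o(n)$ uniformly in $y$, and hence $\frac{1}{n}\log\sum_{y\in E}\exp(S_n(\phi+\psi\circ\Theta-\psi)(y))$ shares its $\limsup$ with $\frac{1}{n}\log\sum_{y\in E}\exp(S_n\phi(y))$, which gives equality in (vi).

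I expect (vi) to be the main obstacle: it is the only equality among the seven, and the manipulation is not a pointwise comparison of summands but a cocycle boundary estimate. Because $\psi$ is only assumed in $L^1(\Omega,C(M))$ (not uniformly bounded over $\omega$), one is forced into the ergodic-theoretic estimate $g(\theta^n\omega)/n\to0$ $\mathbf{P}$-a.s.\ in order to control the boundary term uniformly in $y$; without this, one would only get $o(1)$ pointwise in $\omega$ but not the uniformity in $y$ needed to absorb the term into the separated-set sum. The remaining delicacy is interpretive: in (iii) and (iv) one must read $\inf\phi$, $\sup\phi$ and $\|\cdot\|$ correctly for $L^1(\Omega,C(M))$ potentials, but once this is settled the arguments collapse to (i) and (ii) together with elementary estimates on the exponential sums.
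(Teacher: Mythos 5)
The paper offers no proof of this proposition---it merely asserts the properties ``can be obtained directly from its definition''---so there is nothing to compare against. Your argument is correct and is presumably the routine verification the authors had in mind: each item reduces to an estimate on the exponential sums over a fixed $(\omega,n,\epsilon)$ $W^u$-separated set (termwise comparison for (i), factoring out $\exp\sum c(\theta^j\omega)$ for (ii), H\"older for (v), $\sum a_ib_i\leq(\sum a_i)(\sum b_i)$ for (vii), and the telescoping boundary bound $|\psi(\Theta^n(\omega,y))-\psi(\omega,y)|\leq g(\omega)+g(\theta^n\omega)$ with $g(\theta^n\omega)/n\to 0$ $\mathbf{P}$-a.s.\ for (vi)), followed by propagation through $\limsup_n$, $\lim_{\epsilon\to0}$, $\sup_x$, $\int\cdot\,\mathrm{d}\mathbf{P}$ and $\lim_{\delta\to0}$. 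Your identification of (vi) as the only non-pointwise comparison---and hence the one step that genuinely requires the ergodicity of $\theta$ and the $L^1$ integrability of $\sup_x|\psi(\omega,\cdot)|$---is exactly right.
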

\section{A Variational Principle}\label{sec:VP}

In this section, we give the proof of Theorem \ref{thm:vp}. Firstly, we give the following well-known lemma,  {which is almost identical to Lemma 1.24 in \cite{Bowen1975} except that we have removed the condition $s \leq 1$.}

\begin{lemma}\label{lem:wellknown}
  Suppose $0\leq p_u$, $\cdots$, $p_m\leq1$, {$s=p_1+\cdots+p_m$} and $a_1$, $\cdots$, $a_m\in\mathbb{R}$. Then
  \[
    \sum_{u=1}^{m}p_u(a_u-\log p_u)\leq s\left(\log\sum_{u=1}^{m}e^{a_u}-\log s\right).
  \]
\end{lemma}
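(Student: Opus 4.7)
The plan is to reduce this inequality to a standard Jensen/Gibbs-type entropy inequality by normalizing the weights $(p_u)$ into a probability vector. First I would dispose of the degenerate case $s=0$, which forces all $p_u=0$; under the convention $0\log 0 = 0$ both sides of the inequality vanish, so there is nothing to prove. Hence I may assume $s>0$ throughout.

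Second, I would set $q_u := p_u/s$ for $u = 1, \ldots, m$, so that $q_u\geq 0$ and $\sum_{u=1}^m q_u = 1$. A short algebraic manipulation gives
\begin{align*}
\sum_{u=1}^m p_u(a_u - \log p_u)
&= s\sum_{u=1}^m q_u\bigl(a_u - \log q_u - \log s\bigr) \\
&= s\sum_{u=1}^m q_u(a_u - \log q_u) - s\log s.
\end{align*}
The term $-s\log s$ appears on both sides of the target inequality and cancels, so the original statement is equivalent to
\[
\sum_{u=1}^m q_u(a_u - \log q_u) \leq \log \sum_{u=1}^m e^{a_u}.
\]

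Third, this reduced inequality is immediate from Jensen's inequality applied to the concave function $\log$ against the probability weights $(q_u)$:
\[
\sum_{u=1}^m q_u \log\frac{e^{a_u}}{q_u}
\leq \log \sum_{u=1}^m q_u\cdot\frac{e^{a_u}}{q_u}
= \log \sum_{u=1}^m e^{a_u},
\]
with indices where $q_u=0$ omitted under the usual $0\log 0 = 0$ convention. The only subtlety worth flagging is confirming that this argument survives without Bowen's original hypothesis $s\leq 1$, which the authors explicitly remove. Since neither the normalization $q_u = p_u/s$ nor Jensen's inequality uses $s\leq 1$ anywhere, the strengthened version comes at no additional cost, and there is essentially no real obstacle to the proof.
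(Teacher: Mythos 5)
Your proof is correct and is essentially the standard argument behind Bowen's Lemma 1.24 (the reference the paper cites without reproving): normalize $p_u$ to a probability vector $q_u=p_u/s$, observe that the $-s\log s$ terms cancel, and finish with Jensen's inequality for the concave function $\log$. You also correctly identify that nothing in this argument uses $s\leq 1$, which is exactly why the paper could drop that hypothesis.
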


\begin{proposition}\label{prop:vpneq}
  For $\mu\in\mathcal{M}_{\mathbf{P}}(\mathcal{F})$,
  \[
    h^u_\mu(\mathcal{F})+\int_{\Omega\times M}\phi \mathrm{d}{\mu}\leq P^u(\mathcal{F},\phi).
  \]
\end{proposition}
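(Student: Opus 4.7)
The plan is to adapt the classical one-sided argument (the easy direction of the variational principle) to the random partially hyperbolic setting. Since $\mu \mapsto h_\mu^u(\mathcal{F})$ is affine by Lemma \ref{lem:affine} and $\mu \mapsto \int \phi\, d\mu$ is affine, an ergodic decomposition reduces the inequality to the case when $\mu$ is ergodic. Fix a small $\delta > 0$ and choose $\alpha \in \mathcal{P}(\Omega \times M)$ with $\operatorname{diam}(\alpha_\omega) < \delta$ and $\mu_\omega(\partial \alpha_\omega) = 0$ for $\mathbf{P}$-a.e.\ $\omega$; fix $\eta \in \mathcal{P}^u(\Omega \times M)$ whose atoms lie in local unstable manifolds of uniform radius $\delta_0 \gg \delta$.

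For each $(\omega, x)$, index by $\{A\}$ the atoms of $\alpha_0^{n-1}$ meeting $\eta_\omega(x)$, and choose $y_A \in A \cap \eta_\omega(x)$ with $S_n\phi(\omega, y_A)$ close to $\sup_{A \cap \eta_\omega(x)} S_n\phi(\omega, \cdot)$. Since the conditional measures $\mu^\eta_{(\omega,x)}(A)$ sum to $1$, Lemma \ref{lem:wellknown} applied fiberwise gives
\[
-\sum_A \mu^\eta_{(\omega,x)}(A) \log \mu^\eta_{(\omega,x)}(A) + \int S_n\phi \, d\mu^\eta_{(\omega,x)} \;\le\; \log \sum_A e^{S_n\phi(\omega, y_A)}.
\]
Integrating against $\mu$, the left-hand side equals $H_\mu(\alpha_0^{n-1}|\eta) + \int S_n\phi\, d\mu$. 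Dividing by $n$ and letting $n \to \infty$, the left-hand side converges to $h_\mu(\mathcal{F}, \alpha | \eta) + \int \phi\, d\mu = h_\mu^u(\mathcal{F}) + \int \phi\, d\mu$ by Theorem \ref{thm:SMB}, Proposition \ref{prop:localvsfinite2}, and $\Theta$-invariance of $\mu$.

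The remaining step is to dominate $\sum_A e^{S_n\phi(y_A)}$ by a separated-set partition function from the definition of $P^u$. The key observation is that each atom of $\alpha_0^{n-1}$ has $d^u_{\omega,n}$-diameter at most $\delta$: two points in the same atom share an atom of $\alpha$ at every iterate $0 \le j < n$, hence are within $\delta$ in $d^u$. Let $E \subset \overline{W^u(\omega, x, \delta_0)}$ be a maximal $(\omega, n, \delta)$-$W^u$-separated set; by maximality $E$ is $(\omega, n, \delta)$-spanning, so every $y_A$ lies within $d^u_{\omega,n}$-distance $\delta$ of some $e \in E$. Uniform continuity of $\phi(\omega, \cdot)$ on $M$ (with a modulus $\gamma(\omega, \delta)$ measurable in $\omega$) yields $S_n\phi(y_A) \le S_n\phi(e) + n\gamma(\omega, \delta)$, and a bounded-overlap estimate along $W^u$ (controlling how many distinct atoms can be routed to the same $e$) gives $\sum_A e^{S_n\phi(y_A)} \le M_n e^{n\gamma(\omega, \delta)} \sum_{e \in E} e^{S_n\phi(e)}$ with $\tfrac{1}{n}\log M_n \to 0$. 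Taking $n \to \infty$, integrating over $\omega$, and then letting $\delta \to 0$ (so $\int \gamma(\omega, \delta)\, d\mathbf{P} \to 0$ by dominated convergence) gives $h_\mu^u(\mathcal{F}) + \int \phi\, d\mu \le P^u(\mathcal{F}, \phi, \delta_0)$; Lemma \ref{lem:pressuremax} then identifies the right-hand side with $P^u(\mathcal{F}, \phi)$.

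The main obstacle is the multiplicity estimate at the end of the third paragraph. Unlike the deterministic setting, the bounded-overlap constant and the modulus of continuity depend measurably on $\omega$, and one must check that averaging and taking $n \to \infty$ can be interchanged. This is where the uniform expansion along $W^u$ from Definition \ref{domination}\,(ii) is crucial: together with Remark \ref{rmk:domination}\,(i), which makes $\tilde \lambda$ essentially constant, it provides the uniformity in $\omega$ needed to control the cardinality of $(\omega, n, \delta)$-separated sets and to turn the fiberwise bounds into the desired integrated inequality.
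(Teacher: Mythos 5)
Your approach is genuinely different from the paper's. You follow the classical Walters/Misiurewicz partition-based route: apply Lemma \ref{lem:wellknown} to the conditional masses $\mu^\eta_{(\omega,x)}(A)$ of the atoms $A$ of $\alpha_0^{n-1}$ restricted to an unstable plaque, then pigeonhole representative points $y_A$ onto a maximal $(\omega,n,\delta)$-separated set. The paper instead uses the local Bowen-ball characterisation of $h^u_\mu$ (Lemma \ref{lem:low=up} together with the Katok-type formula implicit in item (ii) of its generic-point choice), and applies Lemma \ref{lem:wellknown} with $p_i=\mu^\xi_{(\omega,x)}\bigl(V^u(\mathcal{F},\omega,y(z),n,\epsilon)\bigr)$ indexed directly by a spanning set. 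Because the $p_i$ there only need to have total mass at least $1-\rho$, no counting of atoms is required at all.

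This difference is not cosmetic: the multiplicity step in your third paragraph is a genuine gap, and it is exactly the difficulty the paper's approach is built to avoid. You assert that at most $M_n$ atoms of $(\alpha_0^{n-1})_\omega$ can be routed to a fixed $e\in E$, with $\tfrac{1}{n}\log M_n\to 0$, but for a partition whose atoms merely have small diameter this is false. An atom of $(\alpha_0^{n-1})_\omega$ meeting $V^u(\mathcal{F},\omega,e,n,2\delta)$ is determined by a sequence of atoms of $\alpha_{\theta^j\omega}$, $0\le j\le n-1$, each of which must meet a $4\delta$-ball around $f^j_\omega(e)$; the number of admissible choices at step $j$ is bounded only by some $L(\theta^j\omega)\ge 2$, and the product grows like $\exp\bigl(n\int\log L\,d\mathbf{P}\bigr)$, i.e. exponentially in $n$. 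The uniform expansion along $W^u$ does not rescue this: it makes the $d^u_{\omega,n}$-Bowen ball geometrically small, but atoms of $\alpha$ of diameter $<\delta$ can be arbitrarily thin, so their intersections with a small Bowen ball are not controlled in number. Even the classical Walters argument must confront this issue: there one replaces $\alpha$ by carefully constructed compact sets $B_1,\dots,B_k$ separated by a positive distance $b$ together with a remainder $B_0$, so that a $\delta$-ball ($2\delta<b$) meets at most one $B_j$ with $j\ge 1$, giving multiplicity $2^n$, and then a power trick (passing to $\Theta^m$) is needed to kill the residual $\log 2$. Neither device appears in your proposal, and without one of them (or the paper's Bowen-ball route) the chain of estimates does not close.
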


\begin{pf}
 Let $\mu=\int_{\mathcal{M}^e_{\mathbf{P}}(\mathcal{F})}\nu d\tau(\nu)$ be the unique ergodic decomposition
where $\mathcal{M}^e_{\mathbf{P}}(\mathcal{F})$ is the set of ergodic measures in $\mathcal{M}_{\mathbf{P}}(\mathcal{F})$ and $\tau$ is a Borel probability measure such that $\tau(\mathcal{M}^e_{\mathbf{P}}(\mathcal{F}))=1$.
Since $\mu \mapsto h_\mu^u(\mathcal{F})$ is affine and upper semi-continuous by Lemma \ref{lem:affine} and \ref{lem:semiconti}, then so is $\mu \mapsto h_\mu^u(\mathcal{F})+\int_{\Omega\times M}\phi \mathrm{d}{\mu}$ and hence
\begin{equation*}\label{e:ergodicdecom}
h_\mu^u(\mathcal{F})+\int_{\Omega\times M}\phi \mathrm{d}{\mu}=\int_{\mathcal{M}^e_{\mathbf{P}}(\mathcal{F})}\Big(h_\nu^u(\mathcal{F})+\int_{\Omega\times M}\phi \mathrm{d}{\nu}\Big) d\tau(\nu)
\end{equation*}
by a classical result in convex analysis (cf. Fact A.2.10 on p. 356 in \cite{Downarowicz2011}). So we only need to prove the proposition for
ergodic measures.

 We assume $\mu$ is ergodic. Let $\xi\in \mathcal{Q}^u(\Omega\times M)$, that is, a measurable partition of $\Omega\times M$ subordinate to $W^u$-foliation as in Proposition \ref{prop:specialpartition}. Then we can pick $(\omega,x)\in\Omega\times M$ satisfying
  \begin{enumerate}[label=(\roman*)]
    \item ${\mu}_{(\omega,x)}^\xi(\xi_\omega(x))=1$;
    \item\label{prp:item2} there exists $B\subset\xi_\omega(x)$ such that
    \begin{enumerate}
      \item ${\mu}_{(\omega,x)}^\xi(B)=1$,
      \item $h_\mu(\mathcal{F},\omega,\xi)=h_\mu(\mathcal{F},\omega,y,\xi)= \lim_{n\to\infty}-\frac{1}{n}\log{\mu}^\xi_{(\omega,y)}(V^u(\mathcal{F},\omega,y,n,\epsilon))$ for any $y\in B$ and $\epsilon>0$, according to Lemma \ref{lem:low=up},
      \item $\lim_{n\to\infty}\frac{1}{n}(S_n\phi)(\omega,y)=\int_{\Omega\times M}\phi \mathrm{d}{\mu}$ for any $y\in B$, which can be obtained by using the Birkhoff ergodic theorem on $(\Omega\times M,\Theta)$.
    \end{enumerate}
       \end{enumerate}

  Fix $\rho>0$. By property \ref{prp:item2} we know that for any $y\in B$, there exists $N(y)=N(y,\epsilon)>0$ such that if $n\geq N(y)$ then we have
  \[
    {\mu}^\xi_{(\omega,y)}(V^u(\mathcal{F},\omega,y,n,\epsilon))\leq e^{-n(h_\mu(\mathcal{F},\omega,\xi)-\rho)}
  \]
  and
  \begin{equation}\label{eq:est2}
    \frac{1}{n}(S_n\phi)(\omega,y)\geq\int_{\Omega\times M}\phi \mathrm{d}{\mu}-\rho.
  \end{equation}
  Denote $B_n=\{y\in B\colon N(y)\leq n\}$. Then $B=\bigcup_{n=1}^\infty B_n$. So we can choose $n>0$ such that ${\mu}^\xi_{(\omega,x)}(B_n)>{\mu}^\xi_{(\omega,x)}(B)-\rho=1-\rho$. If $y\in B_n\subset\xi_\omega(x)$, then ${\mu}^\xi_{(\omega,y)}={\mu}^\xi_{(\omega,x)}$. So for any $y\in B_n$ we have
  \begin{equation}\label{eq:est3}
    {\mu}^\xi_{(\omega,x)}(V^u(\mathcal{F},\omega,y,n,\epsilon))\leq e^{-n(h_\mu(\mathcal{F},\omega,\xi)-\rho)}.
  \end{equation}
  Now we can choose $\delta>0$ such that $W^u(\omega,x,\delta)\supset\xi_\omega(x)$. Let $F$ be an {$(\omega, n,\epsilon/2)$ $W^u$-spanning set} of $\overline{W^u(\omega,x,\delta)}\cap B_n$ satisfying
  \[
    \overline{W^u(\omega,x,\delta)}\cap B_n\subset\bigcup_{z\in F}V^u(\mathcal{F},\omega,z,n,\epsilon/2),
  \]
  and $V^u(\mathcal{F},\omega,z,n,\epsilon/2)\cap B_n\neq\emptyset$ for any $z\in F$. Then choose an arbitrary point in $V^u(\mathcal{F},\omega,z,n,\epsilon/2)\cap B_n$, which is denoted by $y(z)$. Then we have
  \begin{align}\label{eq:est3.5}
    1-\rho &< {\mu}^\xi_{(\omega,x)}(\overline{W^u(\omega,x,\delta)}\cap B_n)\notag\\
    &\leq {\mu}^\xi_{(\omega,x)}(\bigcup_{z\in F}V^u(\mathcal{F},\omega,z,n,\epsilon/2)) \notag\\
    &\leq \sum_{z\in F}{\mu}^\xi_{(\omega,x)}(V^u(\mathcal{F},\omega,z,n,\epsilon/2)) \notag\\
    & \leq\sum_{z\in F}{\mu}^\xi_{(\omega,x)}(V^u(\mathcal{F},\omega,y(z),n,\epsilon)).
  \end{align}
  Using \eqref{eq:est2}, \eqref{eq:est3} and Lemma \ref{lem:wellknown} with
  \[
    p_i={\mu}^\xi_{(\omega,x)}(V^u(\mathcal{F},\omega,y(z),n,\epsilon))\text{ and }a_i=(S_n\phi)(\omega,y(z)),
  \]
  we have
  \begin{align*}
    &\sum_{z\in F}{\mu}^\xi_{(\omega,x)}(V^u(\mathcal{F},\omega,y(z),n,\epsilon))\left(n\left(\int_{\Omega\times M}\phi \mathrm{d}{\mu}-\rho\right)+n(h_\mu(\mathcal{F},\omega,\xi)-\rho)\right)\notag\\
\leq &\sum_{z\in F}{\mu}^\xi_{(\omega,x)}(V^u(\mathcal{F},\omega,y(z),n,\epsilon))\Big((S_n\phi)(y(z))-\log {\mu}^\xi_{(\omega,x)}(V^u(\mathcal{F},\omega,y(z),n,\epsilon))\Big)\notag\\
\leq &\left(\sum_{z\in F}{\mu}^\xi_{(\omega,x)}(V^u(\mathcal{F},\omega,y(z),n,\epsilon))\right)\left(\log \sum_{z\in F}\exp((S_n\phi)(y(z)))-\right. \\
\qquad& \log \left.\sum_{z\in F}{\mu}^\xi_{(\omega,x)}(V^u(\mathcal{F},\omega,y(z),n,\epsilon))\right).
  \end{align*}
  Combining  \eqref{eq:est3.5},
  \begin{align}\label{eq:est4}
    & n\Big(\int_{\Omega\times M}\phi \mathrm{d}{\mu}-\rho\Big)+n(h_\mu(\mathcal{F},\omega,\xi)-\rho) \notag\\
    \leq &\log \sum_{z\in F}\exp((S_n\varphi)(y(z)))-\log \sum_{z\in F}{\mu}^\xi_{(\omega,x)}(V^u(\mathcal{F},\omega,y(z),n,\epsilon))\notag\\
    \leq & \log\sum_{z\in F}\exp((S_n\phi)(\omega,y(z)))-\log(1-\rho).
  \end{align}
  Let ${\Delta_{\omega,\epsilon}:=\sup\{|\phi(\omega,x)-\phi(\omega,y)|\colon d(x,y)\leq\epsilon\}}$. For any $z\in F$, we have
  \[
    \exp((S_n\phi)(\omega,y(z)))\leq\exp((S_n\phi)(\omega,z)+n\Delta_{\omega,\epsilon}).
  \]
  Dividing by $n$ and taking the $\limsup$ on both side of \eqref{eq:est4}, we have
  \[
    \int_{\Omega\times M}\phi \mathrm{d}{\mu}+h_\mu(\mathcal{F},\omega,\xi)-2\rho\leq\limsup_{n\to\infty}\frac{1}{n}\log\sum_{z\in F}\exp((S_n\phi)(\omega,z))+\Delta_{\omega,\epsilon}.
  \]
  We can choose a sequence $\{F_n\}$ of such $F$ such that
  \[
    \limsup_{n\to\infty}\frac{1}{n}\log\sum_{z\in F_n}\exp((S_n\phi)(\omega,z))\leq {P^u(\mathcal{F},\phi,\omega,\delta)}.
  \]
    Since $\rho$ is arbitrary, and $\Delta_{\omega,\epsilon}\to 0$ as $\epsilon\to0$, we have
  \[
     \int_{\Omega\times M}\phi \mathrm{d}{\mu}+h_\mu(\mathcal{F},\omega,\xi)\leq P^u(\mathcal{F},\phi,\omega,\delta).
  \]
  Integrating with respect to $\omega$ gives what we need.
\end{pf}

\begin{potC}
  By Proposition \ref{prop:vpneq}, we only need to prove that for any $\rho>0$, there exists $\mu\in\mathcal{M}_{\mathbf{P}}(\mathcal{F})$ such that $h^u_\mu(\mathcal{F})+\int_{\Omega\times M}\phi \mathrm{d}{\mu}\geq P^u(\mathcal{F},\phi)-\rho$.

  Given $\delta>0$, by Lemma \ref{lem:generic} and Birkhoff Ergodic Theorem, we can choose $\omega_0\in\Omega$ such that
  \[
    P^u(\mathcal{F},\phi,\delta)=P^u(\mathcal{F},\phi,\omega_0,\delta),
  \]
  and
  \[
    \lim_{n\to \infty}\frac{1}{n}\sum_{u=0}^{n-1}K(\theta^n\omega_0)=\int_{\Omega}K(\omega)d\mathbf{P}(\omega).
  \]
  Then we can choose $x_0\in M$ such that
  \[
    P^u(\mathcal{F},\phi,\omega_0,x_0,\delta)\geq P^u(\mathcal{F},\phi,\omega_0,\delta)-\rho.
  \]
Take $\epsilon>0$ small enough. Then let $E_{\omega_0,n}$ be an $(\omega_0,n,\epsilon)$ $W^u$-separated set of $\overline{W^u(\omega_0,x_0,\delta)}$ such that
  \[
    \log\sum_{y\in E_{\omega_0,n}}\exp((S_n\phi)(\omega_0,y))\geq\log  P^u(\mathcal{F},\phi,\omega_0,x_0,\delta,n,\epsilon)-1.
  \]
  Then we construct measures ${\nu}_{n}$ with support $\{\omega_0\}\times M$ such that
  \[
    \mathrm{d}{\nu}_{n}(\omega_0,x)=\mathrm{d}\nu^{(n)}_{\omega_0}(x)\mathrm{d}\delta_{\omega_0}(\omega),
  \]
  where
  \[
    \nu^{(n)}_{\omega_0}:=\frac{\sum_{y\in E_{\omega_0,n}}\exp((S_n\phi)(\omega_0,y))\delta_y}{\sum_{z\in E_{\omega_0,n}}\exp((S_n\phi)(\omega_0,z))}
  \]
  and $\delta_\cdot$ denotes a Dirac measure.
  Let
  \[
    {\mu}_{n}=\frac{1}{n}\sum_{i=0}^{n-1}\Theta^i{\nu}_{n}.
  \]
  Then by Lemma 2.1 in \cite{Kifer2001}, there exists a subsequence $\{n_i\}$ such that
  \[
    \lim_{i\to\infty}{\mu}_{n_i}={\mu}.
  \]
  It is easy to check that $\mu\in\mathcal{M}_{\mathbf{P}}(\mathcal{F})$.

  We can choose a partition $\eta\in\mathcal{P}^u(\Omega\times M)$ such that $\overline{W^u(\omega_0,x_0,\delta)}\subset \eta_{\omega_0}(x_0)$ (by shrinking $\delta$ if necessary). That is, $\overline{W^u(\omega_0,x_0,\delta)}$ is contained in a single element of $\eta_{\omega_0}$. Then choose a fiberwise finite partition $\alpha$ of $\Omega\times M$ with sufficiently small diameter such that ${\mu}_\omega(\partial \alpha_\omega)=0$ for $\mathbf{P}$-a.e. $\omega$. {Let $\alpha^u$ denote the corresponding measurable partition in $\mathcal{P}^u(\Omega\times M)$ constructed via $\alpha$.}

  Fix $q$, $n\in\mathbb{N}$ with $1<q\leq n-1$. Put $a(j)=\left[\frac{n-j}{q}\right]$, $j=0,1,\cdots,q-1$, where we denote by $[a]$ the integer part of $a$. Then
    \[\bigvee_{u=0}^{n-1}\Theta^{-i}\alpha=\bigvee_{r=0}^{a(j)-1}\Theta^{-(rq+j)}\alpha_0^{q-1}\vee\bigvee_{t\in T_j}\Theta^{-t}\alpha,
  \]
  where $T_j=\{0,1,\cdots,j-1\}\cup\{j+aq(j),\cdots,n-1\}$. Note that $\mathrm{Card\ } T_j\leq 2q$. {For $\mathbf{P}$-a.e. $\omega\in\Omega$, suppose that $\alpha_\omega$ contains $K(\omega)$ elements, moreover, we require that $\mathrm{diam}(\alpha_\omega)\ll\epsilon$.} Then
  \begin{align*}
      & \log\sum_{y\in E_{\omega_0,n}}\exp((S_n\phi)(\omega_0,y)) \\
     =& \sum_{y\in E_{\omega_0,n}}\nu^{(n)}_{\omega_0}(\{y\})\Big(-\log\nu^{(n)}_{\omega_0}(\{y\})+(S_n\phi)(\omega_0,y)\Big) \\
     =& H_{\nu_n}(\alpha^{n-1}_0|\eta)+\int_{\Omega\times M}(S_n\phi)\mathrm{d}{\nu}_n.
  \end{align*}
 Then following the same calculation in \cite{HuWuZhu2017}, we have that
  \begin{align*}
         & \log\sum_{y\in E_{\omega_0,n}}\exp((S_n\phi)(\omega_0,y)) \\
    \leq & \sum_{t\in T_j}H_{{\nu}_n}(\Theta^{-t}\alpha|\eta)+H_{\Theta^j{\nu}_n}(\alpha^{q-1}_0|\Theta^j\eta) \\
    +    &\sum_{r=1}^{a(j)-1}H_{\Theta^{rq+j}{\nu}_n}(\alpha^{q-1}_0|\Theta\alpha^u)+\int_{\Omega\times M}(S_n\phi)\mathrm{d}{\nu}_n\\
     %\leq & \sum_{t\in S_j}\int_\Omega H_{({\nu}}_n)_\omega}(\Theta^{-t}\alpha|\eta)d\mathbf{P}(\omega)+H_{\Theta^j{\nu}}_n}(\alpha^{q-1}_0|\Theta^j\eta) \\
    %+    &\sum_{r=1}^{a(j)-1}H_{\Theta^{rq+j}{\nu}}_n}(\alpha^{q-1}_0|\Theta\alpha^u)+\int_{\Omega\times M}(S_n\phi)\mathrm{d}{\nu}}_n\\
    %\leq & \sum_{t\in S_j}\int_{\Omega}\log K(\omega)d\mathbf{P}(\omega)+H_{\Theta^j{\nu}}_n}(\alpha^{q-1}_0|\Theta^j\eta) \\
    %+    &\sum_{r=1}^{a(j)-1}H_{\Theta^{rq+j}{\nu}}_n}(\alpha^{q-1}_0|\Theta\alpha^u)+\int_{\Omega\times M}(S_n\phi)\mathrm{d}{\nu}}_n\\
    \leq & 2q\log K_n(\omega_0)+H_{\Theta^j{\nu}_n}(\alpha^{q-1}_0|\Theta^j\eta) \\
    +    &\sum_{r=1}^{a(j)-1}H_{\Theta^{rq+j}{\nu}_n}(\alpha^{q-1}_0|\Theta\alpha^u)+\int_{\Omega\times M}(S_n\phi)\mathrm{d}{\nu}_n
  \end{align*}
where $K_n(\omega_0):=\max_{t\in T_j}K(\theta^t\omega)$. We claim that $\lim_{n\to \infty}\frac{1}{n}\log K_n(\omega_0)=0$. Indeed, by the choice of $\omega_0$, we know that
  $$\lim_{n\to \infty}\frac{1}{n}\sum_{u=0}^{n-1}K(\theta^n\omega_0)=\int_{\Omega}K(\omega)d\mathbf{P}(\omega)<\infty$$
  as $\alpha$ is fiberwise finite. So $\lim_{n\to \infty}\frac{1}{n}K(\theta^n\omega_0)=0$, from which the claim follows easily.

 Summing the inequality above over $j$ from $0$ to $q-1$ and dividing by $n$, by Lemma \ref{lem:affine} we have
  \begin{align}\label{eq:keyest}
         & \frac{q}{n}\log\sum_{y\in E_{\omega_0,n}}\exp((S_n\phi)(\omega_0,y)) \notag \\
    \leq & \frac{2q^2}{n}\log K_n(\omega_0)+\frac{1}{n}\sum_{j=0}^{q-1}H_{\Theta^j{\nu}_n}(\alpha^{q-1}_0|\Theta^j\eta) \notag  \\
    +    & \frac{1}{n}\sum_{k=0}^{n-1}H_{\Theta^k{\nu}_n}(\alpha^{q-1}_0|\Theta\alpha^u)+\frac{q}{n}\int_{\Omega\times M}(S_n\phi)\mathrm{d}{\nu}_n \notag  \\
    \leq & \frac{2q^2}{n}\log K_n(\omega_0)+\frac{1}{n}\sum_{j=0}^{q-1}H_{\Theta^j{\nu}_n}(\alpha^{q-1}_0|\Theta^j\eta) \notag  \\
    +    & H_{{\mu}_n}(\alpha^{q-1}_0|\Theta\alpha^u)+q\int_{\Omega\times M}\phi \mathrm{d}{\mu}_n.
  \end{align}
  Then we can choose a sequence $\{n_k\}$ such that
  \begin{enumerate}[label=(\roman*)]
    \item ${\mu}_{n_k}\to{\mu}$ as $k\to\infty$;
    \item the following equality holds
    \begin{align*}
     & \lim_{k\to\infty}\frac{1}{n_k}\log P^u(\mathcal{F},\phi,\omega_0,x_0,\delta,n_k,\epsilon) \\
    =& \limsup_{n\to\infty}\frac{1}{n}\log P^u(\mathcal{F},\phi,\omega_0,x_0,\delta,n,\epsilon);
    \end{align*}
    \item ${\nu}_{n_k}\to{\nu}$ as $k\to\infty$ for some measure on $\Omega\times M$.
  \end{enumerate}
  Since ${\mu}_\omega(\partial \alpha_\omega)=0$ for $\mathbf{P}$-a.e. $\omega$, by Lemma \ref{lem:semiconti},
  \[
    \limsup_{k\to\infty}H_{{\mu}_{n_k}}(\alpha_0^{q-1}|\Theta\alpha^u)\leq H_{{\mu}}(\alpha_0^{q-1}|\Theta\alpha^u).
  \]
  As $\tilde\nu_{n}$ is supported on $\{\omega_0\}\times\overline{W^u(\omega_0,x_0,\delta)}$, for each $j=0,\cdots,q-1$, we can choose $\alpha, \beta_n \in \mathcal{P}(\Omega \times M)$ such that $\beta_1<\beta_2<\cdots<\beta_n<\cdots$ and {$\mathscr{B}(\beta_n)\nearrow\mathscr{B}(\Theta^j\eta)$}, and moreover, $(\Theta^j{\nu})_\omega(\partial(\alpha_0^{q-1})_\omega)=0$, $(\Theta^j{\nu})_\omega((\partial(\beta_n)_0^{q-1})_\omega)=0$ for $\mathbf{P}$-a.e. $\omega\in\Omega$. Then applying Lemma \ref{lem:semiconti} we have
  \[
    \limsup_{k\to\infty}\frac{1}{n_k}\sum_{j=0}^{q-1}H_{\Theta^j{\nu}_{n_k}}(\alpha_0^{q-1}|\Theta^j\eta)\leq \limsup_{k\to\infty}\frac{1}{n_k}\sum_{j=0}^{q-1}H_{\Theta^j{\nu}}(\alpha_0^{q-1}|\Theta^j\eta)=0.
  \]
  Thus replacing $n$ by $n_k$ in \eqref{eq:keyest} and letting $k\to\infty$, by the above claim and discussions, we get
  \begin{align*}
        & q\limsup_{n\to\infty}\frac{1}{n}\log P^u(\mathcal{F},\phi,\omega_0,x_0,\delta,n,\epsilon) \\
    \leq& H_{{\mu}}(\alpha^{q-1}_0|\Theta\alpha^u)+q\int_{\Omega\times M}(S_n\phi)\mathrm{d}{\mu}.
  \end{align*}
  By Theorem \ref{thm:localvsfinite},
  \begin{align*}
        & \limsup_{n\to\infty}\frac{1}{n}\log P^u(\mathcal{F},\phi,\omega_0,x_0,\delta,n,\epsilon) \\
    \leq& \lim_{q\to\infty}\frac{1}{q}H_{{\mu}}(\alpha^{q-1}_0|\Theta\alpha^u)+\int_{\Omega\times M}(S_n\phi)\mathrm{d}{\mu} \\
       =   & h^u_{\mu}(\mathcal{F})+\int_{\Omega\times M}(S_n\phi)\mathrm{d}{\mu}.
  \end{align*}
  {Letting} $\epsilon\to0$, we have $P^u(\mathcal{F},\phi,\omega_0,x_0,\delta)\leq  h^u_{\mu}(\mathcal{F})+\int_{\Omega\times M}(S_n\phi)\mathrm{d}{\mu}$. Recall that $P^u(\mathcal{F},\phi)=P^u(\mathcal{F},\phi,\delta)=P^u(\mathcal{F},\phi,\omega_0, \delta)\leq P^u(\mathcal{F},\phi,\omega_0,x_0,\delta)+\rho$. The proof of Theorem \ref{thm:vp} is complete.
\end{potC}

\section{\texorpdfstring{$u$}{u}-Equilibrium States}\label{sec:equilibrium}
In this section, as an application of Theorem \ref{thm:vp}, we consider the $u$-equilibrium states, which is given in Definition \ref{def:uequi}.

By Lemma \ref{lem:semiconti}, we know that $\mu\mapsto h_{\mu}^u(\mathcal{F})$ is upper semi-continuous at $\mu_0\in\mathcal{M}_{\mathbf{P}}(\mathcal{F})$, then by the similar technique to the proof of Theorem 3.1.6 in \cite{Bogenschuetz1993}, we can prove a dual proposition of Theorem \ref{thm:vp}.
\begin{proposition}\label{prop:dualvp}
  If $h^u_{\text{top}}(\mathcal{F})<\infty$ and $\mu_0\in\mathcal{M}_{\mathbf{P}}(\mathcal{F})$, then
  \[
    h_{\mu_0}^u(\mathcal{F})=\inf_{\phi\in L^1(\Omega, C(M))}\left\{P^u(\mathcal{\mathcal{F},\phi})-\int_{\Omega\times M}\phi\mathrm{d}\tilde{\mu_0}\right\}.
  \]
\end{proposition}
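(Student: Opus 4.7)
The plan is to establish the two inequalities separately. The easy direction $\leq$ is immediate from Theorem \ref{thm:vp}: for any $\phi \in L^1(\Omega, C(M))$ we have $h_{\mu_0}^u(\mathcal{F}) + \int_{\Omega \times M} \phi\, d\mu_0 \leq P^u(\mathcal{F}, \phi)$, hence $h_{\mu_0}^u(\mathcal{F}) \leq P^u(\mathcal{F}, \phi) - \int \phi\, d\mu_0$; taking the infimum over $\phi$ gives $h_{\mu_0}^u(\mathcal{F}) \leq \inf_\phi \{P^u(\mathcal{F}, \phi) - \int \phi\, d\mu_0\}$.

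For the reverse inequality, I would follow the Hahn-Banach argument in the spirit of Theorem 3.1.6 in \cite{Bogenschuetz1993}. Endow $\mathcal{M}_\mathbf{P}(\mathcal{F})$ with the narrow topology induced by pairings $\mu \mapsto \int \phi\, d\mu$ for $\phi \in L^1(\Omega, C(M))$. By Lemma \ref{lem:affine} the map $\mu \mapsto h_\mu^u(\mathcal{F})$ is affine, and by Lemma \ref{lem:semiconti} it is upper semi-continuous; the hypothesis $h_{\text{top}}^u(\mathcal{F}) < \infty$ together with Corollary \ref{coro:vp1} ensures this map is finite-valued. Consequently, the hypograph
\[
\mathcal{H} := \{(\mu, t) \in \mathcal{M}_\mathbf{P}(\mathcal{F}) \times \mathbb{R} : t \leq h_\mu^u(\mathcal{F})\}
\]
is convex and closed in the product topology. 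Given $\epsilon > 0$, the point $(\mu_0, h_{\mu_0}^u(\mathcal{F}) + \epsilon)$ lies outside $\mathcal{H}$, so the Hahn-Banach separation theorem produces $\phi \in L^1(\Omega, C(M))$, $c \in \mathbb{R}$ and $\alpha \in \mathbb{R}$ with $\int \phi\, d\mu + ct \leq \alpha < \int \phi\, d\mu_0 + c(h_{\mu_0}^u(\mathcal{F}) + \epsilon)$ for every $(\mu, t) \in \mathcal{H}$. Letting $t \to -\infty$ forces $c \geq 0$; testing with $\mu = \mu_0$ rules out $c = 0$, so after normalizing $c = 1$ and setting $t = h_\mu^u(\mathcal{F})$, Theorem \ref{thm:vp} yields $P^u(\mathcal{F}, \phi) = \sup_\mu\{h_\mu^u(\mathcal{F}) + \int \phi\, d\mu\} \leq \alpha$, so
\[
P^u(\mathcal{F}, \phi) - \int \phi\, d\mu_0 \leq \alpha - \int \phi\, d\mu_0 < h_{\mu_0}^u(\mathcal{F}) + \epsilon,
\]
and arbitrariness of $\epsilon$ closes the gap.

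The main obstacle I anticipate is making the Hahn-Banach step rigorous in this random context. Specifically, one needs to embed $\mathcal{M}_\mathbf{P}(\mathcal{F})$ into a suitable locally convex topological vector space---the natural candidate being the weak-$\ast$ dual of $L^1(\Omega, C(M))$---so that continuous linear functionals in the product topology are represented by integration against elements of $L^1(\Omega, C(M))$, as opposed to more exotic finitely additive objects. The finiteness hypothesis $h_{\text{top}}^u(\mathcal{F}) < \infty$, together with the Lipschitz and convexity properties of $P^u$ listed at the end of Section \ref{sec:Pressure}, ensures that $P^u(\mathcal{F}, \phi)$ is finite for every $\phi \in L^1(\Omega, C(M))$, which is precisely the integrability input that the Bogensch\"utz framework requires. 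With these identifications in place, the remaining verifications (measurability of $\omega \mapsto \int \phi\, d\mu_\omega$ in the separating functional, and closedness of $\mathcal{H}$) are routine.
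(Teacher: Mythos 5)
Your proposal is correct and takes essentially the same route the paper indicates: the paper gives no proof of this proposition beyond the one-line remark that it follows ``by the similar technique to the proof of Theorem 3.1.6 in \cite{Bogenschuetz1993},'' using upper semi-continuity of $\mu\mapsto h_\mu^u(\mathcal{F})$ from Lemma~\ref{lem:semiconti} together with affineness from Lemma~\ref{lem:affine}. You have simply made explicit the Hahn--Banach separation of the point $(\mu_0,h_{\mu_0}^u(\mathcal{F})+\epsilon)$ from the closed convex hypograph and the normalization $c=1$, which is exactly what the Bogensch\"utz argument does, and you correctly flag the one genuine technical care point---that the separating functional be realized as integration against an element of $L^1(\Omega,C(M))$ rather than a more exotic object, for which the weak-$\ast$ embedding you describe is the standard remedy. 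No gap; your caveats about closedness of $\mathcal{H}$ and finiteness of $P^u(\mathcal{F},\cdot)$ (the latter following from $h^u_{\text{top}}(\mathcal{F})<\infty$ and the variational principle together with $|\int\phi\,\mathrm{d}\mu|\leq\|\phi\|$) are appropriate and are indeed what the cited reference supplies.
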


Consider $\phi\in L^1(\Omega, C(M))$.
\begin{definition}\label{def:uequi}
  $\mu\in\mathcal{M}_{\mathbf{P}}(\mathcal{F})$ is said to be a $u$-equilibrium state for $\phi$, if it satisfies
  \[
  h^u_\mu(\mathcal{F})+\int_{\Omega\times M}\phi\mathrm{d}{\mu}=P^u(\mathcal{F},\phi).
  \]
\end{definition}
We denote by $\mathcal{M}^u(\mathcal{F},\phi)$ the set of all $u$-equilibrium states for $\phi$. By the completely parallel treatment in \cite{ZhuLiLi2009}, we have the following proposition.
\begin{proposition}\label{prop:equilibrium}
  Let $\phi\in L^1(\Omega, C(M))$, then we have the following properties on $u$-equilibrium states.
  \begin{enumerate}[label=(\roman*)]
    \item\label{equi1} $\mathcal{M}^u(\mathcal{F},\phi)$ is non-empty, and it is convex;
    \item the extreme points of $\mathcal{M}^u(\mathcal{F},\phi)$ are precisely ergodic members of $\mathcal{M}^u(\mathcal{F},\phi)$;
    \item $\mathcal{M}^u(\mathcal{F},\phi)$ is compact and has an {ergodic $u$-equilibrium state};
    \item assume $\phi$, $\psi\in L^1(\Omega,C(M))$ are cohomologous, i.e. $\phi=\psi+\sigma-\sigma\circ\Theta-c$ for some $c\in L^1(\Omega,\mathbf{P})$ and $\sigma\in L^1(\Omega, C(M))$. Then $\phi$ and $\psi$ have the same $u$-equilibrium states, and
        \[
          P^u(\mathcal{F},\phi)=P^u(\mathcal{F},\psi)-\int_{\Omega} c\mathrm{d}\mathbf{P}(\omega).
        \]
  \end{enumerate}
\end{proposition}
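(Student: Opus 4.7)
The plan is to exploit Theorem \ref{thm:vp} together with the affinity (Lemma \ref{lem:affine}) and upper semi-continuity (Lemma \ref{lem:semiconti}) of $\mu \mapsto h^u_\mu(\mathcal{F})$, following the standard equilibrium-state template. For part \ref{equi1}, the function $F(\mu):= h^u_\mu(\mathcal{F}) + \int_{\Omega\times M}\phi\, d\mu$ is upper semi-continuous on the weak$^*$-compact set $\mathcal{M}_{\mathbf{P}}(\mathcal{F})$ (the integral term is weak$^*$-continuous for $\phi\in L^1(\Omega,C(M))$ by the usual approximation by continuous fibered functions, together with the common marginal $\mathbf{P}$ on $\Omega$), so $F$ attains its supremum; by Theorem \ref{thm:vp} this supremum equals $P^u(\mathcal{F},\phi)$, giving non-emptiness. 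Convexity is immediate from the affinity of $F$.

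For part (iii), the same upper semi-continuity shows $\mathcal{M}^u(\mathcal{F},\phi)=\{\mu:F(\mu)\ge P^u(\mathcal{F},\phi)\}$ is a weak$^*$-closed subset of $\mathcal{M}_{\mathbf{P}}(\mathcal{F})$, hence compact. To produce an ergodic element I would use the ergodic decomposition $\mu=\int_{\mathcal{M}^e_{\mathbf{P}}(\mathcal{F})}\nu\, d\tau(\nu)$ of any $\mu\in\mathcal{M}^u(\mathcal{F},\phi)$. As in the proof of Proposition \ref{prop:vpneq} (invoking the convex-analysis fact that an affine, upper semi-continuous function commutes with barycenters), one obtains
\[
F(\mu)=\int_{\mathcal{M}^e_{\mathbf{P}}(\mathcal{F})}F(\nu)\,d\tau(\nu).
\]
Since $F(\nu)\le P^u(\mathcal{F},\phi)$ for every $\nu$ and the integral equals $P^u(\mathcal{F},\phi)$, the set $\mathcal{M}^u(\mathcal{F},\phi)$ has full $\tau$-measure, producing an ergodic equilibrium state.

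For part (ii), the same decomposition shows that if $\mu\in\mathcal{M}^u(\mathcal{F},\phi)$ is not ergodic then it is a non-trivial convex combination of members of $\mathcal{M}^u(\mathcal{F},\phi)$, hence not extreme. Conversely, an ergodic element of $\mathcal{M}^u(\mathcal{F},\phi)$ is extreme already in the bigger simplex $\mathcal{M}_{\mathbf{P}}(\mathcal{F})$ and therefore extreme in any convex subset it belongs to.

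For part (iv), if $\phi=\psi+\sigma-\sigma\circ\Theta-c$, then for every $\mu\in\mathcal{M}_{\mathbf{P}}(\mathcal{F})$, using $\Theta$-invariance of $\mu$ and that $\mu$ has marginal $\mathbf{P}$ on $\Omega$,
\[
\int_{\Omega\times M}(\phi-\psi)\,d\mu=\int\sigma\,d\mu-\int\sigma\circ\Theta\,d\mu-\int_\Omega c\,d\mathbf{P}=-\int_\Omega c\,d\mathbf{P}.
\]
Thus $F_\phi(\mu)$ and $F_\psi(\mu)$ differ by the constant $-\int_\Omega c\,d\mathbf{P}$, which simultaneously gives $P^u(\mathcal{F},\phi)=P^u(\mathcal{F},\psi)-\int_\Omega c\, d\mathbf{P}$ and the coincidence of equilibrium states. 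The only step requiring real care is the affine integral formula for $F$ along the ergodic decomposition in (ii)–(iii); I would invoke exactly the same convex-analytic fact (Fact A.2.10 in \cite{Downarowicz2011}) that was used in Proposition \ref{prop:vpneq}, which is available precisely because Lemmas \ref{lem:affine} and \ref{lem:semiconti} furnish both affinity and upper semi-continuity of the unstable metric entropy.
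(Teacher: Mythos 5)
Your proposal is correct and follows precisely the standard equilibrium-state template that the paper implicitly relies on by citing \cite{ZhuLiLi2009} without reproducing the argument: affinity and upper semi-continuity of $h^u_\mu(\mathcal{F})$ (Lemmas \ref{lem:affine} and \ref{lem:semiconti}), the variational principle (Theorem \ref{thm:vp}), the ergodic decomposition together with the barycenter identity already used in Proposition \ref{prop:vpneq}, and the elementary cohomology computation. One small caveat worth stating explicitly: the compactness of $\mathcal{M}_{\mathbf{P}}(\mathcal{F})$ you invoke is compactness in the narrow topology induced by $L^1(\Omega,C(M))$ (the topology in which $\mu\mapsto\int\phi\,d\mu$ is continuous and in which Kifer's Lemma 2.1 yields convergent subsequences), not the ordinary weak$^*$ topology on measures of the generally non-compact space $\Omega\times M$.
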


\begin{remark}
  By \ref{equi1} in Proposition \ref{prop:equilibrium}, we know that for $\phi\equiv0$, $\mathcal{M}^u(\mathcal{F},0)$ is not empty, which means that the maximal unstable metric entropy always exists.
\end{remark}

There are some important potential functions interest people, among which, $\phi^u(\omega,x)=-\log\Vert\mathrm{det}Df_\omega|E^u(\omega,x)\Vert$ is crucial. Indeed, $\phi^u$ is closely related to the so-called Gibbs $u$-states as follows.
\begin{definition}\label{def:Gibbs}
  $\mu\in\mathcal{M}_{\mathbf{P}}(\mathcal{F})$ is called a \textit{Gibbs $u$-state} if $\mu^\xi_{(\omega,x)}\ll\lambda_{(\omega,x)}$, for $\mu$-a.e. $(\omega,x)\in\Omega\times M$, where $\xi$ is an increasing partition subordinate to $W^u$-foliation, $\{\mu^\xi_{(\omega,x)}\}$ is the corresponding canonical system of conditional measures and $\lambda_{(\omega,x)}$ is the Lebesgue measure on $W^u(\omega,x)$ induced by its inherited Riemannian structure as a submanifold.
\end{definition}

By results in \cite{BahnmullerLiu1998}, \cite{QianQianXie2003}, \cite{LedrappierYoung1988}, \cite{LedrappierYoung1988a}, for $\mu\in\mathcal{M}_{\mathbf{P}}(\mathcal{F})$, we have
\begin{equation}\label{eq:Gibbs}
  h^u_\mu(\mathcal{F})\leq\int_{\Omega\times M}-\phi^u\mathrm{d}\mu,
\end{equation}
and the equality holds if and only if $\mu$ is a Gibbs $u$-state.

As an application of Theorem \ref{thm:vp}, we give a characterization of Gibbs $u$-states as follows.
\begin{proposition}
  $\mu\in\mathcal{M}_{\mathbf{P}}(\mathcal{F})$ is a Gibbs $u$-state if and only if $\mu$ is a $u$-equilibrium state of $\phi^u$.
\end{proposition}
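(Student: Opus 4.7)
The plan is to combine the variational principle (Theorem \ref{thm:vp}) with the Ruelle-type inequality \eqref{eq:Gibbs} that is recalled just before the statement. Note first that the integrability assumption \eqref{eq:basic} ensures $\phi^u \in L^1(\Omega,C(M))$, so the variational principle applies to $\phi^u$.

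The strategy rests on showing that $P^u(\mathcal{F},\phi^u)=0$, after which the equivalence becomes immediate. By \eqref{eq:Gibbs}, for every $\mu\in\mathcal{M}_{\mathbf{P}}(\mathcal{F})$ one has
\[
h^u_\mu(\mathcal{F})+\int_{\Omega\times M}\phi^u\,\mathrm{d}\mu\leq 0,
\]
with equality exactly when $\mu$ is a Gibbs $u$-state. Taking the supremum and invoking Theorem \ref{thm:vp} gives $P^u(\mathcal{F},\phi^u)\leq 0$. For the reverse inequality, I would appeal to the existence of Gibbs $u$-states, which is guaranteed by the references \cite{BahnmullerLiu1998,LedrappierYoung1988,LedrappierYoung1988a,QianQianXie2003} cited in the excerpt: any such measure $\nu$ satisfies $h^u_\nu(\mathcal{F})+\int\phi^u\,\mathrm{d}\nu=0$, hence $P^u(\mathcal{F},\phi^u)\geq 0$ by Theorem \ref{thm:vp}.

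With $P^u(\mathcal{F},\phi^u)=0$ established, the equivalence follows at once: $\mu$ is a $u$-equilibrium state for $\phi^u$ iff $h^u_\mu(\mathcal{F})+\int\phi^u\,\mathrm{d}\mu=0$, iff equality holds in \eqref{eq:Gibbs}, iff $\mu$ is a Gibbs $u$-state.

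The main obstacle I anticipate is the reverse inequality $P^u(\mathcal{F},\phi^u)\geq 0$, which depends on having at least one Gibbs $u$-state. If one does not wish to quote existence from the literature, an alternative route is to combine Proposition \ref{prop:equilibrium}\ref{equi1} (which provides some $u$-equilibrium state $\mu_0$ for $\phi^u$) with \eqref{eq:Gibbs} to see that $P^u(\mathcal{F},\phi^u)=h^u_{\mu_0}(\mathcal{F})+\int\phi^u\mathrm{d}\mu_0\leq 0$, and then to argue separately that this common value is exactly $0$ by constructing a measure achieving equality in the Pesin-type formula (for instance, by pushing forward normalized Lebesgue measure along unstable plaques and taking a weak-$*$ limit). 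Either way, once $P^u(\mathcal{F},\phi^u)=0$ is secured, the characterization of Gibbs $u$-states as equilibrium states of $\phi^u$ is a direct consequence of the sharp form of the Ruelle inequality recorded in \eqref{eq:Gibbs}.
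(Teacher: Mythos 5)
Your proposal is correct and follows essentially the same route as the paper: combine the variational principle (Theorem \ref{thm:vp}) with the sharp Ruelle/Pesin inequality \eqref{eq:Gibbs} and observe that the equivalence reduces to the single equality $P^u(\mathcal{F},\phi^u)=0$. Where you improve on the paper's presentation is in isolating the $\geq 0$ direction as a genuine step: the paper's proof simply writes $P^u(\mathcal{F},\phi^u)=\sup_\mu\{h^u_\mu(\mathcal{F})+\int\phi^u\,\mathrm{d}\mu\}=0$ ``by Theorem \ref{thm:vp} and Definition \ref{def:Gibbs}'', which only yields $\leq 0$ from \eqref{eq:Gibbs}; the reverse inequality requires producing \emph{some} measure attaining equality. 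Your appeal to the known existence of Gibbs $u$-states from the cited literature is exactly what closes this gap, and it is the cleaner of your two suggested routes: the second one, invoking Proposition \ref{prop:equilibrium}\ref{equi1} first, still ultimately needs a Gibbs $u$-state to be built by hand, so the detour through a $u$-equilibrium state $\mu_0$ adds nothing. Worth noting: once you do quote existence from the references rather than from the paper's own Remark, you also avoid a mild circularity present in the paper, which derives existence of Gibbs $u$-states as a \emph{corollary} of this proposition while the proposition's proof implicitly uses $P^u(\mathcal{F},\phi^u)=0$, i.e.\ the very existence it is supposed to deliver.
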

\begin{proof}
  By Theorem \ref{thm:vp} and Definition \ref{def:Gibbs}, we have
  \[
    P^u(\mathcal{F},\phi^u)=\sup_{\mu\in\mathcal{M}_{\mathbf{P}}(\mathcal{F})}\left\{h^u_\mu(\mathcal{F})+\int_{\Omega\times M}\phi\mathrm{d}\mu\right\}=0,
  \]
  and $\mu$ is a Gibbs $u$-state if and only if $\mu$ is a $u$-equilibrium state of $\phi^u$.
\end{proof}

\begin{remark}
  \begin{enumerate}[fullwidth,label=(\roman*)]
    \item From the proof of above proposition, we obtain that $P^u(\mathcal{F},\phi^u)=0$.
    \item By Proposition \ref{prop:equilibrium} (i), we know that the Gibbs $u$-state always exists, which is a result of independent interest.
  \end{enumerate}
\end{remark}

\end{document}